\newtheorem{theorem}{Theorem}[section]
\newtheorem{lemma}{Lemma}[section]
\newtheorem{corollary}{Corollary}[section]
\newtheorem{proposition}{Proposition}[section]
\newtheorem{definition}{Definition}[section]
\newtheorem{remark}{Remark}[section]
\newtheorem{example}{Example}[section]
\newcounter{theor}
\newtheorem{theo}[theor]{Theorem}
\DeclareMathOperator{\inter}{int}
\def\spann{\mathop\mathrm{span}\nolimits}
\def\conv{\mathop\mathrm{conv}\nolimits}
\def\bd{\mathop\mathrm{bd}\nolimits}
\def\s{\mathbb{S}}
\def\K{\mathcal{K}}
\def\w{\mathcal{W}}
\def\L{\mathcal{L}}
\def\R{\mathbb{R}}
\def\Z{\mathbb{Z}}
\def\V{\mathrm{V}}
\def\G{\mathrm{G}}
\def\I{\mathrm{I}}
\def\SL{\mathrm{SL}}
\def\vol{\mathrm{vol}}
\def\co{\mathfrak{C}}
\def\b{\mathrm{b}}
\def\W{\mathrm{W}}
\def\Wk#1#2{\mathrm{W}^{^{(#1)}}_{#2}\!}
\def\wk#1#2{\mathcal{W}^{^{(#1)}}_{#2}\!}
\def\m{\mathfrak{m}}
\def\cir{\mathrm{R}}
\def\e{\mathrm{e}}
\def\u{\pi(\cdot)^2}
\def\esc#1{\left\langle #1\right\rangle}
\def\f#1#2#3{f_{_{\!#1,#2}}^{#3}}
\newcommand{\dlat}{\mathrm{d} }
\numberwithin{equation}{section}
\begin{document}

\title{Further inequalities for the (generalized) Wills functional}

\author[D. Alonso-Guti\'errez]{David Alonso-Guti\'errez}
\address{Departamento de Matem\'aticas, Universidad de Zaragoza, C/ Pedro Cerbuna 12,
50009-Zaragoza, Spain} \email{alonsod@unizar.es}

\author[M. A. Hern\'andez Cifre]{Mar\'\i a A. Hern\'andez Cifre}
\author[J. Yepes Nicol\'as]{Jes\'us Yepes Nicol\'as}
\address{Departamento de Matem\'aticas, Universidad de Murcia, Campus de
Espinar\-do, 30100-Murcia, Spain}
\email{mhcifre@um.es}\email{jesus.yepes@um.es}

\thanks{The work is partially supported by MICINN/FEDER
projects MTM2016-77710-P and PGC2018-097046-B-I00, by DGA E26\_17R
and by ``Programa de Ayudas a Grupos de Excelencia de la Regi\'on
de Murcia'', Fundaci\'on S\'eneca, 19901/GERM/15}

\subjclass[2000]{Primary 52A20, 52A39; Secondary 26B25, 52A40}

\keywords{Wills functional, intrinsic volumes, log-concave functions,
projection function, Asplund product, Brunn-Minkowski type inequalities,
Rogers-Shephard type inequalities, John position, McMullen's inequality}

\begin{abstract}
The Wills functional $\w(K)$ of a convex body $K$, defined as the sum of
its intrinsic volumes $\V_i(K)$, turns out to have many interesting
applications and properties. In this paper we make profit of the fact that
it can be represented as the integral of a log-concave function, which,
furthermore, is the Asplund product of other two log-concave functions,
and obtain new properties of the Wills functional (indeed, we will work in
a more general setting). Among others, we get upper bounds for $\w(K)$ in
terms of the volume of $K$, as well as Brunn-Minkowski and Rogers-Shephard
type inequalities for this functional. We also show that the cube of
edge-length 2 maximizes $\w(K)$ among all $0$-symmetric convex bodies in
John position, and we reprove the well-known McMullen inequality
$\w(K)\leq e^{\V_1(K)}$ using a different approach.
\end{abstract}

\maketitle

\section{Introduction, notation and main results}

Let $\K^n$ be the set of all convex bodies, i.e., non-empty compact convex
sets, in the $n$-dimensional Euclidean space $\R^n$. Let
$\langle\cdot,\cdot\rangle$ and $|\cdot|$ be the standard inner product
and the Euclidean norm in $\R^n$. For $K\in\K^n$ containing the origin in
its interior, $|x|_K=\min\{\lambda\geq 0:x\in\lambda K\}$ will denote the
well-known Minkowski functional of $K$, which, in the case when $K$ is
$0$-symmetric (i.e., $K=-K$), defines a norm whose unit ball is $K$.

We represent by $B^n_2$ the $n$-dimensional Euclidean (closed) unit ball
and by $\s^{n-1}$ its boundary. The volume of a measurable set
$M\subset\R^n$, i.e., its $n$-dimensional Lebesgue measure, is denoted by
$\vol(M)$ (or $\vol_n(M)$ if the distinction of the dimension is useful).
In particular, we write $\kappa_n=\vol(B^n_2)$, which takes the value
\begin{equation*}\label{e:kappa_n}
\kappa_n=\frac{\pi^{n/2}}{\Gamma\left(\frac{n}{2}+1\right)},
\end{equation*}
where $\Gamma(s)=\int_0^{\infty}t^{s-1}e^{-t}\,\dlat t$, for $s>0$,
represents the Gamma function.

The Grassmannian of $k$-dimensio\-nal linear subspaces of $\R^n$ is
denoted by $\G(n,k)$, and for $H\in\G(n,k)$, the orthogonal projection of
$M$ onto $H$ is denoted by $P_HM$, whereas the orthogonal complement of
$H$ is represented by $H^{\bot}$. Moreover, for $v\in\s^{n-1}$ and
$t\in\R$, we write $H_{v,t}=\bigl\{x\in\R^n:\langle x,v\rangle=t\bigr\}$.
With $\inter M$, $\bd M$ and $\conv M$ we denote the interior, boundary
and convex hull of $M$, respectively. Finally, as usual in the literature,
$\SL(n)$ stands for the subgroup of volume-preserving and
orientation-preserving linear transformations.

For convex bodies $K,E\in\K^n$ and a non-negative real number
$\lambda$, the well-known \emph{Steiner formula} states that the
volume of the Minkowski sum $K+\lambda E$ can be expressed as a
polynomial of degree (at most) $n$ in the parameter $\lambda$,
\begin{equation}\label{e:Steinerform}
\vol(K+\lambda E)=\sum_{i=0}^{n}\binom{n}{i}\W_i(K;E)\lambda^i;
\end{equation}
here, $\W_i(K;E)$ are the \emph{relative quermassintegrals} of $K$ with
respect to $E$, and they are a special case of the more general defined
mixed volumes (see e.g. \cite[Section~5.1]{Sch}). In particular,
$\W_0(K;E)=\vol(K)$ and $\W_n(K;E)=\vol(E)$. As usual in the literature,
we shorten $\W_i(K;B^n_2)=\W_i(K)$. In this case we observe that if
$K\in\K^n$ has dimension $\dim K=k$, then we can obtain the $i$-th
quermassintegral $\W_i(K)$ in $\R^n$, but also its $i$-th quermassintegral
computed in the $k$-dimensional affine subspace where $K$ is contained
(identified with $\R^k$), which we denote by $\Wk{k}{i}(K)$,
$i=0,\dots,k$. These two numbers do not coincide; indeed we have (see e.g.
\cite[Property~3.1]{SY93})
\[
\Wk{k}{i}(K)=\frac{\binom{n}{n-k+i}}{\binom{k}{i}}\frac{\kappa_i}{\kappa_{n-k+i}}\W_{n-k+i}(K),\quad
i=0,\dots,k,
\]
with $\W_i(K)=0$ for $i=0,\dots,n-k-1$. In order to avoid the
issue that quermassintegrals depend on the space where the convex
body is embedded, McMullen \cite{Mc75} defined the {\it intrinsic
volumes} of a convex body $K\in\K^n$ as
\[
\V_i(K)=\frac{\binom{n}{i}}{\kappa_{n-i}}\W_{n-i}(K),\quad
i=0,\dots,n.
\]
The particular case $i=1$ defines another well-known functional associated
to a convex body: the {\it mean width} $\b(K)$ of $K$. More precisely,
\[
\V_1(K)=\dfrac{n}{\kappa_{n-1}}\W_{n-1}(K)=\dfrac{n\kappa_n}{2\kappa_{n-1}}\b(K),
\]
and so it can be expressed in terms of the support function $h_K$ of $K$
as
\begin{equation}\label{e:mean_width}
\V_1(K)=\dfrac{1}{\kappa_{n-1}}\int_{\s^{n-1}}h_K(u)\,\dlat u
\end{equation}
(\cite[page~50 and (5.57)]{Sch}); here $\dlat u$ stands for the Lebesgue
measure on $\s^{n-1}$. We recall that the {\it support function} of $K$,
$h_K(u)=\max\bigl\{\langle z,u\rangle:z\in K\bigr\}$ for $u\in\s^{n-1}$,
is a convex function that uniquely determines the convex body (see e.g.
\cite[Section~1.7]{Sch}).

In 1973 (see \cite{W73}) Wills introduced and studied the functional
\begin{equation}\label{e:FWills}
\w(K)=\sum_{i=0}^n\binom{n}{i}\frac{\W_i(K)}{\kappa_i}=\sum_{i=0}^n\V_i(K)
\end{equation}
because of \emph{its possible relation} with the so-called
\emph{lattice-point enumerator} $G(K)=\#(K\cap\Z^n)$, and
conjectured that $\w(K)$ bounded by above $G(K)$. Although
Hadwiger \cite{Ha79} showed that Wills' conjecture was wrong (see
also \cite{BH93}), the Wills functional turned out to have several
interesting applications, e.g., in Discrete Geometry, where there
exist nice relations of this functional with the so-called
\emph{successive minima} of a convex body \cite{W90}, or in
deriving exponential moment inequalities for Gaussian random
processes \cite{Vi96}; see also \cite{Vi99,Vi01,Vi19}. It has also
been studied in other probabilistic context, \cite{Vi08}, and the
behavior of its roots has been analyzed when it is seen as a
formal polynomial in a complex variable, \cite{HCY13,HCY15,W75}.

The Wills functional exhibits many nice and engaging properties. For
instance, observe that $\w(K)$ depends only on the convex body, but not on
the dimension of the embedding space. In the next theorem we enumerate
some properties of $\w(K)$.
\begin{theo}\label{t:properties_Wills}
Let $H\in\G(n,k)$ and let $K,E\in\K^n$.
\begin{enumerate}
\item Hadwiger \cite[(2.3)]{Ha75}: if $K\subset H$ and $E\subset H^{\bot}$,
then
\[
\w(K)\w(E)=\w(K+E).
\]
\item Hadwiger \cite[(2.4)]{Ha75}: for all $v\in\s^{n-1}$ and any $r\geq 0$,
\[
\w(K)\geq\frac{\w(K\cap H_{v,0})+\w(K\cap H_{v,r})}{2}+\int_0^r\w(K\cap
H_{v,t})\,\dlat t.
\]
\item Wills \cite[(4.4)]{W75}: the $i$-th derivative of $\w(-\lambda
B^n_2)$ satisfies
\[
\frac{d^i\w(-\lambda
B^n_2)}{d\lambda^i}=\frac{n!\,\kappa_n}{i!\,\kappa_i}\w(-\lambda
B^i_2).
\]
\item McMullen \cite[Theorem 2]{Mc91}:
\[
\w(K)\leq e^{\V_1(K)}.
\]
\end{enumerate}
\end{theo}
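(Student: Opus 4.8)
The plan is to exploit the integral representation of the Wills functional as the integral of a log-concave function, namely
\[
\w(M)=\int_{\R^n}e^{-\pi\, d(x,M)^2}\,\dlat x,
\]
where $d(x,M)$ denotes the Euclidean distance from $x$ to $M$; this follows from the Steiner formula \eqref{e:Steinerform} together with the layer-cake formula and the evaluation of the one-dimensional Gaussian moments. Granting this, I would deduce McMullen's bound from the concavity of the single-variable function
\[
g(\lambda)=\log\w(\lambda K),\qquad \lambda\ge 0,
\]
via a tangent-line argument at the origin. Indeed, since $\V_i$ is homogeneous of degree $i$ we have $\w(\lambda K)=\sum_{i=0}^n\V_i(K)\lambda^i$, so $g(0)=\log\V_0(K)=0$ and $g'(0^+)=\V_1(K)$; if $g$ is concave on $[0,\infty)$ then it lies below its tangent line at $0$, whence $\log\w(K)=g(1)\le g'(0^+)=\V_1(K)$, which is exactly $\w(K)\le e^{\V_1(K)}$.

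So the crux is the concavity of $g$, i.e.\ the Brunn--Minkowski-type (log-concavity) statement
\[
\w\bigl(((1-\theta)\lambda_1+\theta\lambda_2)K\bigr)\ge \w(\lambda_1K)^{1-\theta}\,\w(\lambda_2K)^{\theta},
\]
valid because $(1-\theta)\lambda_1K+\theta\lambda_2K=((1-\theta)\lambda_1+\theta\lambda_2)K$ for the convex body $K$. I would obtain this from the Pr\'ekopa--Leindler inequality applied to the log-concave densities $f_j(x)=e^{-\pi d(x,\lambda_jK)^2}$ for $j=1,2$ and $f_\lambda$ with $\lambda=(1-\theta)\lambda_1+\theta\lambda_2$: it suffices to check, for all $x_1,x_2\in\R^n$, the pointwise inequality
\[
d\bigl((1-\theta)x_1+\theta x_2,\lambda K\bigr)^2\le(1-\theta)\,d(x_1,\lambda_1K)^2+\theta\,d(x_2,\lambda_2K)^2 .
\]

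To verify this bound I would take the metric projections $p_j\in\lambda_jK$ of $x_j$, so that $d(x_j,\lambda_jK)=|x_j-p_j|$. Then $(1-\theta)p_1+\theta p_2\in(1-\theta)\lambda_1K+\theta\lambda_2K=\lambda K$, hence $d((1-\theta)x_1+\theta x_2,\lambda K)\le|(1-\theta)(x_1-p_1)+\theta(x_2-p_2)|$, and the convexity of $v\mapsto|v|^2$ finishes the estimate. The main obstacle is thus concentrated in this single convexity step feeding Pr\'ekopa--Leindler; everything else (the identification of $g'(0^+)$ with $\V_1(K)$, and the concave-function tangent-line estimate) is routine. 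I expect the only delicate points to be bookkeeping: ensuring the representation above is in force, and handling the degenerate values $\lambda_j=0$, where $\lambda_jK$ collapses to the origin and $\w(0\cdot K)=1$ matches $g(0)=0$.
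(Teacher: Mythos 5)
Your proposal addresses only part iv); parts i)--iii) are external results that the paper also merely cites, so that is the right target, and your argument for iv) is correct. It is, however, a genuinely different route from the paper's. You establish the log-concavity of $\lambda\mapsto\w(\lambda K)$ by applying Pr\'ekopa--Leindler to the densities $e^{-\pi d(\cdot,\lambda_j K)^2}$ --- the pointwise inequality you check via metric projections and convexity of $|\cdot|^2$ is exactly the mechanism behind the paper's Theorem~\ref{t:BM_mult_W}, specialized to dilates of a single body --- and then you conclude by the tangent-line estimate at $\lambda=0$ for the concave function $g(\lambda)=\log\w(\lambda K)$, using $g(0)=\log\V_0(K)=0$ and $g'(0^+)=\V_1(K)$ (legitimate, since $\w(\lambda K)$ is a polynomial with constant term $1$, and the difference quotient of a concave function at the left endpoint is nonincreasing). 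The paper instead computes the Legendre transform $\L\bigl(\pi d(\cdot,K)^2\bigr)(x)=|x|^2/(4\pi)+h_K(x)$, bounds $\int(\f{K}{B_2^n}{\u})^{\circ}$ from below by $(2\pi)^n e^{-\V_1(K)}$ via Jensen's inequality against the Gaussian measure and the identity $\int h_K\,\dlat\gamma_n=\V_1(K)/\sqrt{2\pi}$, and closes with the functional Blaschke--Santal\'o inequality. Your approach is more elementary (no Santal\'o, no Legendre transform) and fits naturally with the Brunn--Minkowski machinery the paper develops later; the paper's approach has the advantage that the same Gaussian/Jensen computation, run on $\f{K}{B_2^n}{\u}$ itself rather than its polar, simultaneously yields the companion lower bound $\w(K)\geq e^{\V_1(K)-\pi\cir(K)^2}$ of Theorem~\ref{t:McMullen}, which your tangent-line argument does not produce.
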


Moreover, in \cite{Ha75} Hadwiger also showed several integral
representations of $\w(K)$. We emphasize the following ones (see
\cite[(1.3) and (1.4)]{Ha75}):
\begin{equation}\label{e:representacionesWills}
\begin{split}
\w(K) & =\int_{\R^n}e^{-\pi d(x,K)^2}\dlat x,\\
 \w(K) & =2\pi\int_0^{\infty}\vol(K+tB^n_2)t\,e^{-\pi t^2}\dlat t.
\end{split}
\end{equation}

In this paper we obtain new properties of the Wills functional (indeed, we
will work in a more general setting that will be stated in
Section~\ref{s:basic}).

The paper is organized as follows. In Section \ref{s:basic} we motivate
and introduce, on one hand, an extension of the classical Wills functional
in a more general setting, as well as further notation and basic results
for functions that will be needed later on. On the other hand, we prove
some preliminary results as, for instance, the useful property that the
Wills functional can be seen as the Asplund product of two log-concave
functions (Lemma~\ref{lem:Asplund}), which will be a key result throughout
the paper.

Next, Section \ref{s:bounds_W} is devoted to providing upper and lower
bounds for the classical Wills functional of a convex body $K$ in terms of
other functionals. For instance, $\w(K)$ can be bounded by the volume of
$K$ as follows:
\begin{theorem}\label{t:8^n/2vol}
Let $K\in\K^n$ be a convex body with non-empty interior. Then
\begin{equation*}
\left(8^{n/2}\vol(K)\right)^{1/2}\leq\w(K)
\leq\frac{8^{n/2}\vol(K)}{\sup_{y\in\R^n}\gamma_n\bigl(y+\sqrt{2\pi}K\bigr)}.
\end{equation*}
\end{theorem}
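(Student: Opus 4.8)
The plan is to handle the two inequalities by entirely different tools: the lower bound comes from a Brunn--Minkowski argument applied to the second integral representation in \eqref{e:representacionesWills}, while the upper bound is a Rogers--Shephard type inequality for the Asplund product representation of $\w$.

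For the lower bound I would start from $\w(K)=2\pi\int_0^\infty\vol(K+tB^n_2)\,t\,e^{-\pi t^2}\dlat t$ and insert Brunn--Minkowski, $\vol(K+tB^n_2)^{1/n}\ge\vol(K)^{1/n}+t\kappa_n^{1/n}$, followed by the pointwise bound $(a+b)^n\ge 2^n a^{n/2}b^{n/2}$ with $a=\vol(K)^{1/n}$ and $b=t\kappa_n^{1/n}$. This yields $\vol(K+tB^n_2)\ge 2^n\kappa_n^{1/2}\vol(K)^{1/2}t^{n/2}$, so that
\[
\w(K)\ge 2^n\kappa_n^{1/2}\vol(K)^{1/2}\,2\pi\int_0^\infty t^{n/2+1}e^{-\pi t^2}\dlat t
=2^n\frac{\Gamma\!\left(\tfrac n4+1\right)}{\Gamma\!\left(\tfrac n2+1\right)^{1/2}}\vol(K)^{1/2},
\]
after evaluating the Gaussian moment $2\pi\int_0^\infty t^{n/2+1}e^{-\pi t^2}\dlat t=\pi^{-n/4}\Gamma(\tfrac n4+1)$ and using $\kappa_n^{1/2}\pi^{-n/4}=\Gamma(\tfrac n2+1)^{-1/2}$. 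It then remains to verify the elementary Gamma inequality $\Gamma(\tfrac n2+1)\le 4^{n/4}\Gamma(\tfrac n4+1)^2$, which is exactly the central binomial bound $\binom{2m}{m}\le 4^m$ with $m=n/4$ (equivalently $\Gamma(m+\tfrac12)\le\sqrt\pi\,\Gamma(m+1)$ via Legendre duplication). This bounds the denominator above by $2^{n/4}\Gamma(\tfrac n4+1)$, turning the constant into $2^{n}\cdot 2^{-n/4}=2^{3n/4}$ and giving $\w(K)\ge 2^{3n/4}\vol(K)^{1/2}=(8^{n/2}\vol(K))^{1/2}$.

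For the upper bound I would first rewrite the right-hand quantity: since $\gamma_n(y+\sqrt{2\pi}K)=\int_K e^{-\pi|z+y/\sqrt{2\pi}|^2}\dlat z$, one has $\sup_{y}\gamma_n(y+\sqrt{2\pi}K)=\sup_{w}\int_K e^{-\pi|z-w|^2}\dlat z=:M^*$, the maximal Gaussian content of a translate of $K$. Using the Asplund representation $e^{-\pi d(\cdot,K)^2}=\mathbf 1_K\star e^{-\pi|\cdot|^2}$ (Lemma~\ref{lem:Asplund}) together with the variational inequality for the metric projection $P_K$, namely $|x-y|^2\ge d(x,K)^2+|P_K(x)-y|^2$ for $y\in K$, I obtain $\int_K e^{-\pi|x-y|^2}\dlat y\le e^{-\pi d(x,K)^2}M^*$; integrating in $x$ and applying Fubini already produces the companion \emph{lower} bound $\w(K)\ge\vol(K)/M^*$. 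The claimed inequality $\w(K)\,M^*\le 8^{n/2}\vol(K)$ is precisely the reverse of this, i.e.\ a Rogers--Shephard type inequality for the Asplund product of $\mathbf 1_K$ and the Gaussian.

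The main obstacle is exactly this reverse inequality with the \emph{sharp} constant $8^{n/2}$. A generic functional Rogers--Shephard inequality for Asplund products only delivers the weaker constant $\binom{2n}{n}\sim 4^n$, so the improvement to $8^{n/2}=2^{3n/2}<4^n$ must exploit the specific Gaussian form of the second factor (for instance the self-reproducing identities $e^{-2\pi|\cdot|^2}\star e^{-2\pi|\cdot|^2}=e^{-\pi|\cdot|^2}$ and $e^{-2\pi|\cdot|^2}\ast e^{-2\pi|\cdot|^2}=2^{-n}e^{-\pi|\cdot|^2}$, which link the Asplund product to an ordinary convolution). I expect the real work to lie in upgrading the projection inequality to two-sided control and extracting the Gaussian-sharp factor, whereas the lower bound above is completely elementary once the second representation in \eqref{e:representacionesWills} is available.
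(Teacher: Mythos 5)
Your lower bound is correct, complete, and follows a genuinely different route from the paper. The paper obtains $\w(K)\geq\bigl(8^{n/2}\vol(K)\bigr)^{1/2}$ as the case $\lambda=1/2$ of the Pr\'ekopa--Leindler-based estimate \eqref{e:lowerboundWills_classical}, applied to the Asplund decomposition $\f{K}{B_2^n}{\u}=e^{-\pi|\cdot|^2}\star\chi_{_K}$ of Lemma~\ref{lem:Asplund}; you instead feed the classical Brunn--Minkowski inequality and the two-term AM--GM into the Steiner-type representation $\w(K)=2\pi\int_0^\infty\vol(K+tB_2^n)\,te^{-\pi t^2}\,\dlat t$, and reduce the constant to the Gamma inequality $\Gamma(2m+1)\le 4^m\Gamma(m+1)^2$, which does hold for all real $m\ge0$ by the duplication formula together with the monotonicity of $\Gamma(m+\tfrac12)/\Gamma(m+1)$. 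Your version is more elementary (no functional Pr\'ekopa--Leindler needed) and in fact delivers the marginally stronger constant $2^n\Gamma(\tfrac n4+1)\Gamma(\tfrac n2+1)^{-1/2}\ge 2^{3n/4}$.

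The upper bound, however, is not proved. You correctly reformulate it as the Rogers--Shephard-type statement $\w(K)\,\sup_{w}\int_Ke^{-\pi|z-w|^2}\,\dlat z\le 8^{n/2}\vol(K)$, note that the generic Asplund-product inequality \eqref{eq:RSfunctions} only yields the constant $\binom{2n}{n}$, and correctly guess that the Gaussian self-reproducing structure must supply the improvement --- but you stop there, and this missing step is precisely the content of the paper's Theorem~\ref{t:W<vol_gamma_n}. The paper closes it by applying the difference-function inequality \eqref{e:diff_function} of Artstein-Avidan, Einhorn, Florentin and Ostrover with $\lambda=1/2$ to $f=e^{-u(|\cdot|_E)}$ and $g=\chi_{_K}$: unwinding the definition of $\Delta_{1/2}^{f,g}$ gives $\mu_{u/2,E}(-K)\,\w_{u/2}(K;E)\le 4^n\m_n^{u}\vol(K)\vol(E)$, where the \emph{halved} potential $u/2$ appears because both left-hand factors in \eqref{e:diff_function} involve square roots of $f$ and of $f\star g$; replacing $u$ by $2u$ and translating then yields $\sup_y\mu_{u,E}(y-K)\,\w_u(K;E)\le 4^n\m_n^{2u}\vol(K)\vol(E)$. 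For the Gaussian one computes $\m_n^{2\u}=2^{-n/2}\m_n^{\u}=2^{-n/2}\kappa_n^{-1}$, and this factor $2^{-n/2}$ is exactly what converts $4^n$ into $8^{n/2}$ after multiplying by $\vol(B_2^n)=\kappa_n$. So your diagnosis of where the saving must come from is right, but without invoking and carrying out \eqref{e:diff_function} at $\lambda=1/2$ together with the moment identity $\m_n^{2\u}=2^{-n/2}\m_n^{\u}$, the upper bound remains unestablished.
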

Here $\gamma_n$ denotes the standard Gaussian probability measure on
$\R^n$, this~is,
\begin{equation}\label{e:gaussian_measure}
\dlat\gamma_n(x)=\frac{1}{(2\pi)^{n/2}}e^{-\frac{|x|^2}{2}}\,\dlat x.
\end{equation}
We will also reprove McMullen's result (see
Theorem~\ref{t:properties_Wills} iv)) using a different approach, and will
obtain a lower bound for the Wills functional in the same spirit as
McMullen's bound. To this end we denote by $\cir(K)$ the {\em
circumradius} of $K$, i.e., $\cir(K)=\min\{R>0:\exists\,x\in\R^n\text{
with }K\subset x+RB_2^n\}$.
\begin{theorem}\label{t:McMullen}
Let $K\in\K^n$. Then
\[
e^{\V_1(K)-\pi\cir(K)^2}\leq\w(K)\leq e^{\V_1(K)}.
\]
\end{theorem}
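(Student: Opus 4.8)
The plan is to derive both inequalities from the first integral representation in \eqref{e:representacionesWills}, namely $\w(K)=\int_{\R^n}e^{-\pi d(x,K)^2}\,\dlat x$, exploiting the log-concave structure of the integrand.

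For the upper bound I would first establish that the map $(\lambda,x)\mapsto d(x,\lambda K)$ is jointly convex on $[0,\infty)\times\R^n$. Given $(\lambda_1,x_1)$ and $(\lambda_2,x_2)$ with nearest points $\lambda_iz_i$, $z_i\in K$, the point $\bar z=(\lambda_1z_1+\lambda_2z_2)/(\lambda_1+\lambda_2)$ lies in $K$ and realizes the average of the $\lambda_iz_i$, whence the value of $d$ at the midpoint is at most the average of $d(x_i,\lambda_i K)$. Thus $\pi\,d(x,\lambda K)^2$ is jointly convex, the integrand $e^{-\pi d(x,\lambda K)^2}$ is jointly log-concave, and Pr\'ekopa's theorem gives that $\lambda\mapsto\w(\lambda K)=\int_{\R^n}e^{-\pi d(x,\lambda K)^2}\,\dlat x$ is log-concave on $[0,\infty)$. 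Since $\w(\lambda K)=\sum_i\V_i(K)\lambda^i$ yields $\w(0)=1$ and $\bigl(\log\w(\lambda K)\bigr)'\big|_{0^+}=\V_1(K)$, concavity of $\log\w(\lambda K)$ forces it to lie below its tangent at the origin, i.e. $\log\w(\lambda K)\le\lambda\,\V_1(K)$; evaluating at $\lambda=1$ recovers McMullen's inequality.

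For the lower bound I would place the circumcenter of $K$ at the origin, so that $K\subseteq\cir(K)B^n_2$. Writing $x=ru$ with $r=|x|$ and $u\in\s^{n-1}$, and letting $q_u\in K$ attain $h_K(u)$, the estimate $d(x,K)\le|x-q_u|$ together with $|q_u|\le\cir(K)$ produces the pointwise majorant $d(x,K)^2\le r^2-2r\,h_K(u)+\cir(K)^2$. Substituting this into \eqref{e:representacionesWills} and passing to polar coordinates factors out $e^{-\pi\cir(K)^2}$ and leaves $\int_{\s^{n-1}}\bigl(\int_0^\infty e^{-\pi r^2+2\pi r\,h_K(u)}r^{n-1}\,\dlat r\bigr)\dlat u$. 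I would then recognize $\dlat\nu(r)=n\kappa_n\,e^{-\pi r^2}r^{n-1}\,\dlat r$ as a probability measure on $(0,\infty)$, so the inner integral equals $(n\kappa_n)^{-1}M(h_K(u))$, where $M(h)=\int_0^\infty e^{2\pi rh}\,\dlat\nu(r)$ is the moment generating function of $\nu$.

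The argument then closes with convexity used at two levels. Since $M$ is convex in $h$, Jensen's inequality over the uniform measure on $\s^{n-1}$ together with the mean-width formula \eqref{e:mean_width} bounds $(n\kappa_n)^{-1}\int_{\s^{n-1}}M(h_K(u))\,\dlat u$ from below by $M(\bar h)$, where $\bar h=(n\kappa_n)^{-1}\int_{\s^{n-1}}h_K(u)\,\dlat u=\kappa_{n-1}\V_1(K)/(n\kappa_n)$. Finally $\log M$ is a cumulant generating function, hence convex with $\log M(0)=0$, and a short moment computation gives $(\log M)'(0)=2\pi\int_0^\infty r\,\dlat\nu(r)=n\kappa_n/\kappa_{n-1}$, so $\log M(\bar h)\ge(n\kappa_n/\kappa_{n-1})\bar h=\V_1(K)$; assembling the factors yields $\w(K)\ge e^{\V_1(K)-\pi\cir(K)^2}$. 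I expect the delicate step to be precisely this final matching: verifying that the slope of $\log M$ at the origin reproduces exactly the normalization appearing in \eqref{e:mean_width}, so that the tangent line to $\log M$ is the linear map $\bar h\mapsto\V_1(K)$ and the convexity bound becomes sharp to first order.
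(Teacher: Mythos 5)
Your proof is correct, and your upper bound takes a genuinely different route from the paper's. For $\w(K)\leq e^{\V_1(K)}$ the paper computes the Legendre transform of $\pi d(\cdot\,,K)^2$ (Lemma~\ref{l:L(f)}), bounds $\int_{\R^n}e^{-|x|^2/(4\pi)-h_K(x)}\,\dlat x$ from below by $(2\pi)^n e^{-\V_1(K)}$ via Jensen, and then invokes the functional Blaschke--Santal\'o inequality of \cite{KM,AKM}. You instead show that $\lambda\mapsto\log\w(\lambda K)$ is concave on $[0,\infty)$ --- your midpoint argument for the joint convexity of $(\lambda,x)\mapsto d(x,\lambda K)$ is sound, and in fact the needed log-concavity is exactly Theorem~\ref{t:BM_mult_W} applied to $\lambda_1K$ and $\lambda_2K$ --- and then compare $\log\w(\lambda K)$ with its tangent at $\lambda=0$, where $\w(\lambda K)=\sum_{i}\V_i(K)\lambda^i$ gives value $0$ and slope $\V_1(K)$. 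This is essentially Vitale's log-concavity proof \cite{Vi99}, which the paper cites as an alternative; it is more elementary (Pr\'ekopa--Leindler suffices, no Blaschke--Santal\'o), though it does not yield the polar-function identity that the paper's route produces as a by-product. Your lower bound is in substance the paper's: both replace $d(x,K)$ by the distance to a support point $q$ with $|q|\leq\cir(K)$, arrive at $\w(K)\geq e^{-\pi\cir(K)^2}$ times an integral of an exponential of $h_K$, and finish with Jensen; the paper does this in one step against the Gaussian measure, computing $\int h_K\,\dlat\gamma_n=\V_1(K)/\sqrt{2\pi}$ as in \eqref{e:int_h_k}, whereas you factor out the radial moment generating function $M$ and apply convexity twice (Jensen on the sphere for $M$, then the tangent bound for $\log M$ at the origin). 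Your ``delicate'' normalization does check out: $(\log M)'(0)=2\pi n\kappa_n\int_0^\infty e^{-\pi r^2}r^n\,\dlat r=n\kappa_n\Gamma\bigl(\tfrac{n+1}{2}\bigr)\pi^{-(n-1)/2}=n\kappa_n/\kappa_{n-1}$, which against $\bar h=\kappa_{n-1}\V_1(K)/(n\kappa_n)$ from \eqref{e:mean_width} gives exactly $\V_1(K)$.
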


Next, we show that the cube $[-1,1]^n$ maximizes the Wills functional
among all $0$-symmetric convex bodies in John position. We recall that a
convex body $K$ is said to {\it be in John position} if the maximum volume
ellipsoid contained in $K$ is the Euclidean unit ball.
\begin{theorem}\label{t:John}
Let $K\in\K^n$ be a $0$-symmetric set in John position. Then
\begin{equation*}
\w(K)\leq\w\bigl([-1,1]^n\bigr).
\end{equation*}
\end{theorem}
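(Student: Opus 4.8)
The plan is to use the integral representation $\w(K)=\int_{\R^n}e^{-\pi d(x,K)^2}\,\dlat x$ from \eqref{e:representacionesWills}, together with John's decomposition and the geometric Brascamp--Lieb inequality. First I would record the target value: since $[-1,1]^n$ splits orthogonally into $n$ segments, Theorem~\ref{t:properties_Wills}~i) gives $\w([-1,1]^n)=\w([-1,1])^n$, and the one-dimensional computation $\w([-1,1])=\int_\R e^{-\pi d(t,[-1,1])^2}\,\dlat t=2+2\int_0^\infty e^{-\pi s^2}\,\dlat s=3$ shows that the assertion is equivalent to the clean bound $\w(K)\le 3^n$.

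Next I would extract the John data. Since $K$ is $0$-symmetric and in John position, there exist contact points $u_1,\dots,u_m\in\s^{n-1}\cap\bd K$ (lying on $\s^{n-1}$ because $B^n_2$ is the inscribed ellipsoid) and weights $c_j>0$ with $\sum_{j}c_j\,u_j\otimes u_j=\I_n$; taking traces gives $\sum_j c_j=n$, and the family may be taken symmetric. At each such point the supporting hyperplane of $B^n_2$ also supports $K$, so $h_K(\pm u_j)=1$ and hence $\langle z,u_j\rangle\in[-1,1]$ for every $z\in K$.

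The heart of the argument is a pointwise lower bound for the distance. Fixing $x\in\R^n$ and letting $y\in K$ be its nearest point, I would expand $|x-y|^2$ through the John decomposition and use that $\langle y,u_j\rangle\in[-1,1]$ to obtain
\[
d(x,K)^2=\sum_{j}c_j\langle x-y,u_j\rangle^2\ge\sum_j c_j\,d\bigl(\langle x,u_j\rangle,[-1,1]\bigr)^2,
\]
since $\langle y,u_j\rangle$ is a point of $[-1,1]$ and is therefore at distance at least $d(\langle x,u_j\rangle,[-1,1])$ from $\langle x,u_j\rangle$. Setting $f(t)=e^{-\pi d(t,[-1,1])^2}$, this reads $e^{-\pi d(x,K)^2}\le\prod_j f(\langle x,u_j\rangle)^{c_j}$, after which the geometric Brascamp--Lieb inequality --- whose hypothesis is exactly $\sum_j c_j u_j\otimes u_j=\I_n$ --- yields
\[
\w(K)\le\int_{\R^n}\prod_j f(\langle x,u_j\rangle)^{c_j}\,\dlat x\le\prod_j\Bigl(\int_\R f\Bigr)^{c_j}=3^{\sum_j c_j}=3^n.
\]

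I expect the main obstacle to be isolating the correct pointwise inequality rather than the Brascamp--Lieb step itself. The naive estimate $d(x,K)\ge\max_j d(\langle x,u_j\rangle,[-1,1])$ is too lossy: it destroys the additive (product) structure and prevents the John weights from reconstituting $\I_n$. The decisive point is that writing $|x-y|^2=\sum_j c_j\langle x-y,u_j\rangle^2$ turns the \emph{single} nearest-point term into a \emph{weighted sum} of one-dimensional slab distances, whose weights are precisely those required by Brascamp--Lieb and for which the estimate is tight for the cube (where $u_j=\pm e_i$ and the $c_j$ are constant). Verifying that contact points indeed satisfy $h_K(u_j)=1$ and that the symmetric John decomposition can be used as stated are the remaining technical checks.
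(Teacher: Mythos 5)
Your proposal is correct and follows essentially the same route as the paper: John's decomposition $\I_n=\sum_j c_j\,u_j\otimes u_j$, the pointwise estimate $d(x,K)^2=\sum_j c_j\langle x-y,u_j\rangle^2\geq\sum_j c_j\,d\bigl(\langle x,u_j\rangle,[-1,1]\bigr)^2$, the geometric Brascamp--Lieb inequality with the John weights, and the product formula $\w([-1,1]^n)=3^n$. The only cosmetic difference is that the paper first replaces $K$ by the slab intersection $L=\{x:|\langle x,v_i\rangle|\leq1\}$ and bounds $\w(L)$, whereas you work directly with $K$ and its nearest-point map; the two are equivalent.
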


In Section \ref{s:Wills_B-M} we study some Brunn-Minkowski type
inequalities for the Wills functional. As we will see, $\w(\cdot)$ is not,
in general, ($1/n$)-concave and so, either less restrictive concavity or
additional constants are needed. In this regard we show, among others, the
following results:
\begin{theorem}\label{t:BM_mult_W}
Let $K,L\in\K^n$ and let $\lambda\in(0,1)$. Then
\begin{equation*}
\w\bigl((1-\lambda)K+\lambda L\bigr)\geq\w(K)^{1-\lambda}\w(L)^\lambda.
\end{equation*}
\end{theorem}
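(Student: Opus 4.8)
The plan is to exploit the integral representation $\w(K)=\int_{\R^n}e^{-\pi d(x,K)^2}\dlat x$ from \eqref{e:representacionesWills} together with the Pr\'ekopa--Leindler inequality. Since $\w$ is the integral of the log-concave function $x\mapsto e^{-\pi d(x,K)^2}$, the natural route is to produce the three functions required by Pr\'ekopa--Leindler, adapted to the bodies $K$, $L$ and $(1-\lambda)K+\lambda L$, and then to verify the pointwise hypothesis they must satisfy.

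Concretely, I would set $f(x)=e^{-\pi d(x,K)^2}$, $g(y)=e^{-\pi d(y,L)^2}$ and $h(z)=e^{-\pi d(z,\,(1-\lambda)K+\lambda L)^2}$. The Pr\'ekopa--Leindler inequality then reduces the theorem to checking that
\[
h\bigl((1-\lambda)x+\lambda y\bigr)\geq f(x)^{1-\lambda}g(y)^{\lambda}\qquad\text{for all }x,y\in\R^n,
\]
which, after taking logarithms and cancelling the factor $\pi$, is equivalent to the squared-distance inequality
\[
d\bigl((1-\lambda)x+\lambda y,\,(1-\lambda)K+\lambda L\bigr)^2\leq(1-\lambda)\,d(x,K)^2+\lambda\,d(y,L)^2.
\]

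The crux --- and the only genuinely geometric step --- is this last inequality. To establish it I would pick nearest points $p\in K$ and $q\in L$ realizing $d(x,K)=|x-p|$ and $d(y,L)=|y-q|$. Then $(1-\lambda)p+\lambda q\in(1-\lambda)K+\lambda L$ is an admissible competitor for the distance on the left, so
\[
d\bigl((1-\lambda)x+\lambda y,\,(1-\lambda)K+\lambda L\bigr)^2\leq\bigl|(1-\lambda)(x-p)+\lambda(y-q)\bigr|^2,
\]
and the convexity of the squared Euclidean norm $v\mapsto|v|^2$ bounds the right-hand side by $(1-\lambda)|x-p|^2+\lambda|y-q|^2$, which is exactly the required estimate. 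With the pointwise hypothesis in hand, Pr\'ekopa--Leindler yields $\int h\geq\bigl(\int f\bigr)^{1-\lambda}\bigl(\int g\bigr)^{\lambda}$, that is, $\w\bigl((1-\lambda)K+\lambda L\bigr)\geq\w(K)^{1-\lambda}\w(L)^{\lambda}$.

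I expect no serious obstacle here: the whole argument hinges on controlling the squared distance to a convex set by convexity of $|\cdot|^2$, and this is essentially free once the right competitor point is chosen. An alternative would be to invoke the Asplund-product description of $\w$ from Lemma~\ref{lem:Asplund} together with the known log-concavity behaviour of the Asplund product under Minkowski combinations; however, the direct Pr\'ekopa--Leindler route above is more self-contained and transfers verbatim to the generalized setting of Section~\ref{s:basic}.
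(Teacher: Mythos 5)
Your proof is correct and follows essentially the same route as the paper: the paper also applies the Pr\'ekopa--Leindler inequality to the three functions $e^{-\pi d(\cdot,K)^2}$, $e^{-\pi d(\cdot,L)^2}$ and $e^{-\pi d(\cdot,(1-\lambda)K+\lambda L)^2}$ (in the generalized form $\f{K}{E}{u}$), reducing everything to the pointwise inequality $d\bigl((1-\lambda)x+\lambda y,(1-\lambda)K+\lambda L\bigr)\leq(1-\lambda)d(x,K)+\lambda d(y,L)$ followed by convexity and monotonicity of $u$, which in your special case $u(t)=\pi t^2$ is exactly your nearest-point plus convexity-of-$|\cdot|^2$ argument. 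The only difference is cosmetic: the paper works at the level of the generalized Wills functional $\w_u(\cdot\,;E)$ and specializes, as you anticipate at the end.
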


\begin{theorem}\label{t:BM_W_constant}
Let $K,L\in\K^n$ and let $\lambda\in(0,1)$. Then
\begin{equation*}
\w\bigl((1-\lambda)K+\lambda L\bigr)^{1/n}
\geq\frac{1}{(n!)^{1/n}}\left((1-\lambda)\w(K)^{1/n}
+\lambda\w(L)^{1/n}\right).
\end{equation*}
\end{theorem}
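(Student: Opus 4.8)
The plan is to exploit the second integral representation in \eqref{e:representacionesWills}, rewriting $\w$ as an average of volumes of outer parallel bodies against a fixed probability measure, and then to reduce the statement to a one--dimensional reverse Jensen inequality, which is precisely where the factor $(n!)^{1/n}$ is forced to appear. First I would set $\dlat\mu(t)=2\pi t\,e^{-\pi t^2}\dlat t$; then $\mu$ is a probability measure on $[0,\infty)$ and, by \eqref{e:representacionesWills}, $\w(K)=\int_0^\infty\vol(K+tB^n_2)\,\dlat\mu(t)$ for every $K\in\K^n$. Writing $M=(1-\lambda)K+\lambda L$ and using $tB^n_2=(1-\lambda)tB^n_2+\lambda tB^n_2$, one gets the key pointwise identity
\[
M+tB^n_2=(1-\lambda)(K+tB^n_2)+\lambda(L+tB^n_2),\qquad t\ge0,
\]
so that the classical Brunn--Minkowski inequality yields, for every $t\ge0$,
\[
\vol(M+tB^n_2)^{1/n}\ge(1-\lambda)\vol(K+tB^n_2)^{1/n}+\lambda\vol(L+tB^n_2)^{1/n}.
\]
Denoting $G_C(t)=\vol(C+tB^n_2)^{1/n}$, which is non-negative, non-decreasing and (again by Brunn--Minkowski) concave in $t\ge0$, this reads $G_M\ge(1-\lambda)G_K+\lambda G_L$ pointwise.

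Next I would run the whole estimate in the spaces $L^p(\mu)$. Since $\mu$ is a probability measure, $\|\cdot\|_{L^1(\mu)}\le\|\cdot\|_{L^n(\mu)}$, and by construction $\|G_C\|_{L^n(\mu)}^n=\int_0^\infty\vol(C+tB^n_2)\,\dlat\mu(t)=\w(C)$. Combining the pointwise bound with the monotonicity and linearity of the $L^1(\mu)$--norm on non-negative functions gives
\[
\w(M)^{1/n}=\|G_M\|_{L^n(\mu)}\ge\|G_M\|_{L^1(\mu)}\ge(1-\lambda)\|G_K\|_{L^1(\mu)}+\lambda\|G_L\|_{L^1(\mu)}.
\]
Hence the theorem follows as soon as one establishes the one--variable reverse Jensen inequality $\|G_C\|_{L^n(\mu)}\le(n!)^{1/n}\|G_C\|_{L^1(\mu)}$ for every non-negative, non-decreasing, concave $G_C$; applying it to $C=K$ and $C=L$ and inserting it above produces exactly the constant $1/(n!)^{1/n}$.

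The main obstacle is this last inequality, and it is here that $n!$ enters. I would change variables via $t\mapsto s=\pi t^2$, under which $\mu$ becomes the standard exponential distribution $e^{-s}\dlat s$ and $h(s):=G_C\big(\sqrt{s/\pi}\big)$ is still non-negative, non-decreasing and concave (a composition of such functions). The inequality then reads $\int_0^\infty h(s)^n e^{-s}\dlat s\le n!\big(\int_0^\infty h(s)e^{-s}\dlat s\big)^n$. Writing $c=\int_0^\infty h(s)e^{-s}\dlat s$ and comparing $h$ with the linear map $s\mapsto cs$, which has the same $\mu$--mean, the concavity of $h$ together with $h(0)\ge0$ forces the difference $h(s)-cs$ to change sign exactly once, from $+$ to $-$. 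By the cut (Ohlin) criterion this means that $h$ is dominated in the convex order by $cs$, whence $\int_0^\infty h(s)^n e^{-s}\dlat s\le c^n\int_0^\infty s^n e^{-s}\dlat s=n!\,c^n$, using that the $n$-th moment of the exponential law equals $n!$. Equality forces $h$, and thus $G_C$, to be linear, i.e.\ the extremal case is that of Euclidean balls.

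The only genuinely delicate step is the convex--order comparison. If one prefers to avoid it, the same constant can be reached by a convexity/extreme--point argument: the functional $h\mapsto\int_0^\infty h(s)^n e^{-s}\dlat s$ is convex, so on the convex cone of non-negative non-decreasing concave functions with a prescribed first moment it is maximised at an extreme ray, and a direct computation shows that this maximum is attained at the linear function $h(s)=cs$, again giving the value $n!\,c^n$. Either route isolates $n!$ as the exponential moment and leaves the rest of the proof as the elementary $L^p(\mu)$ chain above.
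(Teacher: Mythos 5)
Your proof is correct, and its skeleton coincides with the paper's: both write $\w$ as the exponential average of the parallel volumes $\vol(\cdot+tB^n_2)$, apply Brunn--Minkowski under the integral sign, use Jensen's inequality to pass from $\int\vol(M+tB^n_2)^{1/n}\,\dlat\mu(t)$ up to $\w(M)^{1/n}$, and then invoke a Berwald-type reverse inequality
$\int_0^\infty h(s)^n e^{-s}\,\dlat s\leq n!\left(\int_0^\infty h(s)e^{-s}\,\dlat s\right)^n$
for non-negative, non-decreasing, concave $h$ in order to recover $\w(K)^{1/n}$ and $\w(L)^{1/n}$ at the cost of the factor $(n!)^{-1/n}$. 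The one substantive difference is that the paper imports this last step as a black box (the extension of Berwald's inequality in \cite[Lemma~2.1]{ABG2}, applied after the same change of variable $s=\pi t^2$ that you perform), whereas you prove the special case you need from scratch via the single-crossing comparison of $h$ with the linear function $s\mapsto cs$ of equal exponential mean. That argument is sound: $h-cs$ is concave and non-negative at the origin, so its non-negativity set is an interval containing $0$; equality of means forces exactly one sign change from $+$ to $-$; and, since both functions are non-decreasing, this yields the single crossing of the distribution functions of $h(S)$ and $cS$ (with $S$ exponential) required by Ohlin's lemma, whence convexity of $x\mapsto x^n$ gives $\int h^ne^{-s}\,\dlat s\leq c^n\,n!$. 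Your write-up is therefore self-contained where the paper's is not; the trade-off is that the cited lemma covers the full range of $L^p$--$L^q$ comparisons and produces the extra factor $e^{-\frac{n-1}{n}u(0)}$ needed for the generalized functional $\w_u(\cdot\,;E)$, while your argument is tailored to the pair $(1,n)$ and to the classical case $u(0)=0$ --- which is all the stated theorem requires.
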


However, although $\w(\cdot)$ does not satisfy a Brunn-Minkowski
inequality in its classical form (i.e., with exponent $1/n$), a reverse
Brunn-Minkowski inequality, analogous to the one for the volume proved by
Milman \cite{Mi}, holds for the Wills functional:
\begin{theorem}\label{t:reverse_BM_W}
Let $K,L\in\K^n$. Then there exist $T\in\SL(n)$ and an absolute constant
$C>0$ such that
\begin{equation*}
\w(K+TL)^{1/n}\leq C\left[\w(K)^{1/n}+\w(L)^{1/n}\right].
\end{equation*}
\end{theorem}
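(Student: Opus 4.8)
The plan is to transfer Milman's reverse Brunn--Minkowski inequality for the volume \cite{Mi} to $\w$ by means of a two-sided comparison between $\w(M)$ and the volume of a Euclidean enlargement of $M$ at the \emph{Gaussian scale} $\sqrt n$. Concretely, I would first establish the sandwich
\[
\tfrac12\,\w(M)\le\vol\bigl(M+\sqrt{n}\,B^n_2\bigr)\le e^{\pi n}\w(M)\qquad(M\in\K^n),
\]
so that, after taking $n$-th roots, $\w(M)^{1/n}$ and $\vol(M+\sqrt n\,B^n_2)^{1/n}$ agree up to the absolute factor $e^{\pi}$. Both halves follow from the second representation in \eqref{e:representacionesWills} together with the monotonicity of $t\mapsto\vol(M+tB^n_2)$. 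For the upper bound one keeps only the tail $t\ge\sqrt n$ and uses $2\pi\int_{\sqrt n}^{\infty}t\,e^{-\pi t^2}\,\dlat t=e^{-\pi n}$. For the lower bound one splits the integral at $t=\sqrt n$, bounds the piece $t\le\sqrt n$ by $\vol(M+\sqrt n\,B^n_2)$ and, assuming (by translation invariance of $\w$) that $0\in M$, controls the tail through $M+tB^n_2\subseteq(t/\sqrt n)\bigl(M+\sqrt n\,B^n_2\bigr)$ and $2\pi\int_0^\infty t^{n+1}e^{-\pi t^2}\,\dlat t=\kappa_n^{-1}$; the resulting factor $1+n^{-n/2}\kappa_n^{-1}$ is at most $2$.

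Next I record the elementary one-sided estimates feeding the endpoints. From $\w=\sum_{i=0}^n\V_i$ and $\V_i\ge0$ one gets at once $\vol(M)=\V_n(M)\le\w(M)$ and $1=\V_0(M)\le\w(M)$: the first controls the right-hand side, since $\vol(M)^{1/n}\le\w(M)^{1/n}$, and the second allows additive absolute constants to be absorbed into $\w(K)^{1/n}+\w(L)^{1/n}$. I would also use the isoperimetric monotonicity of the intrinsic volumes, namely $\V_i(rB^n_2)\le\V_i(L)$ whenever $\vol(rB^n_2)=\vol(L)$, hence $\w(rB^n_2)\le\w(L)$, in order to pass from a \emph{rounded} copy of $L$ back to $\w(L)$.

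With the comparison in hand, the problem reduces to a reverse Brunn--Minkowski inequality for the functional $M\mapsto\vol\bigl(M+\sqrt n\,B^n_2\bigr)$. I would invoke Milman's theorem \cite{Mi} to choose $T\in\SL(n)$ bringing $L$ into an $M$-position adapted to the fixed body $K$, and replace $K$ and $TL$ by their $M$-ellipsoids $\mathcal E_K$ and $T\mathcal E_L$ via covering-number estimates (which cost only a factor $e^{cn}$ and are unaffected by adding a common ball):
\[
\vol\bigl(K+TL+\sqrt n\,B^n_2\bigr)\le e^{cn}\,\vol\bigl(\mathcal E_K+T\mathcal E_L+\sqrt n\,B^n_2\bigr).
\]
Since the Euclidean ball enters at the scale $\sqrt n$, for which $\sqrt n\,\kappa_n^{1/n}$ is an absolute constant, its presence contributes, after an $n$-th root, only a bounded additive term, which is swallowed using $\w\ge1$. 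Undoing the comparison with the sandwich and with $\vol(M)\le\w(M)$ then yields $\w(K+TL)^{1/n}\le C\bigl(\w(K)^{1/n}+\w(L)^{1/n}\bigr)$.

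The hard part is exactly that the enlarging ball $\sqrt n\,B^n_2$ is \emph{not} $\SL(n)$-invariant, so the position of $L$ must be chosen with care. A position merely minimising $\vol(K+TL)$ need not keep $\vol\bigl(TL+\sqrt n\,B^n_2\bigr)$ comparable to $\vol\bigl(L+\sqrt n\,B^n_2\bigr)\asymp\w(L)$: a careless $T$ (for instance one forcing $T\mathcal E_L$ to be homothetic to a very eccentric $\mathcal E_K$) can inflate the enlarged volume by a genuinely large factor. The delicate step is therefore to calibrate $T$ at the Gaussian scale $\sqrt n$---rounding $L$ just enough that, by the isoperimetric monotonicity above, its contribution to the enlarged sum is controlled by $\w(L)$ while that of the fixed $K$ stays controlled by $\w(K)$---and to check that a single absolute constant survives the two regimes $\vol(L)^{1/n}\lessgtr\sqrt n\,\kappa_n^{1/n}$. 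This balancing, rather than the (by now routine) passage between $\w$ and volume, is where the real work lies.
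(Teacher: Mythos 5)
Your route is genuinely different from the paper's. The paper never compares $\w$ with a volume at all: it writes $\f{K}{B^n_2}{\u}=e^{-\pi|\cdot|^2}\star\chi_{_K}$ (Lemma~\ref{lem:Asplund}), applies the Klartag--Milman functional reverse Brunn--Minkowski (Theorem~\ref{t:Klartag_Milman}) to the two log-concave functions $\f{K}{E}{u}$ and $\f{L}{E}{u}$, and then uses Lemma~\ref{l:asplund_fg} together with $e^{-u(|\cdot|_E)}\star e^{-u(|T(\cdot)|_E)}\geq e^{-u(|\cdot|_E)}$ to show that the Asplund product of the repositioned functions dominates $\f{K+T^{-1}L}{E}{u}$. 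Your sandwich, by contrast, is a clean and correct piece of work: both halves of
\[
\tfrac12\,\w(M)\leq\vol\bigl(M+\sqrt{n}\,B^n_2\bigr)\leq e^{\pi n}\w(M)
\]
check out (the tail computations $2\pi\int_{\sqrt n}^{\infty}te^{-\pi t^2}\dlat t=e^{-\pi n}$ and $2\pi\int_0^\infty t^{n+1}e^{-\pi t^2}\dlat t=\kappa_n^{-1}$ are right, and $n^{-n/2}\kappa_n^{-1}=\Gamma(\frac n2+1)/(\pi n)^{n/2}\leq 1$ for all $n$), so $\w(M)^{1/n}$ and $\vol(M+\sqrt n\,B^n_2)^{1/n}$ do agree up to absolute constants. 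This reduction is a nice observation that the paper does not make.

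The gap is in the second half, and you flag it yourself: you end by saying that calibrating $T$ so that both $\vol(TL+\sqrt n\,B^n_2)$ and $\vol(K+\sqrt n\,B^n_2)$ stay controlled ``is where the real work lies'' --- and then you do not do that work. The covering-number/$M$-ellipsoid discussion does not resolve it, so as written the proof is incomplete at its crucial step. The frustrating part is that the difficulty you describe dissolves if you apply the reverse Brunn--Minkowski inequality not to $K$ and $L$ but directly to the already-enlarged bodies $K'=K+\sqrt n\,B^n_2$ and $L'=L+\sqrt n\,B^n_2$: Milman's theorem (in its version for not necessarily symmetric bodies, which is exactly what Theorem~\ref{t:Klartag_Milman} specializes to for characteristic functions) gives $T\in\SL(n)$ with $\vol(K'+TL')^{1/n}\leq C\bigl(\vol(K')^{1/n}+\vol(L')^{1/n}\bigr)$, and since $0\in TB^n_2$ one has the inclusion $K+TL+\sqrt n\,B^n_2\subseteq K'+TL'$, while $\vol(TL')=\vol(L')$ because $T\in\SL(n)$. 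Chaining this with your sandwich on all three bodies finishes the proof with no positional balancing at all. So the approach is salvageable, but you must either supply this (or an equivalent) argument or concede that the proof is not complete; you should also note that the non-symmetric form of Milman's theorem is being invoked, since $K'$ and $L'$ need not be $0$-symmetric.
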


The last section of the paper is devoted to studying Rogers-Shephard type
inequalities for the classical Wills functional. Among others, we get a
section/projection Rogers-Shephard type relation for $\w(\cdot)$, as well
as an upper bound for the Wills functional of the difference body
$K-K:=K+(-K)$.

\begin{theorem}\label{t:R-S_Wills}
Let $K\in\K^n$ with $0\in K$ and let $H\in\G(n,k)$. Then
\[
\w(P_HK)\w(K\cap H^{\bot})
\leq\min\left\{\binom{n}{k}\w(K),2^{n/2}\,\w\bigl(\sqrt{2}K\bigr)\right\}.
\]
\end{theorem}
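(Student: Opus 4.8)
The plan is to reduce both inequalities to a single \emph{product body} and then exploit the integral representation~\eqref{e:representacionesWills}. Since $P_HK\subseteq H$ and $K\cap H^{\bot}\subseteq H^{\bot}$ lie in orthogonal complementary subspaces, Hadwiger's multiplicativity (Theorem~\ref{t:properties_Wills}\,i)) gives
\[
\w(P_HK)\,\w(K\cap H^{\bot})=\w(M),\qquad M:=P_HK+(K\cap H^{\bot}).
\]
Thus it suffices to bound $\w(M)$ separately by $\binom{n}{k}\w(K)$ and by $2^{n/2}\w(\sqrt2K)$. Throughout I would work with the second representation in~\eqref{e:representacionesWills}, namely $\w(C)=2\pi\int_0^{\infty}\vol(C+tB^n_2)\,t\,e^{-\pi t^2}\dlat t$, which turns statements about $\w$ into statements about the one–parameter family of outer parallel bodies $C+tB^n_2$.

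For the first bound I would argue pointwise in $t$. Writing $tB^n_2\subseteq tB^k_2+tB^{n-k}_2$ (any $z$ with $|z|\le t$ splits into its $H$– and $H^{\bot}$–components, each of norm $\le t$) and using the product structure of $M$,
\[
\vol\bigl(M+tB^n_2\bigr)\le\vol_k\bigl(P_HK+tB^k_2\bigr)\,\vol_{n-k}\bigl((K\cap H^{\bot})+tB^{n-k}_2\bigr).
\]
Since $P_HK+tB^k_2=P_H(K+tB^n_2)$ and $(K\cap H^{\bot})+tB^{n-k}_2\subseteq(K+tB^n_2)\cap H^{\bot}$, the classical Rogers--Shephard section/projection inequality applied to the convex body $K+tB^n_2$ (which contains the origin) bounds the right–hand side by $\binom{n}{k}\vol(K+tB^n_2)$. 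Integrating the resulting estimate $\vol(M+tB^n_2)\le\binom{n}{k}\vol(K+tB^n_2)$ against $2\pi t\,e^{-\pi t^2}$ yields $\w(M)\le\binom{n}{k}\w(K)$.

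For the second bound I would first rewrite the target. The substitution $x\mapsto x/\sqrt2$ in the first representation of~\eqref{e:representacionesWills} gives $2^{n/2}\w(\sqrt2K)=2^n\int_{\R^n}e^{-2\pi d(x,K)^2}\dlat x=:2^n\,\w_2(K)$, where the auxiliary functional $\w_2$ is again multiplicative over orthogonal sums and satisfies $\w_2(C)=4\pi\int_0^{\infty}\vol(C+tB^n_2)\,t\,e^{-2\pi t^2}\dlat t$. Because $d(y,K)\le d(y,K\cap H^{\bot})$ for $y\in H^{\bot}$, one has $\w(K\cap H^{\bot})\le\int_{H^{\bot}}e^{-\pi d(y,K)^2}\dlat y$, so it would suffice to prove the functional inequality $\w(P_HK)\int_{H^{\bot}}e^{-\pi d(y,K)^2}\dlat y\le 2^n\int_{\R^n}e^{-2\pi d(x,K)^2}\dlat x$; equivalently, matching Gaussian weights through $t=\sqrt2\,\rho$, the bound $\w(M)\le 2^n\w_2(K)$ follows from the pointwise volumetric estimate
\[
\vol\bigl(M+\sqrt2\,\rho\,B^n_2\bigr)\le 2^n\,\vol\bigl(K+\rho\,B^n_2\bigr)\qquad(\rho\ge 0),
\]
which I would take as the core geometric step.

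This last estimate is the main obstacle. The naive attempt — bounding each factor by a Steiner comparison, $\vol_k(P_HK+\sqrt2\rho B^k_2)\le2^{k/2}\vol_k(P_HK+\rho B^k_2)$ (valid since $2^{j/2}\le2^{k/2}$ for $j\le k$ and all quermassintegrals are nonnegative), and likewise in $H^{\bot}$, followed by Rogers--Shephard on $K+\rho B^n_2$ — produces only $2^{n/2}\binom{n}{k}\vol(K+\rho B^n_2)$, overshooting the required $2^n$ by the factor $\binom{n}{k}2^{-n/2}\ge1$. The difficulty is genuine: for the simplex (where Rogers--Shephard is an equality) the product of section and projection already equals $\binom{n}{k}\vol(K)$, so the clean constant $2^n$ cannot arise from separately spending a factor $2^{n/2}$ on the parallel body and a factor $\binom{n}{k}$ on the section/projection split. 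Closing the gap therefore requires handling the section and the projection \emph{simultaneously} — a single Rogers--Shephard type inequality with an $L^2$ (self–correlation) right–hand side, rather than the product of two independent estimates — exploiting that the super-level sets $\{e^{-\pi d(\cdot,K)^2}\ge s\}=K+rB^n_2$ form a nested family of convex bodies.
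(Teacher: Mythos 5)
Your proof of the first bound is correct and takes a genuinely different route from the paper's. The paper obtains $\w(P_HK)\,\w(K\cap H^{\bot})\leq\binom{n}{k}\w(K)$ by applying the functional Rogers--Shephard inequality \eqref{e:R-S_f_log-concave1} to the log-concave function $\f{K}{B_2^n}{\u}$, together with Lemma~\ref{l:P_Hf_K=f_P_HK} (this is Proposition~\ref{t:R-S_Wills_gen}). You instead combine Hadwiger's multiplicativity with the representation $\w(C)=2\pi\int_0^{\infty}\vol(C+tB^n_2)\,t\,e^{-\pi t^2}\dlat t$ and apply the \emph{classical set-level} Rogers--Shephard section/projection inequality to each outer parallel body $K+tB^n_2$; the inclusions $M+tB^n_2\subseteq P_H(K+tB^n_2)+\bigl((K+tB^n_2)\cap H^{\bot}\bigr)$ that you use are all valid, so this gives the first bound with only the classical inequality as input --- arguably more elementary than the functional machinery, at the price of not extending to the generalized functional $\w_u(\,\cdot\,;E)$ as directly.

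The second bound, however, is not proved. You reduce $\w(M)\leq 2^{n/2}\w(\sqrt2\,K)$ to the volumetric estimate $\vol(M+\sqrt2\,\rho B^n_2)\leq 2^n\vol(K+\rho B^n_2)$, call it ``the core geometric step'', and then explain why your available tools miss it by a factor $\binom{n}{k}2^{-n/2}$ --- but you never establish it, so the argument stops exactly where the theorem still needs proving. The missing ingredient is precisely the weighted functional Rogers--Shephard inequality \eqref{e:R-S_f_log-concave2} of \cite{AA}: applied with $\lambda=1/2$ to $f=\f{K}{B_2^n}{\u}$, the prefactor $(1-\lambda)^k\lambda^{n-k}=2^{-n}$ supplies the constant, and the powers $(P_Hf)^{1/2}$ and $f^{1/2}$ become, after rescaling by $\sqrt{2}$ in $H$ and $H^{\bot}$ separately, the Wills functionals of $\tfrac{1}{\sqrt2}P_HK$ and $\tfrac{1}{\sqrt2}(K\cap H^{\bot})$; this yields $2^{-n/2}\,\w\bigl(\tfrac{1}{\sqrt2}P_HK\bigr)\w\bigl(\tfrac{1}{\sqrt2}K\cap H^{\bot}\bigr)\leq\w(K)$, and replacing $K$ by $\sqrt2\,K$ gives the stated bound. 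This is exactly the ``single Rogers--Shephard type inequality handling section and projection simultaneously'' that you correctly guess is needed; without it (or an equivalent substitute, which you do not supply) the second half of the theorem remains open in your write-up.
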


\begin{theorem}\label{t:R-S_Wills_K,L}
Let $K, L\in\K^n$. Then
\[
\w\left(K\cap L\right)\w\left(\frac{K-L}{2}\right)\leq 2^n\,\w(K)\w(L).
\]
In particular we get
\begin{equation}\label{e:R-S_K-K1}
\w(K-K)\leq 2^n\,\w(2K).
\end{equation}
\end{theorem}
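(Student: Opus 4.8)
The plan is to deduce Theorem~\ref{t:R-S_Wills_K,L} from the section--projection inequality of Theorem~\ref{t:R-S_Wills}, applied not to $K$ or $L$ themselves but to the product body $K'\times L'\subset\R^{2n}$ in doubled dimension, where $K'=K/\sqrt2$ and $L'=L/\sqrt2$. The starting point is Hadwiger's product property (Theorem~\ref{t:properties_Wills}~i)): identifying $\R^{2n}=\R^n\times\R^n$ and placing the two factors in orthogonal copies of $\R^n$, one has $\w(A\times B)=\w(A)\w(B)$ for $A,B\in\K^n$. Thus $\w(K'\times L')=\w(K')\w(L')$, while $\w\bigl(\sqrt2(K'\times L')\bigr)=\w(\sqrt2\,K')\w(\sqrt2\,L')=\w(K)\w(L)$, which is exactly the right-hand side we are aiming for once we extract the correct term of the minimum.

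First I would record two reductions. Since $\w$ is invariant under rigid motions and since $\tfrac{(K+t)-(L+t)}{2}=\tfrac{K-L}{2}$ and $(K+t)\cap(L+t)=(K\cap L)+t$ for every $t\in\R^n$, all four quantities in the statement are unchanged under a common translation of $K$ and $L$; assuming $K\cap L\neq\emptyset$ (otherwise the left-hand side vanishes, interpreting $\w(\emptyset)=0$), I translate so that $0\in K\cap L$, hence $0\in K'\cap L'$ and $0\in K'\times L'$, as required by Theorem~\ref{t:R-S_Wills}. Next, in $\R^{2n}$ I take $H=\{(v,-v):v\in\R^n\}\in\G(2n,n)$, so that $H^{\bot}=\{(v,v):v\in\R^n\}$. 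A direct computation gives $(K'\times L')\cap H^{\bot}=\{(v,v):v\in K'\cap L'\}$ and $P_H(K'\times L')=\{\tfrac12(u,-u):u\in K'-L'\}$. Identifying $H^{\bot}$ and $H$ with $\R^n$ through the linear isometries $(v,\pm v)\mapsto\sqrt2\,v$, these two sets become $\sqrt2(K'\cap L')=K\cap L$ and $\tfrac1{\sqrt2}(K'-L')=\tfrac{K-L}{2}$, respectively; this is precisely where the factor $\sqrt2$ produced by the (anti)diagonal embeddings is absorbed by the choice $K'=K/\sqrt2$, $L'=L/\sqrt2$.

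With these identifications, Theorem~\ref{t:R-S_Wills} for the body $K'\times L'$ and the subspace $H$ (applied in $\R^{2n}$, so that $\binom{n}{k}$ becomes $\binom{2n}{n}$ and the factor $2^{n/2}$ becomes $2^{2n/2}=2^{n}$) reads
\[
\w(K\cap L)\,\w\!\left(\frac{K-L}{2}\right)\le\min\left\{\binom{2n}{n}\w(K')\w(L'),\,2^{n}\w(\sqrt2\,K')\w(\sqrt2\,L')\right\}\le 2^{n}\w(K)\w(L),
\]
which is the desired inequality. The ``in particular'' statement then follows by taking $L=K$, which gives $\w\bigl(\tfrac{K-K}{2}\bigr)\le 2^{n}\w(K)$ after cancelling $\w(K)>0$, and then replacing $K$ by $2K$. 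The only genuinely delicate point is the scaling bookkeeping: one must anticipate that the orthogonal section and projection along the (anti)diagonal each rescale the relevant body by $\sqrt2$, and therefore pre-scale $K,L$ by $1/\sqrt2$ so that the output lands exactly on $K\cap L$ and $(K-L)/2$ with the clean constant $2^{n}$; the rest is routine, namely selecting the second term of the minimum in Theorem~\ref{t:R-S_Wills} and disposing of the translation hypothesis and the empty-intersection case.
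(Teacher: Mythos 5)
Your argument is correct, and it reaches the stated inequality by a genuinely different route from the paper's. The paper proves the more general Theorem~\ref{t:RS_Wills_K,L_1} (valid for every $\lambda\in(0,1)$, the statement above being the case $\lambda=1/2$) by applying the difference-function inequality \eqref{e:diff_function} to rescaled copies of $e^{-\pi d(\,\cdot\,,K)^2}$ and $e^{-\pi d(\,\cdot\,,L)^2}$ and computing the $\lambda$-difference function explicitly as an Asplund product, using the semigroup identity $e^{-\frac{\pi}{1-\lambda}|\,\cdot\,|^2}\star e^{-\frac{\pi}{\lambda}|\,\cdot\,|^2}=e^{-\pi|\,\cdot\,|^2}$. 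You instead tensorize: you apply the already established section/projection inequality of Theorem~\ref{t:R-S_Wills} in $\R^{2n}$ to the product body $(K/\sqrt{2})\times(L/\sqrt{2})$ and the antidiagonal $H$, exploiting the two features that make this classical trick work for $\w(\cdot)$, namely its independence of the ambient dimension and Hadwiger's multiplicativity over orthogonal summands (Theorem~\ref{t:properties_Wills}~i)). Your bookkeeping checks out: the section and projection along the diagonals become exactly $K\cap L$ and $(K-L)/2$ after the $1/\sqrt{2}$ pre-scaling, the second term of the minimum in dimension $2n$ reads $2^{n}\,\w\bigl(\sqrt{2}\,(K/\sqrt{2})\times(L/\sqrt{2})\bigr)=2^{n}\w(K)\w(L)$, and there is no circularity since Theorem~\ref{t:R-S_Wills} is proved independently of the statement at hand. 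What your route buys is economy (no difference-function machinery) and a slightly more careful treatment of the degenerate cases (translation to achieve $0\in K\cap L$, empty intersection); what it gives up is the full one-parameter family of Theorem~\ref{t:RS_Wills_K,L_1}, which by your method would require the $\lambda$-weighted estimate \eqref{e:R-S_f_K_lambda} applied along a tilted diagonal. Your deduction of \eqref{e:R-S_K-K1} by taking $L=K$, cancelling $\w(K)\geq 1$, and replacing $K$ by $2K$ is the intended one.
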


\section{The generalized Wills functional. Some preliminary results}\label{s:basic}

The integral expressions \eqref{e:representacionesWills} showed by
Hadwiger in \cite{Ha75} have turned out crucial in many respects.
Recently, Kampf \cite{K} proved certain generalizations of them when the
`distance' $d_E(x,K)$, between $x\in\R^n$ and $K$, relative to a convex
body $E$ with $0\in\inter E$, is considered, i.e., for
\begin{equation}\label{e:d_E(x,K)}
d_E(x,K)=\min_{y\in K}|x-y|_E=\min\{t\geq 0:x\in K+tE\}
\end{equation}
(see also \cite{HCY15} for the more general case in which the assumption
$0\in\inter E$ is not required). He showed that
\begin{equation}\label{e:representacionesWills_E}
\int_{\R^n}\!\!e^{-\pi d_E(x,K)^2}\dlat
x=2\pi\!\int_0^{\infty}\!\!\vol(K+tE)\,te^{-\pi t^2}\dlat t
=\!\sum_{i=0}^n\!\binom{n}{i}\frac{\W_i(K;E)}{\kappa_i}.
\end{equation}
We observe that $f(x)=e^{-\pi d_E(x,K)^2}$ is a {\it log-concave function}
because the distance function $d_E(\,\cdot,K)$ is convex (cf. e.g.
\cite[Lemma 1.5.9]{Sch}); we recall that $f:\R^n\longrightarrow\R_{\geq
0}$ is said to be log-concave if
\[
f\bigl((1-\lambda)x+\lambda y\bigr)\geq
f(x)^{1-\lambda}f(y)^{\lambda}\quad \text{ for any }\lambda\in(0,1)
\;\text{ and all }\; x,y\in\R^n,
\]
or equivalently, if it is of the form
\[
f(x)=e^{-u(x)}
\]
for $u:\R^n\longrightarrow\R\cup\{\infty\}$ a convex function.

The study of log-concave functions has become very important in
the recent years, among others in the study of problems related to
the distribution of mass in a convex body (we refer the reader,
for instance, to \cite{G,KM} and the references therein). Notice
also that convex bodies are contained inside the class of
log-concave functions via either the negative exponential of the
Minkowski functional $|\,\cdot|_K$ of a convex body $K$ (whose
integral is, up to a constant, the volume), or its characteristic
function, which we will denote throughout the paper by
$\chi_{_K}$, i.e.,
\[
\chi_{_K}(x)=\left\{
\begin{array}{ll}
1 & \text{ if }x\in K,\\
0 & \text{ otherwise.}
\end{array}
\right.
\]

Going back to the integral representation
\eqref{e:representacionesWills_E}, we observe that a more general
functional can be obtained by replacing $e^{-\pi t^2}$ by another function
$G(t)$ properly associated to a log-concave measure $\mu$ on the
non-negative real line $\R_{\geq0}$. Thus, let $\mu$ be the measure on
$\R_{\geq 0}$ given by $\dlat\mu(t)=\phi(t)\,\dlat t$, where
$\phi:\R_{\geq 0}\longrightarrow\R_{\geq 0}$ is log-concave. Then, the
associated function $G(t)=\mu\bigl([t,\infty)\bigr)$ is decreasing and
log-concave, and hence $G(t)=e^{-u(t)}$ for a convex function $u:\R_{\geq
0}\longrightarrow\R\cup\{\infty\}$ which is increasing and, without loss
of generality, can be assumed to be continuous in $\bigl\{t\in\R_{\geq
0}:u(t)\neq\infty\bigr\}$. Moreover, if $\mu$ is a probability measure,
then $G(0)=1$, i.e., $u(0)=0$.

Thus, we may consider the more general expression
\begin{equation*}\label{e:f(KEmu)}
\f{K}{E}{u}(x)=G\bigl(d_E(x,K)\bigr)=e^{-u\bigl(d_E(x,K)\bigr)} \quad
\text{ for }\; K,E\in\K^n,
\end{equation*}
where $u:\R_{\geq 0}\longrightarrow\R\cup\{\infty\}$ satisfies the above
assumptions (monotonicity, continuity and convexity). From now on, and for
the sake of brevity, we will denote by $\co(\R_{\geq 0})$ the family of
all convex functions $u:\R_{\geq 0}\longrightarrow\R\cup\{\infty\}$ which
are continuous in $\bigl\{t\in\R_{\geq 0}:u(t)\neq\infty\bigr\}$ and
increasing.

The desired outcome that integration of our function $\f{K}{E}{u}$
provides a Steiner type formula (with weights) indeed holds (see also
\cite[Proposition~3]{K} and \cite[Lemma~1.1]{HCY15}).

\begin{proposition}\label{lem:ComputationIntegral}
Let $K,E\in\K^n$ with $0\in\inter E$ and let $u\in\co(\R_{\geq 0})$ be
strictly increasing. Then
\begin{equation}\label{e:ComputationIntegral}
\int_{\R^n}\f{K}{E}{u}(x)\,\dlat x
=\int_{u(0)}^\infty\vol\bigl(K+u^{-1}(s)E\bigr)e^{-s}\,\dlat s
=\sum_{i=0}^n\binom{n}{i}\m_i^u\,\W_i(K;E),
\end{equation}
where
\[
\m_i^u=\displaystyle\int_{u(0)}^{\infty}u^{-1}(s)^ie^{-s}\,\dlat s.
\]
\end{proposition}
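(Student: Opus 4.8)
The plan is to establish the two equalities in \eqref{e:ComputationIntegral} separately, using the layer-cake (distribution function) representation of the integral for the first, and the Steiner formula \eqref{e:Steinerform} for the second.

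First I would prove the equality between the left-hand integral and the middle one. Since $\f{K}{E}{u}(x)=e^{-u(d_E(x,K))}$ is a non-negative function, I would write its integral via the layer-cake formula,
\[
\int_{\R^n}\f{K}{E}{u}(x)\,\dlat x
=\int_0^\infty\vol\bigl(\{x\in\R^n:\f{K}{E}{u}(x)\geq r\}\bigr)\,\dlat r.
\]
The key observation is that, because $u$ is increasing and $G(t)=e^{-u(t)}$ is decreasing, the superlevel set $\{x:e^{-u(d_E(x,K))}\geq r\}$ coincides with $\{x:d_E(x,K)\leq u^{-1}(-\log r)\}$, and by the second description of $d_E$ in \eqref{e:d_E(x,K)} this is exactly $K+u^{-1}(-\log r)E$ (using $0\in\inter E$ so that the level sets are the Minkowski dilates, and $u^{-1}$ is well-defined and continuous since $u$ is strictly increasing). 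A substitution $s=-\log r$, equivalently $r=e^{-s}$, then turns the $r$-integral over $(0,1]$ into the $s$-integral $\int_{u(0)}^\infty\vol\bigl(K+u^{-1}(s)E\bigr)e^{-s}\,\dlat s$, where the lower limit $u(0)$ appears because $d_E(x,K)\geq 0$ forces the dilation parameter to be non-negative, i.e. $s\geq u(0)$. I would need to check carefully that the range $r\in(0,1]$ matches $s\in[u(0),\infty)$ and that values $r>1$ (if $u(0)<0$) or the behavior where $u=\infty$ contribute nothing extra.

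Next I would insert the Steiner formula \eqref{e:Steinerform}, namely $\vol(K+u^{-1}(s)E)=\sum_{i=0}^n\binom{n}{i}\W_i(K;E)u^{-1}(s)^i$, into the middle integral. Interchanging the finite sum with the integral (justified by non-negativity of each term, or by absolute integrability, via Tonelli/Fubini) yields
\[
\int_{u(0)}^\infty\vol\bigl(K+u^{-1}(s)E\bigr)e^{-s}\,\dlat s
=\sum_{i=0}^n\binom{n}{i}\W_i(K;E)\int_{u(0)}^\infty u^{-1}(s)^ie^{-s}\,\dlat s,
\]
and the remaining integral is precisely $\m_i^u$, giving the third expression and completing the chain of equalities.

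The main obstacle I expect is the rigorous justification of the level-set identification and the change of variables rather than the algebra of the Steiner expansion. Specifically, one must ensure $u^{-1}$ is genuinely well-defined (this is where strict monotonicity of $u$ is used), handle the endpoint/limiting behavior at $s=u(0)$ and at $s\to\infty$, and confirm convergence of the integrals $\m_i^u$ so that the interchange of sum and integral is legitimate; the hypothesis $0\in\inter E$ is what guarantees that the sublevel sets of $d_E(\cdot,K)$ are exactly the dilates $K+u^{-1}(s)E$, so that the Steiner polynomial applies directly.
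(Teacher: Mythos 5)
Your proposal is correct and follows essentially the same route as the paper: the layer-cake formula $\int f=\int_0^\infty\vol(\{f\geq r\})\,\dlat r$ with the substitution $r=e^{-s}$ is exactly the paper's Fubini step (the paper writes $e^{-u(d_E(x,K))}=\int_{u(d_E(x,K))}^\infty e^{-s}\,\dlat s$ and swaps the order of integration, which amounts to the same computation parametrized by $s$ instead of $r$), and the identification of the sublevel sets with $K+u^{-1}(s)E$ via strict monotonicity of $u$ and \eqref{e:d_E(x,K)}, followed by the Steiner expansion and the interchange of the finite sum with the integral, is identical. The endpoint bookkeeping you flag ($r$ ranging over $(0,e^{-u(0)}]$ so that $s\in[u(0),\infty)$) is handled automatically in the paper's parametrization and poses no real obstacle.
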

We notice that when dealing with a differentiable function $u$, the
numbers $\m_i^u$ are nothing else but the {\it moments} of the measure
$\mu$ associated to $u$, i.e., such that
$G(t)=\mu\bigl([t,\infty)\bigr)=e^{-u(t)}$:
\begin{equation}\label{e:m_i^u}
\m_i^u=\int_{u(0)}^{\infty}u^{-1}(s)^ie^{-s}\,\dlat s=\int_{0}^\infty
t^i\,\dlat\mu(t).
\end{equation}

\begin{proof}[Proof of Proposition \ref{lem:ComputationIntegral}]
Using Fubini's theorem we get
\[
\begin{split}
\int_{\R^n}\f{K}{E}{u}(x)\,\dlat x  &
    =\int_{\R^n}e^{-u\bigl(d_E(x,K)\bigr)}\dlat x
    =\int_{\R^n}\int_{u(d_E(x,K))}^{\infty}e^{-s}\,\dlat s\,\dlat x\\
 & =\int_{u(0)}^{\infty}e^{-s}\int_{\R^n}\chi_{\{y\in\R^n:u(d_E(y,K))\leq s\}}(x)\,\dlat x\,\dlat s.
\end{split}
\]
Now, the strict monotonicity of $u(t)$ ensures the existence of the
inverse $u^{-1}(s)$, and hence, by \eqref{e:d_E(x,K)},
\[
\begin{split}
\Bigl\{x\in\R^n:u\bigl(d_E(x,K)\bigr)\leq s\Bigr\} &
    =\bigl\{x\in\R^n:d_E(x,K)\leq u^{-1}(s)\bigr\}\\
 & =K+u^{-1}(s)E.
\end{split}
\]
Thus, via the Steiner formula \eqref{e:Steinerform} we can conclude that
\[
\begin{split}
\int_{\R^n}\f{K}{E}{u}(x)\,\dlat x  &
    =\int_{u(0)}^{\infty}e^{-s}\int_{\R^n}\chi_{\{y\in\R^n:u(d_E(y,K))\leq s\}}(x)\,\dlat x\,\dlat s\\
 & =\int_{u(0)}^{\infty}\vol\Bigl(K+u^{-1}(s)E\Bigr)e^{-s}\,\dlat s\\
 & =\int_{u(0)}^{\infty}\sum_{i=0}^n\binom{n}{i}\W_i(K;E)u^{-1}(s)^{i}e^{-s}\,\dlat s\\
 & =\sum_{i=0}^n\left[\binom{n}{i}\W_i(K;E)\int_{u(0)}^{\infty}u^{-1}(s)^{i}e^{-s}\,\dlat s\right].\qedhere
\end{split}
\]
\end{proof}

\begin{example}
(i) The classical Wills functional $\w(K)$ is obtained for
$\f{K}{B_2^n}{\u}$ (cf. \eqref{e:FWills}) because, clearly, the moments
\begin{equation}\label{e:m_i_u_0}
\m_i^{\u}=\frac{1}{\kappa_i}.
\end{equation}

\noindent (ii) In the case of the function
$u_p(t)=\bigl(2\Gamma(1+1/p)t\bigr)^p$, $p>1$, we have
\[
\int_{u_p(0)}^{\infty}u_p^{-1}(s)^{i}e^{-s}\,\dlat s
    =\int_0^{\infty}\frac{s^{i/p}}{2^i\Gamma\bigl(1+\tfrac{1}{p}\bigr)^i}\,e^{-s}\,\dlat s
    =\frac{\Gamma\left(1+\frac{i}{p}\right)}{\left(2\Gamma\bigl(1+\frac{1}{p}\bigr)\right)^i}=\frac{1}{\vol_i(B_p^i)},
\]
where $B^i_p$ is the unit $p$-ball associated to the $p$-norm
$|x|_p=\bigl(\sum_{i=1}^n|x_i|^p\bigr)^{1/p}$. So, we~get
\[
\int_{\R^n}\f{K}{E}{u_p}(x)\,\dlat
x=\sum_{i=0}^n\binom{n}{i}\frac{1}{\vol_i(B_p^i)}\W_i(K;E).
\]
\end{example}

As mentioned in the introduction, $\w(K)$ depends only on the convex body,
but not on the dimension of the embedding space. However, in the most
general case of the function $\f{K}{E}{u}$, it does not come true anymore.
Therefore, we need a special notation which allows us to distinguish the
dimension in which the Wills functional is computed, as well as the
involved sets. Thus, if $K,E\in\K^n$ with $\dim K=\dim E=k$ and we compute
the extended Wills functional in $\R^k$ (assuming that both $K,E$ lie in
the same affine $k$-dimensional space), we will write
\[
\wk{k}{u}(K;E):=\int_{\R^k}\f{K}{E}{u}(x)\,\dlat x,
\]
whereas the $n$-dimensional functional will be denoted just by
$\w_{u}(K;E)$. Moreover, we will keep the classical notation for the usual
Wills functional, i.e., $\w(K):=\w_{\u}(K;B_2^n)$.

%
%

\subsection{Some basics on (log-concave) functions}

Since log-concave functions are the keystone of the coming development, we
need to recall some background.

A first major result for (measurable) functions is the so-called {\it
Borell-Brascamp-Lieb inequality} (see e.g. \cite[Theorem~10.1]{G}):

\begin{theo}[Borell-Brascamp-Lieb's inequality]\label{t:BBL}
Let $\lambda\in(0,1)$ be fixed, let $-1/n\leq p\leq\infty$ and let
$f,g,h:\R^n\longrightarrow\R_{\geq0}$ be non-negative measurable functions
such that, for any $x,y\in\R^n$ with $f(x)g(y)>0$,
\begin{equation*}
h\bigl((1-\lambda) x+\lambda y\bigr)\geq \bigl((1-\lambda)f(x)^p+\lambda
g(y)^p\bigr)^{1/p}.
\end{equation*}
Then
\begin{equation*}\label{e:BBL}
\int_{\R^n}h(x)\,\dlat x\geq \left[(1-\lambda)\left(\int_{\R^n}f(x)\,\dlat
x\right)^q+\lambda\left(\int_{\R^n}g(x)\,\dlat x\right)^q \right]^{1/q},
\end{equation*}
provided that $f$ and $g$ have non-zero integrals, and where $q=p/(np+1)$.
\end{theo}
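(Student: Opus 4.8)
The plan is to prove the inequality by induction on the dimension $n$, along the classical route that yields Pr\'ekopa--Leindler (the case $p=0$) and Brunn--Minkowski as special cases. The mechanism that makes the induction work is that the exponent is \emph{self-improving}: one integration in a single variable transforms an input exponent $\alpha$ into the output exponent $\alpha/(\alpha+1)$, and iterating this one-dimensional recursion $n$ times starting from $p$ produces precisely $q=p/(np+1)$, since the map $\alpha\mapsto\alpha/(\alpha+1)$ sends $p/(kp+1)$ to $p/((k+1)p+1)$. I would also isolate at the outset the bookkeeping observation that the admissible range $p\geq-1/n$ is exactly what guarantees that every intermediate exponent occurring in the induction stays $\geq-1$ (and every denominator stays positive), so that the one-dimensional case always applies.

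First I would settle the base case $n=1$ by a mass-transport argument. Assuming $\int_\R f$ and $\int_\R g$ are finite and strictly positive (if either vanishes the right-hand side is $0$ under the usual conventions for the $q$-mean, so the inequality is trivial), define increasing maps $u,v\colon(0,1)\to\R$ by $\int_{-\infty}^{u(\theta)}f=\theta\int_\R f$ and $\int_{-\infty}^{v(\theta)}g=\theta\int_\R g$, so that $f(u(\theta))\,u'(\theta)=\int_\R f$ and $g(v(\theta))\,v'(\theta)=\int_\R g$ almost everywhere. Writing $w(\theta)=(1-\lambda)u(\theta)+\lambda v(\theta)$, which is increasing, the change of variables together with the hypothesis applied at $x=u(\theta)$, $y=v(\theta)$ gives
\[
\int_\R h\geq\int_0^1 h\bigl(w(\theta)\bigr)w'(\theta)\,\dlat\theta
\geq\int_0^1\bigl((1-\lambda)f(u)^p+\lambda g(v)^p\bigr)^{1/p}\bigl((1-\lambda)u'+\lambda v'\bigr)\,\dlat\theta.
\]
Substituting $f(u(\theta))=\bigl(\int_\R f\bigr)/u'(\theta)$ and $g(v(\theta))=\bigl(\int_\R g\bigr)/v'(\theta)$, the whole matter reduces to the pointwise inequality
\[
\bigl((1-\lambda)a^p+\lambda b^p\bigr)^{1/p}\bigl((1-\lambda)c+\lambda d\bigr)\geq\bigl((1-\lambda)(ac)^q+\lambda(bd)^q\bigr)^{1/q},\qquad q=\frac{p}{p+1},
\]
for nonnegative $a,b,c,d$: applied with $ac=\int_\R f$ and $bd=\int_\R g$ (both constant in $\theta$), the right-hand side is constant in $\theta$, and integrating over $(0,1)$ yields exactly the claim. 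This pointwise estimate is the multiplicativity of weighted power means (it reduces, by homogeneity, to a one-variable AM--GM/H\"older check). Verifying it uniformly over the full range $-1\leq p\leq\infty$, with the limiting conventions at $p\in\{0,+\infty\}$ and the boundary cases where some of $a,b,c,d$ vanish, together with justifying the monotonicity, a.e.\ differentiability and change of variables for $u,v$, is the main technical obstacle.

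For the inductive step I would slice in the last coordinate, writing $x=(x',x_n)$ with $x'\in\R^{n-1}$. Fix $s,t\in\R$ and set $z=(1-\lambda)s+\lambda t$; then the $(n-1)$-dimensional sections $h(\cdot,z)$, $f(\cdot,s)$, $g(\cdot,t)$ satisfy the Borell--Brascamp--Lieb hypothesis with the \emph{same} exponent $p$, because for $x'_1,x'_2$ with $f(x'_1,s)g(x'_2,t)>0$ the full hypothesis evaluated at $\bigl((1-\lambda)x'_1+\lambda x'_2,\,z\bigr)$ is precisely what is required. By the inductive hypothesis in dimension $n-1$, with exponent $q_{n-1}=p/((n-1)p+1)$, the marginals $\mathcal F(s)=\int_{\R^{n-1}}f(\cdot,s)$, $\mathcal G(t)=\int_{\R^{n-1}}g(\cdot,t)$ and $\mathcal H(z)=\int_{\R^{n-1}}h(\cdot,z)$ then satisfy the one-dimensional hypothesis with exponent $q_{n-1}$. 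Applying the already-established case $n=1$ to these marginals turns $q_{n-1}$ into $q_{n-1}/(q_{n-1}+1)=p/(np+1)=q$, and Fubini's theorem identifies $\int_\R\mathcal H=\int_{\R^n}h$ (and similarly for $f$ and $g$), which closes the induction and delivers the stated inequality with the correct exponent $q$.
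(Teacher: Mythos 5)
The paper does not prove Theorem~\ref{t:BBL}: it is quoted as a known result with a citation to \cite[Theorem~10.1]{G}, so there is no in-paper proof to compare against. Your proposal is the standard (and correct) induction-on-dimension argument found in that reference --- one-dimensional case by mass transport plus the multiplicativity of weighted power means $M_p^\lambda(a,b)\,M_1^\lambda(c,d)\geq M_{p/(p+1)}^\lambda(ac,bd)$, then slicing and Fubini, with the exponent recursion $\alpha\mapsto\alpha/(\alpha+1)$ correctly producing $q=p/(np+1)$ and the range $p\geq-1/n$ correctly keeping every intermediate exponent $\geq-1$.
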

The case $p=0$ of the Borell-Brascamp-Lieb inequality is known as the {\it
Pr\'ekopa-Leindler inequality}: if $h\bigl((1-\lambda) x+\lambda
y\bigr)\geq f(x)^{1-\lambda}g(y)^{\lambda}$ then
\begin{equation}\label{e:PrekopaLeindler}
\int_{\R^n}h(x)\,\dlat x\geq\left(\int_{\R^n}f(x)\,\dlat
x\right)^{1-\lambda}\left(\int_{\R^n}g(x)\,\dlat x\right)^{\lambda}.
\end{equation}

For $p\geq 1$, we recall that the {\it $p$-norm} of a function
$f:\R^n\longrightarrow\R_{\geq 0}$ is defined as
\[
\|f\|_p=\left(\int_{\R^n}f(x)^p\dlat x\right)^{1/p},
\]
where the case $p=\infty$ has to be understood as
\[
\|f\|_{\infty}=\sup_{x\in\R^n}f(x).
\]
Next we set the definition of three important operations for functions:
convolution, Asplund product and difference function.

\begin{definition}
The {\em Asplund product} of two functions
$f,g:\R^n\longrightarrow\R_{\geq 0}$ is given by
\[
(f\star g)(z)=\sup_{z=x+y}f(x)g(y),
\]
and their {\em convolution} is defined as
\[
(f*g)(z)=\int_{\R^n}f(x)g(z-x)\,\dlat x.
\]
For $\lambda\in(0,1)$, the {\em $\lambda$-difference function} associated
with $f$ and $g$ is given by
\[
\Delta_{\lambda}^{f,g}(z)=\sup_{z=(1-\lambda)x+\lambda y}
f\left(\frac{x}{1-\lambda}\right)^{1-\lambda}g\left(\frac{-y}{\lambda}\right)^{\lambda}.
\]
\end{definition}
Notice that in the case of characteristic functions of two convex bodies
$K,L\in\K^n$, the convolution
$\chi_{_K}*\chi_{_L}=\vol\bigl(K\cap(\,\cdot-L)\bigr)$ whereas the Asplund
product $\chi_{_K}\star\chi_{_E}=\chi_{_{K+E}}$, and so it plays the role
of the Minkowski addition of convex bodies in the setting of log-concave
functions. In the last decades, many efforts have been made in order to
extend geometric inequalities for convex bodies to the more general
setting of log-concave functions.

In \cite[Theorem~1.8]{AEFO} it is proved that if
$f,g:\R^n\longrightarrow\R_{\geq 0}$ are log-concave functions and
$\lambda\in(0,1)$, then the following Rogers-Shephard type inequality
holds:
\begin{equation}\label{e:diff_function}
\int_{\R^n}f(x)^{\lambda}g(x)^{1-\lambda}\,\dlat x
\int_{\R^n}\Delta_{\lambda}^{f,g}(z)\,\dlat z \leq\int_{\R^n}f(x)\,\dlat
x\int_{\R^n}g(y)\,\dlat y.
\end{equation}
Moreover (see \cite[Theorem~2.1]{AGJV}),
\begin{equation}\label{eq:RSfunctions}
\|f*g\|_\infty\int_{\R^n}(f\star g)(x)\,\dlat
x\leq\binom{2n}{n}\|f\|_\infty \|g\|_\infty\int_{\R^n}f(x)\,\dlat
x\int_{\R^n}g(x)\,\dlat x.
\end{equation}

Another outstanding result for log-concave functions, providing a reverse
Brunn-Minkowski inequality, was obtained by Klartag and Milman in
\cite[Theorem~1.3 and Remark (2) in page~181]{KM}:
\begin{theo}\label{t:Klartag_Milman}
Let $f,g:\R^n\longrightarrow\R_{\geq 0}$ be log-concave functions
with finite and positive integrals, such that
$\|f\|_{\infty}=\|g\|_{\infty}=f(0)=g(0)=1$. Then there exist
$T_1,T_2\in\SL(n)$ and an absolute constant $C>0$ such that
\[
\begin{split}
\biggl(\int_{\R^n}\bigl((f\circ T_1) & \star(g\circ T_2)\bigr)(x)\,\dlat x\biggr)^{1/n}\\
 & \leq C\left[\left(\int_{\R^n}(f\circ T_1)(x)\,\dlat x\right)^{1/n}
    +\left(\int_{\R^n}(g\circ T_2)(x)\,\dlat x\right)^{1/n}\right].
\end{split}
\]
\end{theo}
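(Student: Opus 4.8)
The plan is to deduce the functional inequality from the reverse Brunn--Minkowski inequality for convex bodies (Milman, \cite{Mi}), transported to the setting of log-concave functions through the classical correspondence between a log-concave function and a convex body of one higher (in fact much higher) dimension; see \cite{KM}. The point of this correspondence is that it simultaneously converts the integral $\int f$ into a (normalized) volume, the Asplund product $f\star g$ into the Minkowski sum of the associated bodies, and the $\SL(n)$-action on functions into a volume-preserving linear action on the bodies.

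First I would normalize: writing $f=e^{-u}$ and $g=e^{-v}$, the hypotheses $\|f\|_\infty=f(0)=1$ and $\|g\|_\infty=g(0)=1$ mean that $u,v$ are non-negative convex functions vanishing at the origin, where they attain their minima. The Asplund product is then governed by infimal convolution, since $(f\star g)(z)=\exp\bigl(-\inf_{x+y=z}(u(x)+v(y))\bigr)$, and infimal convolution of convex functions corresponds to Minkowski addition of their epigraphs. This is the algebraic fact that makes $\star$ behave like $+$.

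Next, for an auxiliary integer $m$ I would associate to $f$ a convex body $K_m(f)\subset\R^n\times\R^m$ whose fibre over $x\in\R^n$ is a Euclidean ball in $\R^m$ of radius a suitable power of $f(x)$, chosen so that (i) $K_m(f)$ is convex, (ii) $\vol_{n+m}(K_m(f))$ is a fixed multiple of $\int_{\R^n}f$, and (iii) $K_m(f)+K_m(g)=K_m(f\star g)$ up to controlled error. Applying Milman's convex reverse Brunn--Minkowski inequality to $K_m(f)$ and $K_m(g)$ produces volume-preserving maps and a dimension-free constant $C$; the crux is to arrange that these maps may be taken block-diagonal, acting as elements $T_1,T_2\in\SL(n)$ on the base $\R^n$ and trivially on the fibre $\R^m$ --- equivalently, that these ``bodies of revolution over the base'' admit M-ellipsoids respecting the base--fibre splitting --- so that they descend to genuine $\SL(n)$-maps on $f$ and $g$. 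One then replaces $f,g$ by $f\circ T_1,g\circ T_2$ and reads off the inequality.

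The main obstacle is precisely this transfer step, and it has two intertwined parts. First, the fibre radius profile is only approximately concave, so both the exact convexity of $K_m(f)$ and the exact identity $K_m(f)+K_m(g)=K_m(f\star g)$ require either Ball's precise construction or a careful approximation; more delicately, the auxiliary dimension $m$ and the power of $f$ entering the radius must be calibrated so that, after extracting $(n+m)$-th roots of the volumes and tracking the universal constant, one obtains exactly the $1/n$-homogeneous functional inequality rather than a trivial one. Second, one must justify the block-diagonal (base-only) choice of the optimal positions. An alternative that avoids the lift is to build an M-position for log-concave functions intrinsically --- proving that every such $f$ admits $T\in\SL(n)$ with dimension-free functional covering estimates against a fixed reference function --- and then run a covering-number argument directly on $f\star g$; this is closer to the approach of \cite{KM}, but it front-loads the very same difficulty, namely the existence of a functional M-ellipsoid with absolute constants.
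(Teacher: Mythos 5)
Your proposal cannot be checked against the paper's own argument because the paper has none: Theorem~\ref{t:Klartag_Milman} is imported verbatim from Klartag and Milman \cite{KM} (their Theorem~1.3 together with the remark on p.~181) and is used here as a black box. Judged on its own terms, what you have written is a reasonable plan aimed at the right literature, but it is not a proof, and the two obstacles you flag are genuine rather than cosmetic. The naive lift $K_m(f)=\bigl\{(x,y)\in\R^n\times\R^m:|y|\leq f(x)^{1/m}\bigr\}$ is simply not convex for a general log-concave $f$ (take $f(x)=e^{-|x|^2}$): convexity requires $f^{1/m}$ to be concave, so one must first replace $f=e^{-u}$ by its $1/m$-concave approximants $(1-u/m)_+^m$ and only recover the Asplund product in the limit $m\to\infty$. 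But Milman's theorem in dimension $n+m$ yields constants and roots of order $n+m$, so letting $m\to\infty$ degrades the $1/n$-homogeneous inequality to a triviality unless $m$ stays comparable to $n$, at which point the identification of $\star$ with Minkowski addition of the lifts is only approximate and must itself be controlled by $C^n$. More seriously, Milman's theorem produces some element of $\SL(n+m)$ with no reason to respect the base--fibre splitting; you correctly name this as the crux, but you offer no mechanism for forcing a block-diagonal position, and I do not see one.

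The standard proof, and the one behind \cite{KM}, avoids the lift entirely and stays in dimension $n$. To a log-concave $f$ with $\|f\|_\infty=f(0)=1$ one associates the level set $K_f=\{x\in\R^n:f(x)\geq e^{-n}\}$, a convex body with $c^n\vol(K_f)\leq\int_{\R^n}f\leq C^n\vol(K_f)$ for absolute $c,C>0$ (the upper bound uses the inclusion $\{f\geq e^{-s}\}\subset(s/n)K_f$ for $s\geq n$, valid by log-concavity and $f(0)=\max f$). Since $(f\star g)(x+y)\geq f(x)g(y)$, one gets $K_f+K_g\subset\{f\star g\geq e^{-2n}\}\subset 2K_{f\star g}$, the last inclusion again by log-concavity of $f\star g$ with maximum $1$ at the origin. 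Applying Milman's reverse Brunn--Minkowski inequality (equivalently, choosing M-positions) to the bodies $K_f,K_g\subset\R^n$ produces $T_1,T_2\in\SL(n)$ and, after converting volumes back to integrals at the cost of absolute constants to the power $n$, exactly the stated inequality. If you want to turn your sketch into a proof, this level-set reduction is the step to adopt: it replaces every $C^{n+m}$ bookkeeping problem in your plan by a clean $C^n$ one and dissolves the block-diagonal difficulty, because the positions are chosen in $\R^n$ from the outset.
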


We conclude this subsection recalling one last concept: the projection of
a function (see e.g. \cite{KM}).

\begin{definition}
Let $f:\R^n\longrightarrow\R_{\geq 0}$ and let $H\in\G(n,k)$. The
\emph{projection} of $f$ onto $H$ is the function
$P_Hf:H\longrightarrow\R_{\geq0}\cup\{\infty\}$ defined by
\begin{equation*}
(P_Hf)(x)=\sup_{y\in H^{\bot}}f(x+y).
\end{equation*}
\end{definition}

The geometric meaning of this definition is easy: the (strict) hypograph
of $P_Hf$ is the projection of the (strict) hypograph of $f$ onto $H$. In
particular, for $K\in\K^n$, $P_H\chi_{_K}=\chi_{_{P_HK}}$.

Regarding the projection of a function $f:\R^n\longrightarrow\R_{\geq 0}$,
very recently \cite[Theorems~1.1 and~1.2]{AA} the following
Rogers-Shephard type inequalities for log-concave (integrable) functions
have been obtained: for $H\in\G(n,k)$,
\begin{equation}\label{e:R-S_f_log-concave1}
\int_H(P_Hf)(x)\,\dlat x\int_{H^{\bot}}f(y)\,\dlat y
\leq\binom{n}{k}\|f\|_{\infty}\int_{\R^n}f(z)\,\dlat z;
\end{equation}
moreover, if $\|f\|_{\infty}=f(0)$ then, for any $\lambda\in(0,1)$,
\begin{equation}\label{e:R-S_f_log-concave2}
(1-\lambda)^k\lambda^{n-k}\int_H(P_Hf)(x)^{1-\lambda}\dlat x
\int_{H^{\bot}}f(y)^{\lambda}\dlat y \leq\int_{\R^n}f(z)\,\dlat z.
\end{equation}

\subsection{Some preliminary lemmas}

We start this subsection showing several properties for the
Asplund product that will be needed later on.

\begin{lemma}\label{l:asplund_fg}
Let $f,g:\R^n\longrightarrow\R_{\geq 0}$ be two non-negative functions,
and let $T_1,T_2\in\SL(n)$. Then, for all $x\in\R^n$,
\[
\bigl((f\circ T_1)\star(g\circ T_2)\bigr)(x)=\Bigl(f\star
\bigl(g\circ(T_2T_1^{-1})\bigr)\Bigr)\bigl(T_1(x)\bigr).
\]
\end{lemma}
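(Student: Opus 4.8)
The plan is to prove the identity directly from the definition of the Asplund product, the key point being that the invertible map $T_1$ sets up a bijection between the two families of decompositions over which the respective suprema are taken. No log-concavity, measurability or integrability is needed; this is a purely formal manipulation valid for arbitrary non-negative $f,g$.

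First I would unfold the left-hand side. By the definition of $\star$,
\[
\bigl((f\circ T_1)\star(g\circ T_2)\bigr)(x)=\sup_{x=a+b}f(T_1 a)\,g(T_2 b),
\]
where the supremum runs over all pairs $a,b\in\R^n$ with $x=a+b$.

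Next I would perform the change of variables $p=T_1 a$ and $q=T_1 b$. Since $T_1\in\SL(n)$ is invertible, the map $(a,b)\mapsto(p,q)=(T_1 a,T_1 b)$ is a bijection of $\R^n\times\R^n$; it carries the constraint $x=a+b$ into $T_1(x)=T_1 a+T_1 b=p+q$, and it gives $T_2 b=T_2 T_1^{-1}q$. Hence the general term $f(T_1 a)\,g(T_2 b)$ equals $f(p)\,g\bigl(T_2T_1^{-1}q\bigr)$, and as $(a,b)$ ranges over all decompositions of $x$ the pair $(p,q)$ ranges over exactly all decompositions of $T_1(x)$. Therefore
\[
\sup_{x=a+b}f(T_1 a)\,g(T_2 b)=\sup_{T_1(x)=p+q}f(p)\,g\bigl(T_2T_1^{-1}q\bigr)=\Bigl(f\star\bigl(g\circ(T_2T_1^{-1})\bigr)\Bigr)\bigl(T_1(x)\bigr),
\]
where the last equality is again just the definition of the Asplund product evaluated at the point $T_1(x)$.

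I do not expect any genuine obstacle here: the only thing that must be checked with care is precisely that $T_1$ being a bijection guarantees the two index sets of the suprema are in one-to-one correspondence, so that no admissible decomposition is lost or gained in passing from the first supremum to the second. Everything else is bookkeeping, and the invertibility of $T_1$ (indeed, any linear isomorphism would do) is the single ingredient that makes the argument go through.
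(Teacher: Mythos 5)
Your proof is correct and follows essentially the same route as the paper: both unfold the definition of the Asplund product and reindex the supremum via the bijection induced by $T_1$, using $T_2 b = T_2T_1^{-1}(T_1 b)$. The only cosmetic difference is that you parametrize the supremum by decompositions $x=a+b$ while the paper writes it as a supremum over $y$ with $b=x-y$; the substance is identical.
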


\begin{proof}
With the changes of variable $w=T_1y$ and $z=T_1x$, we get
\[
\begin{split}
\bigl((f\circ T_1)\star(g\circ T_2)\bigr)(x) &
    =\sup_{y\in\R^n}f(T_1y)g\bigl(T_2(x-y)\bigr)\\
 & =\sup_{w\in\R^n}f(w)g\bigl(T_2x-T_2T_1^{-1}w\bigr)\\
 & =\sup_{w\in\R^n}f(w)g\bigl(T_2T_1^{-1}(z-w)\bigr)\\
 &=\Bigl(f\star\bigl(g\circ(T_2T_1^{-1})\bigr)\Bigr)\bigl(T_1(x)\bigr).\qedhere
\end{split}
\]
\end{proof}

The function $\f{K}{E}{u}$ defining the generalized Wills functional can
be seen as the Asplund product of two log-concave functions. This property
will be helpful in the subsequent results.

\begin{lemma}\label{lem:Asplund}
Let $u\in\co(\R_{\geq 0})$ and let $K,E\in\K^n$ with $0\in\inter E$. Then
\[
\f{K}{E}{u}=e^{-u\bigl(|\,\cdot\,|_E\bigr)}\star\chi_{_K}.
\]
\end{lemma}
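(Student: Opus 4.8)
The plan is to unwind the definition of the Asplund product and show the supremum is attained in a way that reproduces $\f{K}{E}{u}$. Writing $h=e^{-u(|\cdot|_E)}$, by definition
\[
\bigl(h\star\chi_{_K}\bigr)(z)=\sup_{z=x+y}h(x)\chi_{_K}(y)
=\sup_{y\in K}h(z-y)=\sup_{y\in K}e^{-u\bigl(|z-y|_E\bigr)}.
\]
Since $u$ is increasing (as $u\in\co(\R_{\geq 0})$), maximizing $e^{-u(t)}$ over the admissible $t=|z-y|_E$ amounts to \emph{minimizing} $|z-y|_E$ over $y\in K$. First I would record that this minimal value is exactly $d_E(z,K)$: indeed, by the definition \eqref{e:d_E(x,K)} of the relative distance, $\min_{y\in K}|z-y|_E=d_E(z,K)$.

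The next step is to justify exchanging the supremum over $y$ with the monotone outer function. Because $u$ is increasing, $e^{-u(\cdot)}$ is decreasing, so
\[
\sup_{y\in K}e^{-u\bigl(|z-y|_E\bigr)}=e^{-u\bigl(\min_{y\in K}|z-y|_E\bigr)}=e^{-u\bigl(d_E(z,K)\bigr)}=\f{K}{E}{z}(z),
\]
where the last equality is the definition of $\f{K}{E}{u}$. The hypothesis $0\in\inter E$ guarantees that $|\cdot|_E$ is a genuine (finite, continuous) Minkowski functional, so that $d_E(z,K)$ is finite for every $z$ and the minimum over the compact set $K$ is attained; this is what makes the supremum in the Asplund product an honest maximum and the chain of equalities legitimate.

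I do not anticipate a serious obstacle here, since the identity is essentially definitional once the monotonicity of $u$ is invoked. The only point deserving a little care is the interchange of the increasing transformation $e^{-u(\cdot)}$ with the infimum: one should note that $u$ may take the value $\infty$, but as an element of $\co(\R_{\geq 0})$ it is continuous on its finite domain and increasing, which suffices to conclude $\sup_t e^{-u(t)}$ is achieved at the smallest admissible $t$. A clean way to present this is to first establish the scalar fact that for a decreasing function $\varphi$ and a compact index set one has $\sup_y\varphi\bigl(\psi(y)\bigr)=\varphi\bigl(\inf_y\psi(y)\bigr)$, then apply it with $\varphi=e^{-u}$ and $\psi(y)=|z-y|_E$. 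That reduces the whole lemma to the distance identity $\min_{y\in K}|z-y|_E=d_E(z,K)$, which is immediate from \eqref{e:d_E(x,K)}.
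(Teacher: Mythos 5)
Your argument is correct and is essentially identical to the paper's own proof: unwind the Asplund product to $\sup_{y\in K}e^{-u(|z-y|_E)}$, use the monotonicity of $u$ to convert the supremum into $e^{-u(\inf_{y\in K}|z-y|_E)}$, and identify the infimum with $d_E(z,K)$ via \eqref{e:d_E(x,K)}. (Minor typo aside --- $\f{K}{E}{z}(z)$ should read $\f{K}{E}{u}(z)$ --- nothing needs to change.)
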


\begin{proof}
By definition of Asplund product and since $u$ is increasing,
\[
\begin{split}
\left(e^{-u(|\,\cdot\,|_E)}\star\chi_{_K}\right)(x) &
    =\sup_{y\in\R^n}e^{-u\bigl(|x-y|_E\bigr)}\chi_{_K}(y)\\
 & =\sup_{y\in K}e^{-u\bigl(|x-y|_E\bigr)}=e^{-\inf_{y\in K}u\bigl(|x-y|_E\bigr)}\\
 & =e^{-u\bigl(\inf_{y\in K}|x-y|_E\bigr)}=e^{-u\bigl(d_E(x,K)\bigr)}
\end{split}
\]
for all $x\in\R^n$.
\end{proof}

Also the projection of a function ``behaves well'' with our function
$\f{K}{E}{u}$:

\begin{lemma}\label{l:P_Hf_K=f_P_HK}
Let $K,E\in\K^n$ with $0\in\inter E$, $u\in\co(\R_{\geq 0})$ and
$H\in\G(n,k)$. Then, for every $x\in H$,
\[
\left(P_H\f{K}{E}{u}\right)(x)=\f{P_HK}{P_HE}{u}(x).
\]
Moreover, $\f{K}{E}{u}(x)\geq\f{K\cap H}{E\cap H}{u}(x)$.
\end{lemma}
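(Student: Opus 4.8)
The plan is to prove the two assertions separately, in each case unravelling the definitions and exploiting that $u$ is increasing, so that $G=e^{-u}$ is a continuous non-increasing function; consequently extremizing $G\bigl(d_E(\cdot,\cdot)\bigr)$ reduces to extremizing the distance $d_E$ in the opposite sense. Throughout I would use the representation $d_E(x,K)=\min\{t\geq 0:x\in K+tE\}$ from \eqref{e:d_E(x,K)} together with the linearity of the projection $P_H$.

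For the equality, I would begin from the definition of the projection of a function: for $x\in H$,
\[
\bigl(P_H\f{K}{E}{u}\bigr)(x)=\sup_{y\in H^{\bot}}e^{-u\bigl(d_E(x+y,K)\bigr)}=e^{-u\bigl(\inf_{y\in H^{\bot}}d_E(x+y,K)\bigr)},
\]
the last step being justified by the continuity and monotonicity of $e^{-u}$. It then remains to identify $\inf_{y\in H^{\bot}}d_E(x+y,K)$ with $d_{P_HE}(x,P_HK)$, the latter distance computed in $H\cong\R^k$, for then the right-hand side equals $\f{P_HK}{P_HE}{u}(x)$. For fixed $t\geq 0$ and $x\in H$, the condition $x\in P_H(K+tE)$ holds precisely when there is some $y\in H^{\bot}$ with $x+y\in K+tE$, i.e.\ with $d_E(x+y,K)\leq t$; hence $\inf_{y\in H^{\bot}}d_E(x+y,K)=\min\{t\geq 0:x\in P_H(K+tE)\}$. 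Since $P_H$ is linear, $P_H(K+tE)=P_HK+tP_HE$, and by \eqref{e:d_E(x,K)} this minimum is exactly $d_{P_HE}(x,P_HK)$. I would also note that $0\in\inter E$ forces $0$ to lie in the relative interior of $P_HE$ within $H$, so this distance is well defined.

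For the ``moreover'' inequality I would first dispose of the case $x\notin H$: as $(K\cap H)+t(E\cap H)\subseteq H$ for every $t\geq 0$, the point $x$ never belongs to it, so $d_{E\cap H}(x,K\cap H)=\infty$ and the right-hand side vanishes. For $x\in H$, since $u$ is increasing the claim is equivalent to $d_E(x,K)\leq d_{E\cap H}(x,K\cap H)$, and this follows at once from the inclusion $(K\cap H)+t(E\cap H)\subseteq K+tE$: the set of parameters $t$ defining the left distance contains the one defining the right distance, so its minimum is no larger.

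I expect the only genuinely delicate point to be the interchange $\sup_{y}G\bigl(d_E(x+y,K)\bigr)=G\bigl(\inf_{y}d_E(x+y,K)\bigr)$ together with the claim that this infimum is attained, so that it really equals $\min\{t\geq 0:x\in P_H(K+tE)\}$. I would settle this by a compactness argument: $K+tE$ is a convex body, $P_H(K+tE)=P_HK+tP_HE$ increases continuously with $t$, and the parameter set $\{t\geq 0:x\in P_HK+tP_HE\}$ is a closed half-line $[t_0,\infty)$, whence the infimum equals the minimum $t_0$ and continuity of $e^{-u}$ gives the exchange. Everything else is bookkeeping with the definitions of $d_E$, $P_H$ and the monotonicity of $u$.
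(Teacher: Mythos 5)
Your proposal is correct and follows essentially the same route as the paper: both reduce the first identity to the claim that $\inf_{y\in H^{\bot}}d_E(x+y,K)=d_{P_HE}(x,P_HK)$ via $P_H(K+tE)=P_HK+tP_HE$ (you phrase it as an equality of the parameter sets $\{t\geq 0:\cdot\}$, the paper as two inequalities plus attainment at some $y_0$, which is the same content), and both obtain the second assertion from the inclusion $(K\cap H)+t(E\cap H)\subseteq K+tE$ together with the monotonicity of $u$. Your extra remarks on attainment of the infimum and on the case $x\notin H$ are harmless refinements of the same argument.
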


\begin{proof}
If we show that
\begin{equation}\label{e:d(projection)}
d_{P_HE}(x,P_HK)=\inf_{y\in H^{\bot}}d_E(x+y,K),
\end{equation}
then we immediately get the first equality, because in such case
\[
\begin{split}
\left(P_H\f{K}{E}{u}\right)(x) &
    =\sup_{y\in H^{\bot}}\f{K}{E}{u}(x+y)=\sup_{y\in H^{\bot}}e^{-u\bigl(d_E(x+y,K)\bigr)}\\
 & =e^{-\inf_{y\in H^{\bot}}u\bigl(d_E(x+y,K)\bigr)}
    =e^{-u\bigl(\inf_{y\in H^{\bot}}d_E(x+y,K)\bigr)}\\
 & =e^{-u\bigl(d_{P_HE}(x,P_HK)\bigr)}=\f{P_HK}{P_HE}{u}(x).
\end{split}
\]
So, we have to prove \eqref{e:d(projection)}. Let $x\in H$ and $y\in
H^{\bot}$. If $x+y\in K+tE$ then $x=P_H(x+y)\in P_H(K+tE)=P_HK+tP_HE$, and
hence
\[
\{t\geq 0:x+y\in K+tE\}\subset\{t\geq 0:x\in P_HK+tP_HE\}.
\]
Therefore, from the definition of $d_E$ (see \eqref{e:d_E(x,K)}) we
conclude that
\begin{equation}\label{e:d_E_d_PHE}
d_E(x+y,K)\geq d_{P_HE}(x,P_HK)\quad\text{ for all }\; y\in H^{\bot}.
\end{equation}
Now, let $t_0=d_{P_HE}(x,P_HK)$. Then $x\in P_HK+t_0P_HE=P_H(K+t_0E)$ and
so there exists $y_0\in H^{\bot}$ such that $x+y_0\in K+t_0E$. This
implies that $t_0\geq d_E(x+y_0,K)$ which, together with
\eqref{e:d_E_d_PHE}, yields $t_0=d_E(x+y_0,K)$. Then we can conclude that
\[
\inf_{y\in H^{\bot}}d_E(x+y,K)=t_0=d_{P_HE}(x,P_HK),
\]
as required.

The proof of the second assertion of the lemma is straightforward: since
$(K\cap H)+t(E\cap H)\subset K+tE$, \eqref{e:d_E(x,K)} yields $d_{E\cap
H}(x,K\cap H)\geq d_E(x,K)$, and so
\[
\f{K}{E}{u}(x)=e^{-u\bigl(d_E(x,K)\bigr)}\geq e^{-u\bigl(d_{E\cap
H}(x,K\cap H)\bigr)}=\f{K\cap H}{E\cap H}{u}(x).\qedhere
\]
\end{proof}

Finally, we compute the $p$-norm of the function $\f{K}{E}{u}$. Given
$K,E\in\K^n$ with $0\in\inter E$, $u\in\co(\R_{\geq 0})$ and $1\leq
p<\infty$, we clearly have
\begin{equation}\label{e:|f_u|_p}
\|\f{K}{E}{u}\|_p=\left(\int_{\R^n}e^{-pu\bigl(d_E(x,K)\bigr)}\,\dlat
x\right)^{1/p}=\w_{pu}(K;E)^{1/p}.
\end{equation}
In particular, for $u=\u$ and $E=B^n_2$, we get the following relation.

\begin{lemma}\label{l:p-norm}
Let $K\in\K^n$ and $1\leq p<\infty$. Then
\begin{equation}\label{e:f_n/n-1}
\left\|\f{K}{B^n_2}{\u}\right\|_p=\frac{1}{p^{n/(2p)}}\w\bigl(\sqrt{p}\,K\bigr)^{1/p}.
\end{equation}
\end{lemma}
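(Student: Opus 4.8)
The plan is to read off the $p$-norm from the general formula \eqref{e:|f_u|_p} and then identify the resulting integral as a rescaled copy of the Wills functional via Hadwiger's representation \eqref{e:representacionesWills}. First I would specialize \eqref{e:|f_u|_p} to the convex function $u(t)=\pi t^{2}$ (that is, $u=\u$) and to $E=B^n_2$, so that $d_{B^n_2}(\,\cdot\,,K)=d(\,\cdot\,,K)$ is the ordinary Euclidean distance and the exponent becomes $pu\bigl(d(x,K)\bigr)=p\pi\, d(x,K)^{2}$. This gives
\[
\left\|\f{K}{B^n_2}{\u}\right\|_p
=\left(\int_{\R^n}e^{-p\pi\, d(x,K)^2}\,\dlat x\right)^{1/p},
\]
so the whole task reduces to evaluating the integral $\int_{\R^n}e^{-p\pi\, d(x,K)^2}\,\dlat x$ and recognizing it as $\w$ of a dilate of $K$.

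Next I would exploit the positive homogeneity of the Euclidean distance function, namely $d(\lambda x,\lambda K)=\lambda\, d(x,K)$ for every $\lambda>0$, which rephrases as $d\bigl(x/\sqrt{p},K\bigr)=(1/\sqrt{p})\,d\bigl(x,\sqrt{p}\,K\bigr)$. Performing the substitution $x\mapsto x/\sqrt{p}$, whose Jacobian contributes a factor $p^{-n/2}$, transforms the exponent $p\pi\, d(x,K)^2$ exactly into $\pi\, d\bigl(x,\sqrt{p}\,K\bigr)^2$, so that
\[
\int_{\R^n}e^{-p\pi\, d(x,K)^2}\,\dlat x
=p^{-n/2}\int_{\R^n}e^{-\pi\, d(x,\sqrt{p}\,K)^2}\,\dlat x.
\]

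Finally I would invoke the first integral representation in \eqref{e:representacionesWills}, $\w(M)=\int_{\R^n}e^{-\pi\, d(x,M)^2}\,\dlat x$, applied to $M=\sqrt{p}\,K$, so that the remaining integral equals $\w(\sqrt{p}\,K)$; taking $p$-th roots then yields
\[
\left\|\f{K}{B^n_2}{\u}\right\|_p
=\Bigl(p^{-n/2}\,\w\bigl(\sqrt{p}\,K\bigr)\Bigr)^{1/p}
=\frac{1}{p^{n/(2p)}}\,\w\bigl(\sqrt{p}\,K\bigr)^{1/p},
\]
which is precisely \eqref{e:f_n/n-1}. I do not expect any genuine obstacle here: the argument is a one-line reduction followed by a rescaling, and the only points demanding care are the bookkeeping of the Jacobian factor $p^{-n/2}$ and the scaling identity for $d(\,\cdot\,,K)$, both of which are elementary.
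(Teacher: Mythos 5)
Your proof is correct and follows essentially the same route as the paper: both write out the $p$-norm as $\bigl(\int_{\R^n}e^{-p\pi d(x,K)^2}\,\dlat x\bigr)^{1/p}$, use the homogeneity $d(\sqrt{p}\,x,\sqrt{p}\,K)=\sqrt{p}\,d(x,K)$ together with the change of variable $y=\sqrt{p}\,x$ to pick up the Jacobian $p^{-n/2}$, and identify the remaining integral as $\w(\sqrt{p}\,K)$ via Hadwiger's representation. No gaps.
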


\begin{proof}
Doing the change of variable $\sqrt{p}\,x=y$, we immediately get
\[
\begin{split}
\left\|\f{K}{B^n_2}{\u}\right\|_p & =\left(\int_{\R^n}e^{-p\pi
d(x,K)^2}\,\dlat x\right)^{1/p}
    =\left(\int_{\R^n}e^{-\pi d\bigl(\sqrt{p}\,x,\sqrt{p}\,K\bigr)^2}\,\dlat x\right)^{1/p}\\
 & =\left(\frac{1}{p^{n/2}}\int_{\R^n}e^{-\pi d\bigl(y,\sqrt{p}\,K\bigr)^2}\,\dlat y\right)^{1/p}
    =\frac{1}{p^{n/(2p)}}\w\bigl(\sqrt{p}\,K\bigr)^{1/p}.\qedhere
\end{split}
\]
\end{proof}

\section{Bounding the Wills functional of a convex body}\label{s:bounds_W}

In this section we provide upper and lower bounds for the Wills functional
of a convex body $K$ in terms of other functionals. Our first aim will be
to relate $\w_u(K;E)$ with the volumes of the involved sets.

\subsection{Relating the Wills functional to the volume}

We start this subsection by showing Theorem \ref{t:8^n/2vol}, which will
be a consequence of a more general result for $\w_u(K;E)$. First we study
the lower bound.

\begin{theorem}
Let $K,E\in\K^n$ with $0\in\inter E$ and let $u\in\co(\R_{\geq 0})$. Then,
for any $\lambda\in(0,1)$,
\begin{equation}\label{e:lowerboundWills_u_E}
\w_{u}(K;E)\geq\left(\frac{1}{\lambda^{\lambda}(1-\lambda)^{1-\lambda}}\right)^n
\left(\int_{\R^n}e^{-\frac{u(|x|_E)}{1-\lambda}}\,\dlat
x\right)^{1-\lambda}\vol(K)^\lambda.
\end{equation}
In particular, for $u=\u$ and $E=B^n_2$, we have
\begin{equation}\label{e:lowerboundWills_classical}
\w(K)\geq\left(\frac{1}{\lambda^{\lambda}(1-\lambda)^{(1-\lambda)/2}}\right)^n\vol(K)^\lambda.
\end{equation}
\end{theorem}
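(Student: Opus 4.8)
The plan is to deduce \eqref{e:lowerboundWills_u_E} from the Prékopa-Leindler inequality \eqref{e:PrekopaLeindler} applied to a suitable pair of rescaled functions, using that (Lemma~\ref{lem:Asplund}, together with \eqref{e:d_E(x,K)})
\[
\f{K}{E}{u}(z)=e^{-u(d_E(z,K))}=\sup_{w\in K}e^{-u\bigl(|z-w|_E\bigr)}.
\]
The rescaling is precisely what manufactures the constant $\bigl(\lambda^{\lambda}(1-\lambda)^{1-\lambda}\bigr)^{-n}$: since the Asplund product plays the role of Minkowski addition, we are in the same situation as in the passage from $\vol\bigl((1-\lambda)A+\lambda B\bigr)\geq\vol(A)^{1-\lambda}\vol(B)^{\lambda}$ to a bound for $\vol(A+B)$ through the decomposition $A+B=(1-\lambda)\tfrac{A}{1-\lambda}+\lambda\tfrac{B}{\lambda}$.

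Concretely, for a fixed $\lambda\in(0,1)$ I would set
\[
f(a)=e^{-\frac{1}{1-\lambda}u\bigl((1-\lambda)|a|_E\bigr)},\qquad g(b)=\chi_{_{\frac{1}{\lambda}K}}(b),
\]
and first verify the hypothesis of \eqref{e:PrekopaLeindler}, namely $\f{K}{E}{u}\bigl((1-\lambda)a+\lambda b\bigr)\geq f(a)^{1-\lambda}g(b)^{\lambda}$ for all $a,b\in\R^n$. When $\lambda b\notin K$ the right-hand side is $0$ and there is nothing to check; when $\lambda b\in K$, the point $w=\lambda b$ is admissible in the supremum above and, since $(1-\lambda)a+\lambda b-\lambda b=(1-\lambda)a$ and $u$ is increasing,
\[
\f{K}{E}{u}\bigl((1-\lambda)a+\lambda b\bigr)\geq e^{-u\bigl((1-\lambda)|a|_E\bigr)}=f(a)^{1-\lambda}g(b)^{\lambda},
\]
using $g^{\lambda}=g$.

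Then \eqref{e:PrekopaLeindler} yields $\w_u(K;E)=\int_{\R^n}\f{K}{E}{u}(x)\,\dlat x\geq\bigl(\int_{\R^n}f\bigr)^{1-\lambda}\bigl(\int_{\R^n}g\bigr)^{\lambda}$, and the two integrals are elementary: $\int_{\R^n}g=\vol\bigl(\tfrac{1}{\lambda}K\bigr)=\lambda^{-n}\vol(K)$, while the substitution $x=(1-\lambda)a$ (so that $(1-\lambda)|a|_E=|x|_E$) gives $\int_{\R^n}f=(1-\lambda)^{-n}\int_{\R^n}e^{-u(|x|_E)/(1-\lambda)}\,\dlat x$. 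Collecting the powers of $\lambda$ and $1-\lambda$ reproduces \eqref{e:lowerboundWills_u_E}. For the particular case $u=\u$, $E=B^n_2$, one has $|x|_E=|x|$ and $\int_{\R^n}e^{-\pi|x|^2/(1-\lambda)}\,\dlat x=(1-\lambda)^{n/2}$ by the standard Gaussian computation; substituting and simplifying the exponent of $(1-\lambda)$ gives \eqref{e:lowerboundWills_classical} (the degenerate case $\vol(K)=0$ being trivial).

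I expect the only real care to be in the bookkeeping that produces the exact constant: the Euclidean-type factor must be dilated by $1/(1-\lambda)$ inside $f$ and the body by $1/\lambda$ inside $g$, so that the convex combination $(1-\lambda)a+\lambda b$ with $\lambda b\in K$ matches the decomposition realizing the Asplund supremum. Everything past the verification of the Prékopa-Leindler hypothesis is routine.
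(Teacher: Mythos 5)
Your proof is correct and is essentially the paper's own argument: the paper takes exactly the same $f=e^{-u((1-\lambda)|\cdot|_E)/(1-\lambda)}$ and $g=\chi_{_{K/\lambda}}$, sets $h=f\bigl(\tfrac{\cdot}{1-\lambda}\bigr)^{1-\lambda}\star g\bigl(\tfrac{\cdot}{\lambda}\bigr)^{\lambda}$, identifies $h=\f{K}{E}{u}$ via Lemma~\ref{lem:Asplund}, and applies Pr\'ekopa--Leindler. Your direct verification of the hypothesis by choosing $w=\lambda b$ in the supremum is just an unpacked version of that identification, and your bookkeeping of the constants and of the Gaussian special case matches the paper's.
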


\begin{proof}
We consider the functions
\[
f=e^{-u\bigl((1-\lambda)|\,\cdot\,|_E\bigr)/(1-\lambda)},\quad
g=\chi_{_{K/\lambda}}\quad\text{and}\quad
h=f\left(\frac{\cdot}{1-\lambda}\right)^{1-\lambda}\star
g\left(\frac{\cdot}{\lambda}\right)^{\lambda},
\]
for which $h=e^{-u(|\,\cdot\,|_E)}\star\chi_{_K}=\f{K}{E}{u}$ by
Lemma \ref{lem:Asplund}. Then, the Pr\'ekopa-Leindler inequality
(Theorem \ref{t:BBL} for $p=0$) applied to $f$, $g$ and $h$
yields~\eqref{e:lowerboundWills_u_E}.

When $E=B^n_2$ and $u=\u$, and since the standard Gaussian measure
$\gamma_n$ is a probability measure, we get (cf.
\eqref{e:gaussian_measure})
\begin{equation}\label{e:int_exp}
\int_{\R^n}e^{-\frac{\pi}{1-\lambda}|x|^2}\,\dlat x
=\frac{(1-\lambda)^{n/2}}{(2\pi)^{n/2}}\int_{\R^n}e^{-\frac{|x|^2}{2}}\,\dlat
x=(1-\lambda)^{n/2}.
\end{equation}
This finishes the proof.
\end{proof}

Following the idea of the above proof but now using the linear refinement
of the Pr\'ekopa-Leindler inequality (see \cite[Theorem~1.5]{CSY}), we
immediately get the following improvement of
\eqref{e:lowerboundWills_classical} for the classical Wills functional.

\begin{corollary}
Let $K\in\K^n$ be such that there exists $H\in\G(n,n-1)$ with
\[
\vol_{n-1}(P_HK)=\left(\frac{\lambda}{\sqrt{1-\lambda}}\right)^{n-1}
\]
for some $\lambda\in(0,1)$. Then
\[
\w(K)\geq\frac{\vol(K)}{\lambda^{n-1}}+\frac{1}{(1-\lambda)^{(n-2)/2}}.
\]
\end{corollary}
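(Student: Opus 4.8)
The plan is to mimic exactly the proof of the preceding theorem, but replace the Prékopa-Leindler inequality with its linear refinement from \cite[Theorem~1.5]{CSY}. That refinement strengthens the standard conclusion: under the same hypothesis $h\bigl((1-\lambda)x+\lambda y\bigr)\geq f(x)^{1-\lambda}g(y)^{\lambda}$, when one of the factors is a characteristic function of a body whose projection onto some hyperplane $H\in\G(n,n-1)$ has a prescribed $(n-1)$-volume, the integral of $h$ admits an additive lower bound of the form $\int_{\R^n}h\geq A+B$ rather than merely the multiplicative product of the two integrals. I would apply it to the same triple
\[
f=e^{-\pi(1-\lambda)|\,\cdot\,|^2/(1-\lambda)},\qquad g=\chi_{_{K/\lambda}},\qquad h=\f{K}{B^n_2}{\u},
\]
for which Lemma~\ref{lem:Asplund} again gives $h=e^{-\u(|\,\cdot\,|)}\star\chi_{_K}=\f{K}{B^n_2}{\u}$, so that $\int_{\R^n}h=\w(K)$.

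The two ingredients feeding the refined inequality are then computed exactly as in the previous theorem. First, $\int_{\R^n}g(x)\,\dlat x=\vol(K/\lambda)=\vol(K)/\lambda^n$, which will produce the term $\vol(K)/\lambda^{n-1}$ after the refinement's normalization. Second, by the Gaussian computation already recorded in \eqref{e:int_exp},
\[
\int_{\R^n}f(x)\,\dlat x=\int_{\R^n}e^{-\frac{\pi}{1-\lambda}|x|^2}\,\dlat x=(1-\lambda)^{n/2},
\]
which supplies the second summand $1/(1-\lambda)^{(n-2)/2}$. The hypothesis $\vol_{n-1}(P_HK)=\bigl(\lambda/\sqrt{1-\lambda}\bigr)^{n-1}$ is precisely the normalization condition that the linear refinement requires on the projection of the support of $g$ (namely $P_H(K/\lambda)$ has the ``critical'' $(n-1)$-volume relative to the one-dimensional marginal of $f$), which is why the two integrals combine \emph{additively} rather than multiplicatively.

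The main work, and the only genuine obstacle, is to check that the geometric normalization in \cite[Theorem~1.5]{CSY} matches the stated hypothesis after the dilations by $1/(1-\lambda)$ and $1/\lambda$ built into the definition of $h$. Concretely, I would verify that the projection $P_H(K/\lambda)$ scaled appropriately carries $(n-1)$-volume $\bigl(\lambda/\sqrt{1-\lambda}\bigr)^{n-1}/\lambda^{n-1}=(1-\lambda)^{-(n-1)/2}$, and that this is exactly the value at which the refinement forces the additive bound with the Gaussian marginal of $f$ (whose one-dimensional profile has the corresponding $(1-\lambda)^{1/2}$ scale). Once the normalization is aligned, the refined Prékopa-Leindler inequality outputs
\[
\w(K)=\int_{\R^n}h\geq\frac{\vol(K)}{\lambda^{n-1}}+\frac{1}{(1-\lambda)^{(n-2)/2}},
\]
which is the claim. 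I expect the bookkeeping of the exponents of $(1-\lambda)$ across the dilation, the $n/2$ from the Gaussian integral, and the $(n-1)$ from the projection to be the delicate part; the rest is a direct transcription of the previous proof.
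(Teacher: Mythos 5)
Your strategy is exactly the paper's: apply the linear refinement of the Pr\'ekopa--Leindler inequality from \cite[Theorem~1.5]{CSY} to the same triple $(f,g,h)$ used in the proof of the preceding theorem, with the hypothesis on $\vol_{n-1}(P_HK)$ serving precisely as the normalization $\int_H P_Hf=\int_H P_Hg$ that the refinement demands. (The paper's own write-up first rescales, taking $f=e^{-\pi|\cdot|^2}$ and $g=\chi_{_L}$ with $L=\bigl(\sqrt{1-\lambda}/\lambda\bigr)K$ so that both projected integrals equal $1$, and then divides out the factor $(1-\lambda)^{n/2}$ that appears in $\int h$; this is only a cosmetic difference from working with the unrescaled triple as you propose.)

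However, the computation you present does not close, and the failure sits exactly in the ``bookkeeping of the exponents'' that you flag as delicate and leave unverified. For $u=\pi(\cdot)^2$ and $E=B_2^n$, the first function of the triple in the preceding proof is $f(x)=e^{-u((1-\lambda)|x|)/(1-\lambda)}=e^{-\pi(1-\lambda)|x|^2}$, whose integral over $\R^n$ is $(1-\lambda)^{-n/2}$ and whose projection onto $H$ integrates to $(1-\lambda)^{-(n-1)/2}$, matching $\vol_{n-1}\bigl(P_H(K/\lambda)\bigr)$ under the hypothesis. The refinement then gives
\[
\w(K)\geq(1-\lambda)\int_{\R^n}f(x)\,\dlat x+\lambda\int_{\R^n}g(x)\,\dlat x=(1-\lambda)^{1-n/2}+\frac{\vol(K)}{\lambda^{n-1}},
\]
which is the claim, since $(1-\lambda)^{1-n/2}=1/(1-\lambda)^{(n-2)/2}$. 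You instead write $f$ as $e^{-\pi(1-\lambda)|\cdot|^2/(1-\lambda)}$ (which simplifies to $e^{-\pi|\cdot|^2}$, of integral $1$) and then integrate yet a third function, $e^{-\pi|x|^2/(1-\lambda)}$, obtaining $(1-\lambda)^{n/2}$; with that value the refinement would produce the summand $(1-\lambda)^{1+n/2}$, not $1/(1-\lambda)^{(n-2)/2}$, and since $(1-\lambda)^{1+n/2}<(1-\lambda)^{1-n/2}$ the stated bound would not follow. So the idea is complete and correct, but the Gaussian must be $e^{-\pi(1-\lambda)|x|^2}$ for both the normalization condition and the second summand to come out right.
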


\begin{proof}
Given two log-concave functions
$f,g:\R^n\longrightarrow\R_{\geq0}$ (decaying to zero at
infinity), if there exists a hyperplane $H\in\G(n,n-1)$ such that
$P_Hf$ and $P_Hg$ have the same (finite) integral, then the
right-hand side in the Pr\'ekopa-Leindler
inequality~\eqref{e:PrekopaLeindler} (Theorem \ref{t:BBL} for
$p=0$) can be replaced by the arithmetic mean of the integrals of
$f$ and $g$ (see \cite[Theorem~1.5]{CSY}):
\begin{equation}\label{e:PL_lin}
\int_{\R^n}h(x)\,\dlat x\geq (1-\lambda) \int_{\R^n}f(x)\,\dlat x
+\lambda\int_{\R^n}g(x)\,\dlat x.
\end{equation}

Let $L=\bigl(\sqrt{1-\lambda}/\lambda\bigr)K$,
$f=e^{-\pi|\,\cdot\,|^2}$, $g=\chi_{_{L}}$ and
$h=f\bigl(\frac{\cdot}{1-\lambda}\bigr)^{1-\lambda}\star
g\bigl(\frac{\cdot}{\lambda}\bigr)^{\lambda}$. Since
\[
(P_Hf)(x)=\sup_{y\in H^{\bot}}e^{-\pi|x+y|^2}=e^{-\pi|x|^2}\sup_{y\in
H^{\bot}}e^{-\pi|y|^2}=e^{-\pi|x|^2}
\]
for all $x\in H$, then (cf.\eqref{e:int_exp})
\[
\int_H(P_Hf)(x)\,\dlat x=1=\vol_{n-1}(P_HL)=\int_H(P_Hg)(x)\,\dlat x,
\]
and hence we are in the above conditions and we can apply the
mentioned refinement of the Pr\'ekopa-Leindler inequality. Thus,
using Lemma~\ref{lem:Asplund},
\[
h=e^{-\frac{\pi}{1-\lambda}|\,\cdot\,|^2}\star\chi_{_{\lambda L}}
=e^{-\frac{\pi}{1-\lambda}d(\,\cdot\, ,\lambda L)^2} =e^{-\pi
d\bigl(\frac{\cdot}{\sqrt{1-\lambda}},K\bigr)^2},
\]
and so, integrating $f$, $g$ and $h$, doing the change of variable
$y=x/\sqrt{1-\lambda}$ and applying \eqref{e:PL_lin}, we get
\[
(1-\lambda)^{n/2}\w(K)\geq 1-\lambda+\lambda\vol(L)
=1-\lambda+\frac{(1-\lambda)^{n/2}}{\lambda^{n-1}}\vol(K).
\]
This finishes the proof.
\end{proof}

By taking $\lambda=1/2$ in \eqref{e:lowerboundWills_classical}, we obtain
the lower bound in Theorem \ref{t:8^n/2vol}:
\begin{equation*}
\w(K)\geq\left(8^{n/2}\vol(K)\right)^{1/2}.
\end{equation*}
Next we show the upper bound in Theorem \ref{t:8^n/2vol}: up to a
factor depending on the standard Gaussian measure $\gamma_n$ of
$K$, the (classical) Wills functional $\w(\cdot)$ can be bounded
from above by $8^{n/2}\vol(K)$. This is the content of the
following result in the more general setting of the generalized
Wills functional $\w_{u}(\cdot\,;E)$.

\begin{theorem}\label{t:W<vol_gamma_n}
Let $K, E\in\K^n$ be convex bodies with non-empty interior such that
$0\in\inter E$ and let $u\in\co(\R_{\geq 0})$ be strictly increasing. Then
\begin{equation}\label{e:GaussianWills_u_E}
\w_{u}(K;E)\leq\min\left\{\binom{2n}{n}e^{-u(0)}\m_n^u,4^n
\m_n^{2u}\right\} \frac{\vol(K)\vol(E)}{\sup_{y\in\R^n}\mu_{u,E}(y-K)},
\end{equation}
where $\mu_{u,E}$ is the measure on $\R^n$ given by
$\dlat\mu_{u,E}(x)=e^{-u(|x|_E)}\,\dlat x$. In particular, for $u=\u$ and
$E=B^n_2$, we have
\begin{equation}\label{e:GaussianWills_classical}
\w(K)\leq\frac{\min\left\{\binom{2n}{n},8^{n/2}\right\}\vol(K)}{\sup_{y\in\R^n}\gamma_n\bigl(y+\sqrt{2\pi}K\bigr)},
\end{equation}
where $C_n:=\min\left\{\binom{2n}{n},8^{n/2}\right\}$ is given by $C_2=6$,
$C_3=20$ and $C_n=8^{n/2}$ for all $n\geq 4$.
\end{theorem}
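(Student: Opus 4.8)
The plan is to integrate the Asplund--product representation of $\f{K}{E}{u}$ and feed it into two different Rogers--Shephard type inequalities, one for each term of the minimum (recall that $\w_u(K;E)\le\min\{A,B\}$ means proving $\w_u(K;E)\le A$ and $\w_u(K;E)\le B$ separately). By Lemma~\ref{lem:Asplund}, $\f{K}{E}{u}=f\star g$ with $f=e^{-u(|\,\cdot\,|_E)}$ and $g=\chi_{_K}$, so $\w_u(K;E)=\int_{\R^n}(f\star g)(x)\,\dlat x$. First I would record the functional data of $f,g$. Since $u$ is increasing, $\|f\|_\infty=e^{-u(0)}$ and $\|g\|_\infty=1$; applying Proposition~\ref{lem:ComputationIntegral} with $K$ replaced by $\{0\}$ gives $\|f\|_1=\w_u(\{0\};E)=\m_n^u\vol(E)$, and the same with $2u$ in place of $u$ (note $f^2=\f{\{0\}}{E}{2u}$) gives $\|f\|_2^2=\w_{2u}(\{0\};E)=\m_n^{2u}\vol(E)$, while $\|g\|_1=\|g\|_2^2=\vol(K)$. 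Finally, unravelling the convolution, $(f*g)(y)=\int_{y-K}e^{-u(|x|_E)}\,\dlat x=\mu_{u,E}(y-K)$, so $\|f*g\|_\infty=\sup_{y}\mu_{u,E}(y-K)$ is exactly the denominator in the statement.

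For the first term I would apply the functional Rogers--Shephard inequality \eqref{eq:RSfunctions} directly to this pair, obtaining
\[
\sup_{y}\mu_{u,E}(y-K)\,\w_u(K;E)\le\binom{2n}{n}e^{-u(0)}\vol(E)\m_n^u\,\vol(K),
\]
which upon dividing is precisely the bound with constant $\binom{2n}{n}e^{-u(0)}\m_n^u$.

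The second term does \emph{not} follow from \eqref{eq:RSfunctions}: since $\|f\|_2^2\le\|f\|_\infty\|f\|_1$ (strictly, here), replacing $\|f\|_\infty\|f\|_1$ by $\|f\|_2^2$ is a strictly stronger statement. Instead I would establish the $L^2$--normalised inequality
\[
\|f*g\|_\infty\int_{\R^n}(f\star g)(x)\,\dlat x\le 4^n\,\|f\|_2^2\,\|g\|_2^2,
\]
which, via the norms above, is exactly the second term. The route is a ``square trick'': fix a maximizer $y_0$ of $f*g$ and, for each $z$, a point $k(z)\in K$ with $(f\star g)(z)=f(z-k(z))$; log-concavity of $f$ then gives $f(z-k(z))f(y_0-k)\le f\!\bigl(\tfrac12(z-k(z)+y_0-k)\bigr)^2$ for all $k\in K$. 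Integrating over $z\in\R^n$ and $k\in K$ turns the left side into $\w_u(K;E)\,\|f*g\|_\infty$; the substitution in $k$ contributes a dilation factor $2^n$ and leaves $\int_{\R^n}\mu_{2u,E}\!\bigl(p(z)-\tfrac12 K\bigr)\,\dlat z$, where $p(z)=\tfrac12(z-k(z)+y_0)$ and $\mu_{2u,E}$ is the measure with density $e^{-2u(|x|_E)}$. The hard part is exactly controlling this last integral: the map $z\mapsto z-k(z)$ is the (non-injective) complement of the metric projection onto $K$, and bounding the multiplicity of $p$ — the Rogers--Shephard phenomenon already hidden inside \eqref{eq:RSfunctions} — is what produces the extra factor $2^n$ and hence the final constant $4^n$. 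This multiplicity estimate, rather than any of the bookkeeping, is the genuine obstacle.

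Finally I would specialize to $u=\u$ and $E=B_2^n$. Here $u(0)=0$ and $\m_n^{\u}=1/\kappa_n$ by \eqref{e:m_i_u_0}; the same Gamma-integral with $2\u$ gives $\m_n^{2\u}=(2\pi)^{-n/2}\Gamma(\tfrac n2+1)=2^{-n/2}/\kappa_n$, and $\vol(B_2^n)=\kappa_n$. The change of variables $x=z/\sqrt{2\pi}$ turns $\mu_{\u,B_2^n}(y-K)$ into $\gamma_n\bigl(\sqrt{2\pi}(y-K)\bigr)$ (cf.\ \eqref{e:gaussian_measure}), and the symmetry $\gamma_n(-A)=\gamma_n(A)$ lets me rewrite the supremum as $\sup_y\gamma_n\bigl(y+\sqrt{2\pi}K\bigr)$. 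Substituting, the two constants collapse to $\binom{2n}{n}$ and $4^n2^{-n/2}=8^{n/2}$, giving \eqref{e:GaussianWills_classical} with $C_n=\min\{\binom{2n}{n},8^{n/2}\}$. Comparing $\binom{2n}{n}$ with $8^{n/2}=(2\sqrt2)^n$ then yields $C_2=6$ and $C_3=20$ (where $\binom{2n}{n}$ is the smaller), and $C_n=8^{n/2}$ for $n\ge4$ (already $64<70$ at $n=4$, after which the ratio $8^{n/2}/\binom{2n}{n}$ decreases).
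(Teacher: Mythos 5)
Your treatment of the first term of the minimum and of the final specialization is correct and coincides with the paper's: you apply \eqref{eq:RSfunctions} to $f=e^{-u(|\,\cdot\,|_E)}$ and $g=\chi_{_K}$ with the norms $\|f\|_\infty=e^{-u(0)}$, $\|f\|_1=\m_n^u\vol(E)$, $\|f*g\|_\infty=\sup_{y}\mu_{u,E}(y-K)$ computed exactly as the paper does, and the constants for $u=\u$, $E=B_2^n$ collapse to $\binom{2n}{n}$ and $4^n\m_n^{2\u}\kappa_n=8^{n/2}$ as you say. The problem is the second term of the minimum. You correctly reduce it to the $L^2$-normalised inequality $\|f*g\|_\infty\int_{\R^n}(f\star g)\,\dlat x\le 4^n\|f\|_2^2\,\|g\|_2^2$ (which, via $\|f\|_2^2=\m_n^{2u}\vol(E)$ and $\|\chi_{_K}\|_2^2=\vol(K)$, is exactly what is needed), but your proposed proof of that inequality --- the ``square trick'' with the metric projection and a multiplicity bound for the map $p$ --- is not carried out, and you yourself flag the multiplicity estimate as ``the genuine obstacle.'' As written, the bound with constant $4^n\m_n^{2u}$ is therefore not proved.

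The missing inequality is, however, an immediate consequence of a tool already quoted in the paper, namely the difference-function inequality \eqref{e:diff_function}, so no new projection argument is needed. Apply \eqref{e:diff_function} with $\lambda=1/2$ to the log-concave functions $F=f^2$ and $G(x)=g(-x)^2$: then $\int_{\R^n}\sqrt{F(x)G(x)}\,\dlat x=(f*g)(0)$, which a translation of $g$ turns into $\|f*g\|_\infty$ (all other quantities being translation invariant); moreover $\Delta_{1/2}^{F,G}(z)=\sup_{a+b=4z}f(a)g(b)=(f\star g)(4z)$, so $\int_{\R^n}\Delta_{1/2}^{F,G}(z)\,\dlat z=4^{-n}\int_{\R^n}(f\star g)(x)\,\dlat x$, and $\int F=\|f\|_2^2$, $\int G=\|g\|_2^2$. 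Inequality \eqref{e:diff_function} then reads precisely as your $L^2$ inequality with the constant $4^n$. This is in substance what the paper does: it applies \eqref{e:diff_function} with $\lambda=1/2$ to $f$ and $g$ themselves, identifies $\sqrt{f\star g}=\f{K}{E}{u/2}$ and the first factor as $\mu_{u/2,E}(-K)$, and then replaces $u$ by $2u$ and uses translation invariance to pass from $-K$ to $y-K$. So your reduction is the right one; replace the unproved multiplicity argument by this application of \eqref{e:diff_function} and the proof is complete. (A minor point: your claim that the ratio $8^{n/2}/\binom{2n}{n}$ is decreasing should be justified, e.g.\ by noting that $\binom{2n+2}{n+1}/\binom{2n}{n}=(4n+2)/(n+1)\ge 3>2\sqrt{2}$; the paper instead argues via $\binom{2n}{n}\ge 4^n/(2n+1)$ for $n\ge 9$ and checks $n=4,\dots,8$ directly.)
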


\begin{proof}
Let $f=e^{-u(|\,\cdot\,|_E)}$ and let $g=\chi_{_K}$. Then, by
Lemma \ref{lem:Asplund}, we have that $f\star g=\f{K}{E}{u}$.
Moreover, $\|f\|_{\infty}=e^{-u(0)}$,
\[
(f*g)(y)=\int_{\R^n}e^{-u(|x|_E)}\chi_{_K}(y-x)\,\dlat x
=\int_{y-K}e^{-u(|x|_E)}\,\dlat x
\]
and
\begin{equation}\label{e:int_f=m_nvol(E)}
\begin{split}
\int_{\R^n}f(x)\,\dlat x
 & =\int_{\R^n} e^{-u(|x|_E)}\dlat x=\int_{\R^n}\int_{u(|x|_E)}^{\infty} e^{-s}\,\dlat s\,\dlat x\\
 & =\int_{u(0)}^{\infty}e^{-s}\int_{\R^n}\chi_{_{\{y:\, u(|y|_E)\leq s\}}}(x)\,\dlat x\,\dlat s\\
 & =\int_{u(0)}^{\infty}e^{-s}\vol\bigl(u^{-1}(s)E\bigr)\,\dlat
 s=\m_n^u\,\vol(E).
\end{split}
\end{equation}
On one hand, by applying \eqref{eq:RSfunctions} to the functions $f$ and
$g$ we get
\begin{equation}\label{e:proving_GaussianWills_u_E}
\sup_{y\in\R^n}\mu_{u,E}(y-K)\,\w_{u}(K;E)\leq\binom{2n}{n}e^{-u(0)}\m_n^u\,\vol(K)\vol(E).
\end{equation}
On the other hand, by applying \eqref{e:diff_function} for $\lambda=1/2$
to the functions $f$ and $g$, we obtain
\[
\begin{split}
\mu_{u/2,E}(-K)\,\w_{u/2}(K;E) & =\int_{-K}e^{-\frac{u}{2}(|x|_E)}\,\dlat x \int_{\R^n}\f{K}{E}{u/2}(x)\,\dlat x\\
 & =\int_{\R^n}\sqrt{f(x)g(-x)}\,\dlat x \int_{\R^n}\sqrt{f\star g(x)}\,\dlat x\\[1mm]
 & \leq 4^n \m_n^u\,\vol(K)\vol(E).
\end{split}
\]
Since both $\w_{u/2}(\cdot\,;E)$ and $\vol(\cdot)$ are translation
invariant we get (from the above inequality for $2u$) that
\begin{equation*}
\sup_{y\in\R^n}\mu_{u,E}(y-K)\,\w_{u}(K;E)\leq 4^n
\m_n^{2u}\,\vol(K)\vol(E),
\end{equation*}
which, together with \eqref{e:proving_GaussianWills_u_E}, shows
\eqref{e:GaussianWills_u_E}.

When $E=B_2^n$ and $u=\u$, since
\[
\m_n^{2\u}=\frac{1}{(2\pi)^{n/2}}\int_{0}^{\infty}e^{-s}s^{n/2}\,\dlat s
=\frac{\m_n^{\u}}{2^{n/2}}=\frac{1}{2^{n/2}}\frac{1}{\kappa_n}
\]
(see \eqref{e:m_i_u_0}) and
$\mu_{\u,B_2^n}(y-K)=\gamma_n\bigl(-\sqrt{2\pi}y+\sqrt{2\pi}K\bigr)$,
then \eqref{e:GaussianWills_u_E} yields
\eqref{e:GaussianWills_classical}. Finally, since
$\binom{2n}{n}\geq\binom{2n}{i}$ for all $i=0,\dots,2n$, we have
that
\[
\binom{2n}{n}\geq\frac{1}{2n+1}\sum_{i=0}^{2n}\binom{2n}{i}=\frac{4^n}{2n+1},
\]
which, jointly with the fact that $2^{n/2}\geq 2n+1$ for all $n\geq 9$,
implies that
\[
C_n=\min\left\{\binom{2n}{n},8^{n/2}\right\}=8^{n/2}\quad\text{ for all }
\; n\geq 9.
\]
Straightforward computations show the remaining cases $n=2,\dots,8$ of the
last assertion. This concludes the proof.
\end{proof}

In view of Theorem \ref{t:W<vol_gamma_n}, the question arises
whether an upper bound for $\w_u(K;E)$ can be obtained involving
$\vol(K)$ as the only functional of $K$. A different technique
will allow us to get additional upper and lower bounds for the
Wills functional just in terms of the volumes of the involved
sets.

On the one hand, it is well-known that the (relative) quermassintegrals of
two convex bodies satisfy the inequalities
\[
\begin{split}
\W_i(K;E)^2\geq\W_{i-1}(K;E)\W_{i+1}(K;E), & \quad\; 1\leq i\leq n-1,\\
\W_i(K;E)\W_j(K;E)\geq\W_k(K;E)\W_l(K;E), &
    \quad\; 0\leq k<i<j<l\leq n,
\end{split}
\]
which are particular cases of the Aleksandrov-Fenchel inequality
(see e.g.~\cite[Section~7.3]{Sch}). Then, we get in particular
that
\begin{equation}\label{e:Wi^2>W0Wn}
\W_i(K;E)^2\geq\W_0(K;E)\W_n(K;E)=\vol(K)\vol(E),
\end{equation}
and hence, for any $u\in\co(\R_{\geq 0})$ strictly increasing,
\[
\begin{split}
\w_u(K;E) & =\sum_{i=0}^n\binom{n}{i}\m_i^u\,\W_i(K;E)\\
 & \geq\vol(K)+\m_n^u\,\vol(E)
    +\bigl(\vol(K)\vol(E)\bigr)^{1/2}\sum_{i=1}^{n-1}\binom{n}{i}\m_i^u.
\end{split}
\]
On the other hand, we see from \eqref{e:Wi^2>W0Wn} that it is not
possible to bound from above the Wills functional only in terms of
the volumes of the involved sets, because there are convex bodies
with volume arbitrarily small but having the remaining
quermassintegrals bounded from below. Therefore, in order that
such an upper bound for $\w_u(K;E)$ makes sense, it is necessary
to `modify' either $K$ or $E$. In this regard we prove the
following result.
\begin{theorem}\label{t:W(K;E)_vol(K)vol(E)}
Let $u\in\co(\R_{\geq 0})$ be strictly increasing and such that $u(0)=0$,
and let $K,E\in\K^n$ with $0\in\inter E$. Then there exist $T\in\SL(n)$
and an absolute constant $C>0$ such that
\[
\w_{u}(TK;E)^{1/n}\leq
C\left(\vol(K)^{1/n}+(\m_n^u)^{1/n}\vol(E)^{1/n}\right).
\]
\end{theorem}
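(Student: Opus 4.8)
The plan is to reduce the statement to the reverse Brunn--Minkowski inequality for log-concave functions of Klartag and Milman (Theorem~\ref{t:Klartag_Milman}), exploiting the fact (Lemma~\ref{lem:Asplund}) that $\f{K}{E}{u}$ is an Asplund product. First I would set
\[
f=e^{-u(|\,\cdot\,|_E)}\quad\text{and}\quad g=\chi_{_K},
\]
so that $f\star g=\f{K}{E}{u}$ by Lemma~\ref{lem:Asplund}, whence $\w_u(K;E)=\int_{\R^n}(f\star g)(x)\,\dlat x$. The hypotheses $u(0)=0$ and $u$ increasing give $\|f\|_\infty=f(0)=e^{-u(0)}=1$, and clearly $\|g\|_\infty=1$; since both $\w_u(\,\cdot\,;E)$ and $\vol(\cdot)$ are translation invariant, I may translate $K$ so that $0\in\inter K$, which yields $g(0)=1$ as well. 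Thus $f$ and $g$ meet the normalization required by Theorem~\ref{t:Klartag_Milman}. (If $\m_n^u=\infty$ the asserted bound is trivial, so I may assume $\m_n^u<\infty$; together with $\vol(E)>0$ and $\vol(K)>0$ this ensures $f$ and $g$ have finite positive integrals.)

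Next I would apply Theorem~\ref{t:Klartag_Milman} to $f$ and $g$, obtaining $T_1,T_2\in\SL(n)$ and an absolute constant $C>0$ with
\[
\left(\int_{\R^n}\bigl((f\circ T_1)\star(g\circ T_2)\bigr)(x)\,\dlat x\right)^{1/n}
\leq C\left[\left(\int_{\R^n}(f\circ T_1)\,\dlat x\right)^{1/n}+\left(\int_{\R^n}(g\circ T_2)\,\dlat x\right)^{1/n}\right].
\]
Since $T_1,T_2\in\SL(n)$ are volume preserving, the two integrals on the right are unchanged: by \eqref{e:int_f=m_nvol(E)} one has $\int_{\R^n}(f\circ T_1)\,\dlat x=\int_{\R^n}f\,\dlat x=\m_n^u\,\vol(E)$, while $\int_{\R^n}(g\circ T_2)\,\dlat x=\int_{\R^n}\chi_{_K}\,\dlat x=\vol(K)$.

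For the left-hand side I would invoke Lemma~\ref{l:asplund_fg} to rewrite $(f\circ T_1)\star(g\circ T_2)=\bigl(f\star(g\circ S)\bigr)\circ T_1$, where $S=T_2T_1^{-1}\in\SL(n)$; integrating and using that $T_1$ is volume preserving removes $T_1$, leaving $\int_{\R^n}\bigl(f\star(g\circ S)\bigr)(z)\,\dlat z$. Now $g\circ S=\chi_{_K}\circ S=\chi_{_{S^{-1}K}}$, so Lemma~\ref{lem:Asplund} again gives $f\star(g\circ S)=\f{S^{-1}K}{E}{u}$, and the left-hand integral equals $\w_u(S^{-1}K;E)$. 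Setting $T=S^{-1}=T_1T_2^{-1}\in\SL(n)$ and combining the three computations yields
\[
\w_u(TK;E)^{1/n}\leq C\left[\bigl(\m_n^u\,\vol(E)\bigr)^{1/n}+\vol(K)^{1/n}\right]
= C\left[\vol(K)^{1/n}+(\m_n^u)^{1/n}\vol(E)^{1/n}\right],
\]
which is the desired inequality.

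The computation itself is mechanical once the set-up is right; the point requiring care is the bookkeeping of the special linear maps. The maps $T_1,T_2$ produced by Klartag--Milman act on $f$ and $g$ \emph{separately}, whereas the target inequality asks for a single transformation of $K$; Lemma~\ref{l:asplund_fg} is precisely the device that converts the pair $(T_1,T_2)$ into the single volume-preserving map $S=T_2T_1^{-1}$ acting on $K$, at no cost to the integrals. The normalization $\|f\|_\infty=f(0)=1$, which is exactly where the hypothesis $u(0)=0$ is used, is the other ingredient that must be verified before Theorem~\ref{t:Klartag_Milman} applies.
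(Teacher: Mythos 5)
Your proposal is correct and follows essentially the same route as the paper: both apply Theorem~\ref{t:Klartag_Milman} to $f=e^{-u(|\,\cdot\,|_E)}$ and $g=\chi_{_K}$ (normalized via $u(0)=0$ and a translation putting $0\in K$), and both convert the pair $(T_1,T_2)$ into the single map $T=T_1T_2^{-1}$ acting on $K$ — the paper by an explicit change of variables, you by citing Lemma~\ref{l:asplund_fg}, which the paper also invokes in passing. The bookkeeping of the integrals via \eqref{e:int_f=m_nvol(E)} is identical.
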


\begin{proof}
We assume without loss of generality that $0\in K$ because the involved
functionals are translation invariant. Since
$\f{K}{E}{u}=e^{-u(|\,\cdot\,|_E)}\star\chi_{_K}$ and both
$e^{-u(|\,\cdot\,|_E)}$ and $\chi_{_K}$ attain their maximums at the
origin and
$\bigl\|e^{-u(|\,\cdot\,|_E)}\bigr\|_{\infty}=\|\chi_{_K}\|_{\infty}=1$,
Theorem~\ref{t:Klartag_Milman} yields the existence of $T_1,T_2\in\SL(n)$
and an absolute constant $C>0$ such that
\begin{align*}
 & \left(\int_{\R^n}\sup_{y\in\R^n}e^{-u\bigl(|T_1y|_E\bigr)}\chi_{_K}(T_2x-T_2y)\,\dlat x\right)^{1/n}\\
 & \hspace*{3cm}=\left(\int_{\R^n}\left(\bigl(e^{-u(|\,\cdot\,|_E)}\circ T_1\bigr)
    \star\bigl(\chi_{_K}\circ T_2\bigr)\right)(x)\,\dlat x\right)^{1/n}\\
 & \hspace*{3cm}\leq C\left[\left(\int_{\R^n}e^{-u\bigl(|T_1x|_E\bigr)}\dlat x\right)^{1/n}
    +\left(\int_{\R^n}\chi_{_K}(T_2x)\,\dlat x\right)^{1/n}\right]
\end{align*}
(see also Lemma \ref{l:asplund_fg}). Then, writing $z=T_1y$ and
$T=T_1T_2^{-1}$, and doing the change of variable $x=T_1^{-1}w$ we get,
using again Lemma \ref{lem:Asplund},
\[
\begin{split}
 \int_{\R^n}\sup_{y\in\R^n}e^{-u\bigl(|T_1y|_E\bigr)} & \chi_{_K}(T_2x-T_2y)\,\dlat x\\
 & =\int_{\R^n}\sup_{z\in\R^n}e^{-u(|z|_E)}\chi_{_K}(T_2T_1^{-1}w-T_2T_1^{-1}z)\,\dlat w\\
 & =\int_{\R^n}\sup_{z\in\R^n}e^{-u(|z|_E)}\chi_{_{T_1T_2^{-1}K}}(w-z)\,\dlat w\\
 & =\int_{\R^n}\left(e^{-u(|\,\cdot\,|_E)}\star\chi_{_{TK}}\right)(w)\,\dlat w
    =\int_{\R^n}\f{TK}{E}{u}(w)\,\dlat w,
\end{split}
\]
and clearly (see \eqref{e:int_f=m_nvol(E)})
\[
\int_{\R^n}e^{-u\bigl(|T_1x|_E\bigr)}\dlat x
    =\int_{\R^n}e^{-u(|x|_E)}\dlat x=\m_n^u\,\vol(E)
\]
and
\[
\int_{\R^n}\chi_{_K}(T_2x)\,\dlat x=\int_{\R^n}\chi_{_K}(x)\,\dlat
x=\vol(K).
\]
Altogether we obtain the result.
\end{proof}

In the particular case of the classical Wills functional we obtain the
following bounds.

\begin{corollary}\label{co:W(K;E)_vol(K)vol(E)}
Let $K\in\K^n$ with $0\in K$. Then there exist $T\in\SL(n)$ and an
absolute constant $C>0$ such that
\[
1+\vol(K)+\bigl(\vol(K)\kappa_n\bigr)^{1/2}\sum_{i=1}^{n-1}\frac{\binom{n}{i}}{\kappa_i}
\leq\w(K)\leq C\left(\vol(T^{-1}K)^{1/n}+1\right)^n.
\]
\end{corollary}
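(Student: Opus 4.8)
The plan is to obtain both estimates by specializing to the classical case $u=\u$ and $E=B_2^n$ the two general bounds already established for the extended functional $\w_u(\,\cdot\,;E)$; the only arithmetic input needed is that the moments of $\u$ are $\m_i^{\u}=1/\kappa_i$ (see \eqref{e:m_i_u_0}), together with $\vol(B_2^n)=\kappa_n$. All hypotheses carry over at once: $u(t)=\pi t^2$ is strictly increasing with $u(0)=0$, and $0\in\inter B_2^n$, so both the Aleksandrov-Fenchel chain culminating in \eqref{e:Wi^2>W0Wn} and Theorem \ref{t:W(K;E)_vol(K)vol(E)} apply.

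For the lower bound I would start from the inequality derived immediately after \eqref{e:Wi^2>W0Wn}, namely
\[
\w_u(K;E)\ge\vol(K)+\m_n^u\,\vol(E)+\bigl(\vol(K)\vol(E)\bigr)^{1/2}\sum_{i=1}^{n-1}\binom{n}{i}\m_i^u,
\]
and simply substitute $E=B_2^n$ and $u=\u$. The middle term becomes $\m_n^{\u}\vol(B_2^n)=(1/\kappa_n)\kappa_n=1$, the factor $\bigl(\vol(K)\vol(E)\bigr)^{1/2}$ becomes $\bigl(\vol(K)\kappa_n\bigr)^{1/2}$, and $\m_i^{\u}=1/\kappa_i$ turns the sum into $\sum_{i=1}^{n-1}\binom{n}{i}/\kappa_i$. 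This reproduces verbatim the left-hand inequality of the corollary, so the lower bound requires no further work.

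For the upper bound I would invoke Theorem \ref{t:W(K;E)_vol(K)vol(E)} with the same choices. Since $\m_n^{\u}=1/\kappa_n$, the quantity $(\m_n^{\u})^{1/n}\vol(B_2^n)^{1/n}=(\kappa_n^{-1}\kappa_n)^{1/n}=1$, so the theorem supplies $T\in\SL(n)$ and an absolute $C>0$ with $\w(TK)^{1/n}\le C\bigl(\vol(K)^{1/n}+1\bigr)$; raising to the $n$-th power gives $\w(TK)\le C^n\bigl(\vol(K)^{1/n}+1\bigr)^n$. It then remains to rewrite this in the stated form: since $T\in\SL(n)$ preserves volume, one has $\vol(K)=\vol\bigl(T^{-1}(TK)\bigr)$, and reading the estimate as a bound for the body in its Klartag-Milman position yields $\w(K)\le C\bigl(\vol(T^{-1}K)^{1/n}+1\bigr)^n$ after renaming the constant $C^n$ as $C$.

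I expect the only delicate point to be precisely this last bookkeeping step in the upper bound: one must keep careful track of which body the transformation $T$ acts upon, because $\w(\,\cdot\,)$ is \emph{not} invariant under $\SL(n)$ (only the volume appearing on the right-hand side is). Everything else is a direct substitution of the two precomputed ingredients $\m_i^{\u}=1/\kappa_i$ and $\vol(B_2^n)=\kappa_n$ into results already in hand, so no genuinely new estimate is needed.
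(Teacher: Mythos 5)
Your proposal is correct and coincides with the paper's own (implicit) derivation: the corollary is stated without a separate proof precisely as the specialization $u=\u$, $E=B_2^n$ of the inequality displayed after \eqref{e:Wi^2>W0Wn} and of Theorem~\ref{t:W(K;E)_vol(K)vol(E)}, using $\m_i^{\u}=1/\kappa_i$ and $\vol(B_2^n)=\kappa_n$ exactly as you do. Your reading of the upper bound --- that the theorem really controls $\w(TK)$, so the corollary's $K$ must be understood as the body already moved to its Klartag--Milman position (the point being that $\w(\cdot)$, unlike the volume, is not $\SL(n)$-invariant) --- is also the intended one.
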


\subsection{Relating $\w(\cdot)$ to other functionals}

Next we reprove McMullen's result, Theorem~\ref{t:properties_Wills} iv),
using a different approach (see also \cite{Vi99} for another proof
established in the setting of Gaussian processes), obtaining also a lower
bound for the Wills functional in terms of the first intrinsic volume and
the circumradius of the set: we show Theorem~\ref{t:McMullen}. To this
end, we will use the so-called Legendre transform of a convex function, as
well as some of its properties. We recall its definition.

The {\it Legendre transform} of a convex function
$f:\R^n\longrightarrow\R\cup\{\infty\}$ is defined by
\[
\L(f)(x)=\sup_{y\in\R^n}\Bigl(\langle x,y\rangle-f(y)\Bigr),
\]
which is also a convex function (see e.g.
\cite[Subsection~1.6.2]{Sch}). Directly related to the Legendre
transform we find the {\it polar function} of a log-concave
function $f:\R^n\longrightarrow\R_{\geq 0}$, which is defined as
\[
f^{\circ}(x)=e^{-\L(-\log f)(x)}.
\]
Next we prove that the Legendre transform of the Euclidean
distance from a convex body is closely related to the support
function of the set.

\begin{lemma}\label{l:L(f)}
Let $K\in\K^n$. Then
\[
\L\bigl(\pi d(\,\cdot\,,K)^2\bigr)(x)=\frac{|x|^2}{4\pi}+ h_K(x).
\]
\end{lemma}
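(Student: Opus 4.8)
The plan is to unfold the definition of the Legendre transform and reduce the two-variable optimization it involves to an elementary one-variable quadratic maximization. By definition,
\[
\L\bigl(\pi d(\,\cdot\,,K)^2\bigr)(x)=\sup_{y\in\R^n}\Bigl(\langle x,y\rangle-\pi d(y,K)^2\Bigr).
\]
The first step is to exploit that $d(y,K)^2=\min_{z\in K}|y-z|^2$, so that $-\pi d(y,K)^2=\max_{z\in K}\bigl(-\pi|y-z|^2\bigr)$. Substituting this turns the quantity inside the supremum into $\max_{z\in K}\bigl(\langle x,y\rangle-\pi|y-z|^2\bigr)$, and hence the Legendre transform becomes a joint supremum over $y\in\R^n$ and $z\in K$.

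The second step is to interchange the two suprema, writing
\[
\L\bigl(\pi d(\,\cdot\,,K)^2\bigr)(x)=\sup_{z\in K}\,\sup_{y\in\R^n}\Bigl(\langle x,y\rangle-\pi|y-z|^2\Bigr),
\]
and then to carry out the inner optimization over $y$ for fixed $z$. Setting $w=y-z$, the inner expression splits as $\langle x,z\rangle+\bigl(\langle x,w\rangle-\pi|w|^2\bigr)$; the term in brackets is a strictly concave quadratic in $w$, maximized at $w=x/(2\pi)$, where it takes the value $|x|^2/(4\pi)$. Thus the inner supremum equals $\langle x,z\rangle+|x|^2/(4\pi)$.

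The final step is to take the supremum over $z\in K$. Since $|x|^2/(4\pi)$ does not depend on $z$, it factors out and the remaining $\sup_{z\in K}\langle x,z\rangle$ is exactly $h_K(x)$ by the definition of the support function recalled in the introduction, giving the claimed identity $|x|^2/(4\pi)+h_K(x)$. I do not expect a genuine obstacle here: the only point requiring a word of justification is the interchange of the two suprema, but this is automatic since we are simply taking a supremum of one jointly-defined quantity over the product set, and it is routine to check that the inner supremum over $y$ is attained for every $z$ (so no measurability or attainment issues arise).
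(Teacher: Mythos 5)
Your proof is correct and follows essentially the same route as the paper: both rewrite $-\pi d(y,K)^2$ as a supremum over $z\in K$, interchange the two suprema, and reduce the inner optimization to the Legendre transform of the quadratic $\pi|\,\cdot\,|^2$ (the paper does this by expanding $|y-z|^2$ and invoking the precomputed $\L\bigl(\pi|\,\cdot\,|^2\bigr)(x+2\pi z)=|x+2\pi z|^2/(4\pi)$, while you achieve the same completion of the square via the translation $w=y-z$). No issues.
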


\begin{proof}
First we observe that, by the definition of the Legendre
transform,
\begin{equation}\label{e:L(||)}
\begin{split}
\L\bigl(\pi|\,\cdot|^2\bigr)(x) &
    =\sup_{y\in\R^n}\Bigl(\langle y,x\rangle-\pi|y|^2\Bigr)
    =\sup_{r\geq 0}\sup_{|y|=r}\Bigl(\langle y,x\rangle-\pi|y|^2\Bigr)\\
 &  =\sup_{r\geq 0}\bigl(r|x|-\pi r^2\bigr)=\frac{|x|^2}{4\pi}.
\end{split}
\end{equation}
On the other hand,
\[
\begin{split}
\L\bigl(\pi d(\,\cdot\,,K)^2\bigr)(x) &
    =\sup_{y\in\R^n}\!\Bigl(\!\langle y,x\rangle\!-\!\pi\!\inf_{z\in K}\!|y-z|^2\Bigr)\!
    =\!\sup_{y\in\R^n}\sup_{z\in K}\Bigl(\!\langle y,x\rangle\!-\!\pi|y-z|^2\Bigr)\\
 & =\sup_{z\in K}\sup_{y\in\R^n}\Bigl(\langle y,x\rangle-\pi|y|^2-\pi|z|^2+2\pi\langle y,z\rangle\Bigr)\\
 & =\sup_{z\in K}\sup_{y\in\R^n}\Bigl(\langle y,x+2\pi z\rangle-\pi|y|^2-\pi|z|^2\Bigr)\\
 & =\sup_{z\in K}\Bigl(\L\bigl(\pi|\,\cdot|^2\bigr)(x+2\pi z)-\pi|z|^2\Bigr).
\end{split}
\]
Then, using \eqref{e:L(||)} we conclude the result:
\[
\begin{split}
\L\bigl(\pi d(\,\cdot\,,K)^2\bigr)(x) &
    =\sup_{z\in K}\left(\frac{|x+2\pi z|^2}{4\pi}-\pi|z|^2\right)
    =\sup_{z\in K}\left(\frac{|x|^2}{4\pi}+\langle x,z\rangle\right)\\
 & =\frac{|x|^2}{4\pi}+ h_K(x).\qedhere
\end{split}
\]
\end{proof}

We are now in a position to prove Theorem \ref{t:McMullen}.

\begin{proof}[Proof of Theorem \ref{t:McMullen}]
First we note that since all the involved functionals, namely, $\V_1(K)$,
$\cir(K)$ and $\w(K)$, are invariant under translations, we may assume
that the origin is the circumcenter of $K$, i.e., that
$K\subset\cir(K)B_2^n$. Then $|y|\leq\cir(K)$ for all $y\in K$.

We start showing the lower bound. For any convex body $K\in\K^n$
we have
\[
\begin{split}
\w(K) & =\int_{\R^n}e^{-\pi d(x,K)^2}\dlat x
    =\int_{\R^n}e^{-\pi\inf_{y\in K}|x-y|^2}\dlat x\\
 & =\int_{\R^n}e^{-\pi|x|^2}e^{\pi\sup_{y\in K}\bigl(2\langle x,y\rangle-|y|^2\bigr)}\dlat x,
\end{split}
\]
and doing the change of variable $\sqrt{2\pi}\,x=z$ we get
\[
\begin{split}
\w(K) & =\int_{\R^n} \frac{e^{-\frac{|z|^2}{2}}}{(2\pi)^{n/2}}\,
    e^{\sup_{y\in K}\bigl(\sqrt{2\pi}\langle z,y\rangle-\pi|y|^2\bigr)}\dlat z\\
 & =\int_{\R^n}e^{\sup_{y\in K}\bigl(\sqrt{2\pi}\langle
 z,y\rangle-\pi|y|^2\bigr)}\dlat\gamma_n(z).
\end{split}
\]
Then
\[
\begin{split}
\w(K) & \geq\int_{\R^n}e^{\sup_{y\in K}\bigl(\sqrt{2\pi}\langle z,y\rangle
    -\pi\cir(K)^2\bigr)}\dlat\gamma_n(z)\\
 & =e^{-\pi\cir(K)^2}\!\!\int_{\R^n}e^{\sqrt{2\pi}\sup_{y\in K}\langle z,y\rangle}\dlat\gamma_n(z)
    =e^{-\pi\cir(K)^2}\!\!\int_{\R^n}\!e^{\sqrt{2\pi}\,h_K(z)}\dlat\gamma_n(z),
\end{split}
\]
and Jensen's inequality (see e.g. \cite[page~20]{Sch}) applied to
the convex function $e^x$ yields
\[
\w(K)\geq e^{-\pi\cir(K)^2}
    e^{\sqrt{2\pi}\int_{\R^n}h_K(z)\,\dlat\gamma_n(z)}.
\]
So it remains to compute the above integral in the exponential function:
\[
\begin{split}
\int_{\R^n}h_K(z)\,\dlat\gamma_n(z)
 & =\frac{1}{(2\pi)^{n/2}}\int_{\R^n}e^{-\frac{|z|^2}{2}}h_K(z)\,\dlat z\\
 & =\frac{1}{(2\pi)^{n/2}}\int_{\s^{n-1}}\int_0^\infty
    r^ne^{-\frac{r^2}{2}}h_K(u)\,\dlat r\,\dlat u\\
 & =\frac{1}{(2\pi)^{n/2}}\int_{\s^{n-1}}h_K(u)
    \left(\int_0^\infty r^ne^{-\frac{r^2}{2}}\dlat r\right)\dlat u,
\end{split}
\]
and with the change of variable $r^2/2=t$ and using \eqref{e:mean_width}
we get
\begin{equation}\label{e:int_h_k}
\int_{\R^n}h_K(z)\,\dlat\gamma_n(z)
    =\frac{2^{(n-1)/2}}{(2\pi)^{n/2}}\,\Gamma\left(\frac{n-1}{2}+1\right)\kappa_{n-1}\V_1(K)
    =\frac{1}{\sqrt{2\pi}}\V_1(K).
\end{equation}
Therefore, we conclude that
\[
\w(K)\geq e^{-\pi\cir(K)^2}
    e^{\sqrt{2\pi}\frac{1}{\sqrt{2\pi}}\V_1(K)}=e^{\V_1(K)-\pi\cir(K)^2}.
\]

To prove the upper bound we note again that, since $\V_1(K)$ and $\w(K)$
are invariant under translations, we may assume that $0\in K$. We also
observe that, by Lemma \ref{l:L(f)},
\[
\left(\f{K}{B_2^n}{\u}\right)^{\circ}(x)
    =\left(e^{-\pi d(\,\cdot\,,K)^2}\right)^{\circ}(x)
    =e^{-\L\bigl(\pi d(\,\cdot\,,K)^2\bigr)(x)}
    =e^{-\frac{|x|^2}{4\pi}-h_K(x)},
\]
and so, doing the change of variable $x=\sqrt{2\pi}z$ and using Jensen's
inequality and \eqref{e:int_h_k} we get
\[
\begin{split}
\int_{\R^n}\left(\f{K}{B_2^n}{\u}\right)^{\circ}(x)\,\dlat x &
    =\int_{\R^n}e^{-\frac{|x|^2}{4\pi}}e^{-h_K(x)}\dlat x
    =(2\pi)^{n/2}\!\int_{\R^n}\!e^{-\frac{|z|^2}{2}}e^{-\sqrt{2\pi}h_K(z)}\dlat z\\
 & =(2\pi)^n\int_{\R^n}e^{-\sqrt{2\pi}\,h_K(z)}\dlat\gamma_n(z)\\
 & \geq(2\pi)^ne^{-\sqrt{2\pi}\int_{\R^n}h_K(z)\dlat\gamma_n(z)}
    =(2\pi)^ne^{-\V_1(K)}.
\end{split}
\]
Now, since $0\in K$, then
$\bigl\|\f{K}{B_2^n}{\u}\bigr\|_{\infty}=\f{K}{B_2^n}{\u}(0)=1$,
and hence we can apply the functional version of
Blaschke-Santal\'o's inequality (\cite[Theorems~1.1 and 1.2, and
Remark (2) in page~181)]{KM}; see also \cite[Theorem~1.3]{AKM}),
namely,
\[
(2\pi)^n\geq\int_{\R^n}\f{K}{B_2^n}{\u}(x)\,\dlat x
\int_{\R^n}\left(\f{K}{B_2^n}{\u}\right)^{\circ}(x)\,\dlat x,
\]
to obtain
\[
(2\pi)^n\geq\w(K)\int_{\R^n}\left(\f{K}{B_2^n}{\u}\right)^{\circ}(x)\,\dlat
x\geq\w(K)(2\pi)^ne^{-\V_1(K)},
\]
which finishes the proof.
\end{proof}

\subsection{On $0$-symmetric convex bodies in John position}

It is well-known (\cite[Theorem~3]{B}) that the cube $[-1,1]^n$ maximizes
the volume functional among all $0$-symmetric convex bodies in John
position (see also \cite[Theorem~10.13.2]{Sch} and the references
therein). The same property also holds for the mean width (see e.g.
\cite[page~602]{Sch}): $\b(K)\leq\b\bigl([-1,1]^n\bigr)$ for every
$0$-symmetric $K\in\K^n$ which is in John position. Nevertheless, to the
best of the authors' knowledge, similar results for the remaining
intrinsic volumes are not known. Here, following Ball's idea on the
application of the Brascamp-Lieb inequality to data provided by the
conditions in John's theorem, we show that the cube $[-1,1]^n$ also
maximizes the Wills functional among all $0$-symmetric convex bodies in
John position, i.e., we prove Theorem~\ref{t:John}.

\begin{proof}[Proof of Theorem~\ref{t:John}]
Since $K$ is in John position, there exist $c_1,\dots,c_m>0$ and vectors
$v_1,\dots,v_m\in\s^{n-1}\cap\bd K$, $n\leq m\leq n(n+1)/2$, such that the
identity matrix $\I_n=\sum_{i=1}^m c_i v_i\otimes v_i$ (see
\cite[Theorem~10.12.1]{Sch}); here $v_j\otimes v_j$ is the projection in
the direction of $v_j$, i.e., $(v_j\otimes v_j)(x)=\esc{x,v_j}v_j$. In
particular, $\sum_{i=1}^m c_i=n$. From $B_2^n\subset K$ and
$v_i\in\s^{n-1}\cap\bd K$ for $i=1,\dots,m$, we get
$K\subset\bigl\{x\in\R^n:\esc{x,v_i}\leq1\bigr\}$ and thus, since $K$ is
$0$-symmetric, we have $K\subset L=\bigl\{x\in\R^n: |\esc{x,v_i}|\leq
1\bigr\}$. This implies that $\w(K)\leq\w(L)$.

As usual in the literature, we write $\e_i$ to represent the $i$-th
canonical unit vector. Let $\spann\{v\}$ denote the $1$-dimensional linear
subspace spanned by the vector $v\in\R^n$, and let $f_i(t)=e^{-\pi
d(tv_i,P_{\spann\{v_i\}}L)^2}$, $t\in\R$, for $i=1,\dots,m$. Using that
$\sum_{i=1}^mc_i=n$ together with Theorem~\ref{t:properties_Wills} i) and
the fact that $\w\bigl([-v,v]\bigr)=\w\bigl([-\e_i,\e_i]\bigr)$ for all
$v\in\s^{n-1}$ and all $i=1,\dots,n$, one gets
\[
\w\bigl([-1,1]^n\bigr)=\prod_{i=1}^n\w\bigl([-\e_i,\e_i]\bigr)
=\prod_{i=1}^m\w\bigl(P_{\spann\{v_i\}}L\bigr)^{c_i}=\prod_{i=1}^m\left(\int_{\R}f_i(t)\,\dlat
t\right)^{c_i},
\]
and the geometric Brascamp-Lieb inequality (see
\cite[Theorem~10.13.1]{Sch} and the references therein) gives
\begin{equation*}
\begin{split}
\w\bigl([-1,1]^n\bigr)=\prod_{i=1}^m
    \left(\int_{\R}f_i(t)\,\dlat t\right)^{c_i}
 & \geq\int_{\R^n}\prod_{i=1}^m f_i\bigl(\esc{x,v_i}\bigr)^{c_i}\,\dlat x\\
 & =\int_{\R^n}e^{-\pi\sum_{i=1}^m c_i d\bigl(\esc{x,v_i}v_i,P_{\spann\{v_i\}}L\bigr)^2}\,\dlat x.
\end{split}
\end{equation*}
Thus, to conclude the proof, it is enough to show that
\[
\sum_{i=1}^m c_i d\bigl(\esc{x,v_i}v_i, P_{\spann\{v_i\}}L\bigr)^2\leq
d(x,L)^2
\]
because, in that case, we have that
\[
\int_{\R^n}e^{-\pi\sum_{i=1}^m c_i d\bigl(\esc{x,v_i}v_i,
P_{\spann\{v_i\}}L\bigr)^2}\,\dlat x\geq \int_{\R^n}e^{-\pi
d(x,L)^2\,\dlat x}=\w(L)\geq\w(K).
\]
To this end we notice that, for any given $x_0\in L$, all $x\in\R^n$ and
any $i=1,\dots,m$, we have
\[
d\bigl(\esc{x,v_i}v_i,P_{\spann\{v_i\}}L\bigr)^2\leq
d\bigl(\esc{x,v_i}v_i, \esc{x_0,v_i}v_i\bigr)^2 =\esc{x-x_0,v_i}^2
\]
and thus, using that $\I_n=\sum_{i=1}^m c_i v_i\otimes v_i$,
\[
\sum_{i=1}^m c_i d\bigl(\esc{x,v_i}v_i, P_{\spann\{v_i\}}L\bigr)^2
\leq\sum_{i=1}^m c_i\esc{x-x_0,v_i}^2=|x-x_0|^2.
\]
In particular $\sum_{i=1}^m c_i d\bigl(\esc{x,v_i}v_i,
P_{\spann\{v_i\}}L\bigr)^2\leq d(x,L)^2$, as desired.
\end{proof}

\section{Brunn-Minkowski type inequalities for the Wills functional}\label{s:Wills_B-M}

Relating the volume with the Minkowski addition of convex bodies, one is
led to the famous Brunn-Minkowski inequality. One form of it states that
if $K,L\in\K^n$ are convex bodies, and $\lambda\in(0,1)$, then
\begin{equation}\label{e:B-M_ineq}
\vol\bigl((1-\lambda)K+\lambda
L\bigr)^{1/n}\geq(1-\lambda)\vol(K)^{1/n}+\lambda\vol(L)^{1/n},
\end{equation}
with equality, if $\vol(K)\vol(L)>0$, if and only if $K$ and $L$ are
homothetic. In other words, the volume functional $\vol(\cdot)$ is
$(1/n)$-concave.

The above inequality admits a generalization in the context of intrinsic
volumes. Indeed, $\V_i(\cdot)$ is a $(1/i)$-concave functional for all
$i=1,\dots,n$ (see e.g. \cite[Theorem~7.4.5]{Sch}), namely
\[
\V_i\bigl((1-\lambda)K+\lambda L\bigr)^{1/i}
\geq(1-\lambda)\V_i(K)^{1/i}+\lambda\V_i(L)^{1/i}
\]
for any $K,L\in\K^n$ and all $\lambda\in(0,1)$. In particular,
$\V_i(\cdot)$ is $(1/n)$-concave for all $i=0,\dots,n$ (we recall that
$\V_0(K)=1$ for any $K\in\K^n$) and thus one might expect that the same
holds for the Wills functional $\w(\cdot)=\sum_{i=0}^n\V_i(\cdot)$. In
general, this is not the case, as the following example shows.

\begin{example}\label{ex:Wills_not_1/n_BM}
The $i$-th intrinsic volume of the Euclidean ball is
$\V_i(rB_2^n)=\binom{n}{i}/\kappa_{n-i}\W_{n-i}(rB_2^n)=\binom{n}{i}\kappa_n/\kappa_{n-i}\,r^i$.
Then, one can see that the inequality
\[
\w\left(\frac{r+R}{2}B_2^n\right)^{1/n}\geq
\frac{1}{2}\w\left(rB_2^n\right)^{1/n}+\frac{1}{2}\w\left(RB_2^n\right)^{1/n}
\]
is, in general, not true, just taking $r=1$, $R=2$ and $n=2,3,4\dots$

The Euclidean balls $B_2^n$ and $2B_2^n$ also show that the additive
version of the Brunn-Minkowski inequality for the Wills functional,
namely,
\[
\w\left(B_2^n+2B_2^n\right)^{1/n}\geq
\w\left(B_2^n\right)^{1/n}+\w\left(2B_2^n\right)^{1/n}
\]
is, in general, not true.
\end{example}

Although, as seen, the Wills functional is not a $(1/n)$-concave
functional, it is not ``far'' from being so. Indeed, when dealing
with $K$ and $L$ orthogonal boxes we have
$\w\bigl((1-\lambda)K+\lambda
L\bigr)^{1/n}\geq(1-\lambda)\w(K)^{1/n}+\lambda\w(L)^{1/n}$: this
can be shown as a direct consequence of
Theorem~\ref{t:properties_Wills} i), using the
arithmetic-geometric mean inequality, and the fact that the Wills
functional of a segment $\ell$ is $\w(\ell)=1+\vol_1(\ell)$.
Moreover, by adding the additional constant $1/(n!)^{1/n}$ on the
right-hand side of the above inequality, it becomes true for
arbitrary convex bodies, as Theorem \ref{t:BM_W_constant} shows.
We notice that this constant is of the order of $e/n$. We state
and prove it in the more general setting of the generalized Wills
functional.
\begin{theorem}
Let $K, L, E\in\K^n$ with $0\in\inter E$, $\lambda\in(0,1)$ and
$u\in\co(\R_{\geq 0})$ strictly increasing. Then
\begin{equation}\label{e_BM_Wills_Berw}
\w_{u}\bigl((1-\lambda)K+\lambda L;E\bigr)^{1/n}
\geq\!\frac{e^{-\frac{n-1}{n}u(0)}}{(n!)^{1/n}}\left(\!(1\!-\!\lambda)\w_{u}(K;E)^{1/n}\!
+\!\lambda\w_{u}(L;E)^{1/n}\!\right).
\end{equation}
\end{theorem}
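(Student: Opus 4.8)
The plan is to turn the statement into a one-dimensional inequality between concave functions and then apply Berwald's inequality, which is what produces the factorial constant. First I would use Proposition~\ref{lem:ComputationIntegral} to write, for an arbitrary $M\in\K^n$,
\[
\w_u(M;E)=\int_{u(0)}^{\infty}\vol\bigl(M+u^{-1}(s)E\bigr)e^{-s}\,\dlat s=\int_{u(0)}^{\infty}\Theta_M(s)^n\,e^{-s}\,\dlat s,
\]
where I set $\Theta_M(s)=\vol\bigl(M+u^{-1}(s)E\bigr)^{1/n}$. The crucial observation is that $\Theta_M$ is concave and increasing on $[u(0),\infty)$: by the Steiner formula~\eqref{e:Steinerform} together with the Brunn--Minkowski inequality~\eqref{e:B-M_ineq}, the map $t\mapsto\vol(M+tE)^{1/n}$ is concave and increasing, while $s\mapsto u^{-1}(s)$ is concave and increasing as the inverse of the convex increasing function $u$, and the composition inherits both properties. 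Applying this to $K$, $L$ and $M=(1-\lambda)K+\lambda L$, and writing $M+u^{-1}(s)E=(1-\lambda)\bigl(K+u^{-1}(s)E\bigr)+\lambda\bigl(L+u^{-1}(s)E\bigr)$, the Brunn--Minkowski inequality also yields the pointwise bound $\Theta_M(s)\geq(1-\lambda)\Theta_K(s)+\lambda\Theta_L(s)$.

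The core of the argument, and the step I expect to be the main obstacle, is a Berwald-type inequality for the weight $e^{-s}$: for every concave function $g\colon[u(0),\infty)\to\R_{\geq0}$,
\[
\left(\int_{u(0)}^{\infty}g(s)^n\,e^{-s}\,\dlat s\right)^{1/n}\leq e^{\frac{n-1}{n}u(0)}(n!)^{1/n}\int_{u(0)}^{\infty}g(s)\,e^{-s}\,\dlat s.
\]
After normalizing $e^{-s}\,\dlat s$ to a probability measure on $[u(0),\infty)$ and translating the variable to the half-line $[0,\infty)$, this is precisely the classical Berwald inequality comparing the $L^n$- and $L^1$-means of a concave function; the extremal case is the affine function $s\mapsto s-u(0)$, for which $\int_0^{\infty}t^n e^{-t}\,\dlat t=n!\bigl(\int_0^{\infty}t\,e^{-t}\,\dlat t\bigr)^n$, and this is exactly where the constant $(n!)^{1/n}$ enters. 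Concavity is used only here.

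It then remains to chain these facts through the linear functional $\ell(g)=\int_{u(0)}^{\infty}g(s)\,e^{-s}\,\dlat s$. Using the pointwise bound of the first paragraph and the monotonicity of the $L^n$-norm, and then Jensen's inequality (the weight $e^{-s}$ having total mass $e^{-u(0)}\leq1$ on $[u(0),\infty)$) to bound the $L^n$-norm of $(1-\lambda)\Theta_K+\lambda\Theta_L$ below by $\ell\bigl((1-\lambda)\Theta_K+\lambda\Theta_L\bigr)$, I obtain
\[
\w_u(M;E)^{1/n}\geq\ell\bigl((1-\lambda)\Theta_K+\lambda\Theta_L\bigr)=(1-\lambda)\ell(\Theta_K)+\lambda\ell(\Theta_L),
\]
where the equality is the linearity of $\ell$. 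Finally, applying the Berwald estimate of the second paragraph to $\Theta_K$ and to $\Theta_L$, i.e. $\ell(\Theta_K)\geq e^{-\frac{n-1}{n}u(0)}(n!)^{-1/n}\w_u(K;E)^{1/n}$ and likewise for $L$, gives the claimed inequality. Apart from the Berwald step, the proof is bookkeeping with Brunn--Minkowski, Jensen's inequality, and the linearity of $\ell$.
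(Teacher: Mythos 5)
Your proposal is correct and follows essentially the same route as the paper's proof: reduce to the one-dimensional profile $s\mapsto\vol(\cdot+u^{-1}(s)E)^{1/n}$ via Proposition~\ref{lem:ComputationIntegral}, apply Brunn--Minkowski pointwise, use Jensen/H\"older to pass from the $L^n$- to the $L^1$-mean on the combination side, and invoke the extension of Berwald's inequality for the weight $e^{-s}$ (the paper cites \cite[Lemma~2.1]{ABG2} for exactly the estimate you isolate as the crux, after the same change of variable $t=s-u(0)$). The only differences are the trivial reordering of the Jensen and Brunn--Minkowski steps and your explicit verification that $s\mapsto\vol(K+u^{-1}(s)E)^{1/n}$ is concave, a hypothesis the paper leaves implicit.
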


\begin{proof}
For the sake of brevity we set $M_\lambda=(1-\lambda)K+\lambda L$. The
Brunn-Minkowski inequality \eqref{e:B-M_ineq} implies that
\[
\vol\bigl(M_\lambda+u^{-1}(s)E\bigr)^{1/n}
\geq(1-\lambda)\vol\bigl(K+u^{-1}(s)E\bigr)^{1/n}+\lambda\vol\bigl(L+u^{-1}(s)E\bigr)^{1/n}
\]
for all $s\geq u(0)$, and then we clearly get
\begin{equation}\label{e_provingBM_Berw_1}
\begin{split}
 \int_{u(0)}^\infty\!\vol\bigl(M_\lambda+u^{-1}(s)E\bigr)^{1/n}e^{-s}\,\dlat s
 & \geq(1-\lambda)\!\int_{u(0)}^\infty\!\vol\bigl(K+u^{-1}(s)E\bigr)^{1/n}e^{-s}\,\dlat s\\
 & \quad +\lambda\int_{u(0)}^\infty\vol\bigl(L+u^{-1}(s)E\bigr)^{1/n}e^{-s}\,\dlat s.
\end{split}
\end{equation}
On one hand, by Jensen's inequality, together with
\eqref{e:ComputationIntegral}, we have
\begin{equation}\label{e_provingBM_Berw_2}
\begin{split}
 \int_{u(0)}^\infty\vol\bigl(M_\lambda+u^{-1}(s)E\bigr)^{1/n}e^{-s}\,\dlat s
 & \leq\left(\int_{u(0)}^\infty\vol\bigl(M_\lambda+u^{-1}(s)E\bigr)e^{-s}\,\dlat s\right)^{1/n}\\
 & =\w_{u}(M_\lambda;E)^{1/n};
\end{split}
\end{equation}
on the other hand, doing the change of variable $t=s-u(0)$, we can apply
the extension of Berwald's inequality (cf. \cite[Satz 8]{Ber}) which is
proved in \cite[Lemma~2.1]{ABG2}, and with \eqref{e:ComputationIntegral}
we obtain
\begin{equation}\label{e_provingBM_Berw_3}
\begin{split}
\int_{u(0)}^\infty\!\!\vol\bigl(K\!+\!u^{-1}(s)E\bigr)^{1/n}e^{-s}\dlat s
 & \geq\!\frac{e^{-\frac{n-1}{n}u(0)}}{(n!)^{1/n}}\!\left(\!\int_{u(0)}^\infty\!\!\!\vol\bigl(K\!+\!u^{-1}(s)E\bigr)e^{-s}\dlat s\!\right)^{\!\!1/n}\\
 & =\frac{e^{-\frac{n-1}{n}u(0)}}{(n!)^{1/n}}\,\w_{u}(K;E)^{1/n}.
\end{split}
\end{equation}
The same holds for $L$ in place of $K$. Altogether, from
\eqref{e_provingBM_Berw_1}, \eqref{e_provingBM_Berw_2} and
\eqref{e_provingBM_Berw_3}, we get \eqref{e_BM_Wills_Berw}.
\end{proof}

In the case of the classical Wills functional, from
\eqref{e_BM_Wills_Berw} for $u=\u$ and $E=B_2^n$, we get
Theorem~\ref{t:BM_W_constant}:
\[
\w\bigl((1-\lambda)K+\lambda L\bigr)^{1/n}
\geq\frac{1}{(n!)^{1/n}}\left((1-\lambda)\w(K)^{1/n}
+\lambda\w(L)^{1/n}\right).
\]

Aiming to get a ``real'' concavity property for the Wills functional, we
will exploit its integral formula via a log-concave function (cf.
\eqref{e:ComputationIntegral}) as well as the corresponding machinery: the
Pr\'ekopa-Leindler inequality (Theorem~\ref{t:BBL} for $p=0$). This gives
rise to Theorem~\ref{t:BM_mult_W}. Again we state and prove it in the
general setting: we see that the generalized Wills functional is
log-concave.

\begin{theorem}
Let $E\in\K^n$ with $0\in\inter E$ and let $u\in\co(\R_{\geq 0})$. Then
$\w_{u}(\,\cdot\,;E)$ is log-concave, i.e., for any $K,L\in\K^n$ and all
$\lambda\in(0,1)$,
\begin{equation}\label{e:BMWills_u_E}
\w_{u}\bigl((1-\lambda)K+\lambda
L;E\bigr)\geq\w_{u}(K;E)^{1-\lambda}\w_{u}(L;E)^\lambda.
\end{equation}
\end{theorem}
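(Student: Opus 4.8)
The plan is to apply the Pr\'ekopa--Leindler inequality (Theorem~\ref{t:BBL} with $p=0$) directly to the three functions
\[
f=\f{K}{E}{u},\qquad g=\f{L}{E}{u},\qquad h=\f{M_\lambda}{E}{u},
\]
where $M_\lambda=(1-\lambda)K+\lambda L$. Since $\w_{u}(K;E)=\int_{\R^n}\f{K}{E}{u}(x)\,\dlat x$ by definition, the desired conclusion \eqref{e:BMWills_u_E} is precisely the output \eqref{e:PrekopaLeindler} of Pr\'ekopa--Leindler for this choice of $f,g,h$. Hence the only thing that needs checking is the pointwise hypothesis
\[
h\bigl((1-\lambda)x+\lambda y\bigr)\geq f(x)^{1-\lambda}g(y)^{\lambda}\qquad\text{for all }x,y\in\R^n.
\]

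First I would reduce this hypothesis to a convexity statement for the relative distance $d_E$. Recalling that $\f{K}{E}{u}(x)=e^{-u(d_E(x,K))}$ and taking $-\log$, the pointwise inequality above is equivalent to
\[
u\Bigl(d_E\bigl((1-\lambda)x+\lambda y,M_\lambda\bigr)\Bigr)
\leq(1-\lambda)\,u\bigl(d_E(x,K)\bigr)+\lambda\,u\bigl(d_E(y,L)\bigr).
\]
Because $u\in\co(\R_{\geq 0})$ is increasing and convex, it suffices to establish the purely geometric distance inequality
\[
d_E\bigl((1-\lambda)x+\lambda y,M_\lambda\bigr)\leq(1-\lambda)\,d_E(x,K)+\lambda\,d_E(y,L);
\]
then the monotonicity of $u$ allows me to insert this estimate inside $u(\cdot)$, and the convexity of $u$ splits the resulting right-hand side into the required convex combination.

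The heart of the matter is thus the distance inequality, which follows immediately from the definition \eqref{e:d_E(x,K)} of $d_E$ as a Minkowski-containment threshold. Writing $s=d_E(x,K)$ and $t=d_E(y,L)$, we have $x\in K+sE$ and $y\in L+tE$, whence
\[
(1-\lambda)x+\lambda y\in(1-\lambda)(K+sE)+\lambda(L+tE)
=M_\lambda+\bigl((1-\lambda)s+\lambda t\bigr)E,
\]
where the equality uses the additivity of Minkowski addition under dilations of the common body $E$. By \eqref{e:d_E(x,K)} this yields $d_E\bigl((1-\lambda)x+\lambda y,M_\lambda\bigr)\leq(1-\lambda)s+\lambda t$, as required. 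I do not expect a genuine obstacle here: the entire content of the statement is the joint convexity of $d_E(\,\cdot\,,\cdot)$ with respect to $E$-dilations, after which the log-concavity of $\w_{u}(\,\cdot\,;E)$ is a one-line consequence of Pr\'ekopa--Leindler, valid since $0\in\inter E$ guarantees that $\w_{u}(K;E)$ and $\w_{u}(L;E)$ are finite and positive.
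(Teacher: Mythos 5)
Your proposal is correct and follows essentially the same route as the paper: both reduce the Pr\'ekopa--Leindler hypothesis to the convexity estimate $d_E\bigl((1-\lambda)x+\lambda y,(1-\lambda)K+\lambda L\bigr)\leq(1-\lambda)d_E(x,K)+\lambda d_E(y,L)$ and then invoke the monotonicity and convexity of $u$. The only cosmetic difference is that you verify the distance inequality via the Minkowski-containment characterization of $d_E$, whereas the paper cites the triangle inequality for $|\cdot|_E$; these are the same observation.
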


\begin{proof}
By the triangle inequality for $|\,\cdot|_E$ we have that
\begin{equation}\label{e:relation_distances}
d_E\bigl((1-\lambda)x+\lambda y, (1-\lambda)K+\lambda L\bigr)
\leq(1-\lambda)d_E(x,K)+\lambda d_E(y,L)
\end{equation}
for every $x,y\in\R^n$, and hence, from the convexity and monotonicity of
$u$,
\[
u\Bigl(d_E\bigl((1-\lambda)x+\lambda y, (1-\lambda)K+\lambda L\bigr)\Bigr)
\leq (1-\lambda)u\bigl(d_E(x,K)\bigr)+\lambda u\bigl(d_E(y,L)\bigr).
\]
Therefore, the functions $f=\f{K}{E}{u}$, $g=\f{L}{E}{u}$ and
$h=\f{(1-\lambda)K+\lambda L}{E}{u}$ are in the conditions of the
Pr\'ekopa-Leindler inequality (Theorem~\ref{t:BBL} for $p=0$), and thus
\eqref{e:PrekopaLeindler} yields \eqref{e:BMWills_u_E}.
\end{proof}

\begin{remark}\label{r:equal_proj}
Under the assumption that the involved convex bodies have a common
projection onto a hyperplane, the concavity of the Wills
functional can be improved. More precisely, if $K,L\in\K^n$ are
convex bodies with non-empty interior such that $P_HK=P_HL$ for
some hyperplane $H\in\G(n,n-1)$ then, for any $E\in\K^n$ with
$0\in\inter E$, all $\lambda\in(0,1)$ and any $u\in\co(\R_{\geq
0})$,
\[
\w_{u}\bigl((1-\lambda)K+\lambda L;E\bigr)\geq(1-\lambda)\w_{u}(K;E)+\lambda\w_{u}(L;E).
\]
Indeed, this follows from the concavity of the relative
quermassintegrals in this setting (see e.g.
\cite[Theorem~7.7.2]{Sch}) jointly with
Proposition~\ref{lem:ComputationIntegral}, namely,
$\w_{u}(\,\cdot\,;E)=\sum_{i=0}^n\binom{n}{i}\m_i^u\W_i(\,\cdot\,;E)$.
\end{remark}

Coming back to the classical Wills functional, from \eqref{e:BMWills_u_E}
for $u=\u$ and $E=B_2^n$, we have Theorem~\ref{t:BM_mult_W}:
\[
\w\bigl((1-\lambda)K+\lambda L\bigr)\geq\w(K)^{1-\lambda}\w(L)^\lambda.
\]
Without any extra assumption on the convex bodies (cf.
Remark~\ref{r:equal_proj}), this (log-)concavity seems not
possible to be improved: considering again the Wills functionals
of Euclidean balls (see Example~\ref{ex:Wills_not_1/n_BM}),
numerical computations show that for the balls $0.2B_2^n$ and
$0.05B_2^n$ (with $\lambda=1/2$), the $(1/(n+1))$-concavity does
not hold in general for $n>2$, i.e.,
\[
\w\left(\frac{0.25}{2}B_2^n\right)^{1/(n+1)}\!\!
<\frac{1}{2}\w\left(0.2B_2^n\right)^{1/(n+1)}+\frac{1}{2}\w\left(0.05B_2^n\right)^{1/(n+1)}
\;\text{ for } n>2;
\]
moreover, these numerical calculations for the previous balls
suggest that for any $p>3$, there exists a value of the dimension
$n<p$ such that the $(1/p)$-concavity does not hold.

However, dimension $n=2$ is a singular case: here, the classical Wills
functional is $(1/(n+1))$-concave; we recall that, even in the planar
case, the Wills functional is not $(1/n)$-concave, as mentioned in Example
\ref{ex:Wills_not_1/n_BM}. This is the content of the following result, in
which we exploit a suitable generalized Wills functional to derive
additional information for the classical one.

\begin{theorem}\label{t:planarWillsBM}
Let $K,L\in\K^2$ and $\lambda\in(0,1)$. Then
\begin{equation}\label{e:planarWillsBM}
\w\bigl((1-\lambda)K+\lambda
L\bigr)^{1/3}\geq(1-\lambda)\w(K)^{1/3}+\lambda\w(L)^{1/3}.
\end{equation}
\end{theorem}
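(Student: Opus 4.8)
The plan is to lift the planar Wills functional to an honest volume in $\R^3$, so that the exponent $1/3$ appears as the reciprocal of the dimension in the Brunn--Minkowski inequality \eqref{e:B-M_ineq}. Recall from \eqref{e:FWills} that for $K\in\K^2$ one has $\w(K)=\V_0(K)+\V_1(K)+\V_2(K)=1+\V_1(K)+\vol_2(K)$, and that $\V_1(K)=\W_1(K)$ in the plane (by the definition $\V_i(K)=\binom{n}{i}\kappa_{n-i}^{-1}\W_{n-i}(K)$ with $n=2$). I identify $\R^2$ with the hyperplane $\e_3^{\bot}\subset\R^3$, so $K\subset\e_3^{\bot}$, and I introduce a solid of revolution about the $\e_3$-axis, namely $C=\bigl\{(y,s):0\le s\le h,\ |y|\le r(s)\bigr\}$ for a nonnegative concave profile $r$ on $[0,h]$, so that $C\in\K^3$. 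Slicing $K+C$ at height $s$ produces the planar Minkowski sum $K+r(s)B_2^2$, so that Cavalieri's principle together with the planar Steiner formula \eqref{e:Steinerform} (which gives $\vol_2(K+rB_2^2)=\vol_2(K)+2r\,\V_1(K)+\pi r^2$) yields
\[
\vol_3(K+C)=\int_0^h\vol_2\bigl(K+r(s)B_2^2\bigr)\,\dlat s
=h\,\vol_2(K)+2\Bigl(\int_0^h r(s)\,\dlat s\Bigr)\V_1(K)+\pi\int_0^h r(s)^2\,\dlat s.
\]

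The key step is to choose the profile $r$ so that the three coefficients above coincide, turning $\vol_3(K+C)$ into a constant multiple of $1+\V_1(K)+\vol_2(K)=\w(K)$. Taking $C$ to be a frustum, i.e. $r$ affine with $r(0)=R_0$ and $r(h)=R_1$, one has $\int_0^h r\,\dlat s=\tfrac12(R_0+R_1)h$ and $\int_0^h r^2\,\dlat s=\tfrac{h}{3}(R_0^2+R_0R_1+R_1^2)$, so the two equalities required to match the coefficients,
\[
h=(R_0+R_1)h=\frac{\pi h}{3}\bigl(R_0^2+R_0R_1+R_1^2\bigr),
\]
reduce to $R_0+R_1=1$ and $R_0R_1=(\pi-3)/\pi$. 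These admit the positive solution $R_0,R_1=\tfrac12\bigl(1\pm\sqrt{(12-3\pi)/\pi}\bigr)$, the discriminant $(12-3\pi)/\pi$ being positive. For this particular $C$ one gets the identity $\vol_3(K+C)=h\,\w(K)$ for every $K\in\K^2$.

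With this realization in hand the conclusion is immediate. Since $C$ is convex, $(1-\lambda)(K+C)+\lambda(L+C)=\bigl((1-\lambda)K+\lambda L\bigr)+C$, and applying the Brunn--Minkowski inequality \eqref{e:B-M_ineq} in $\R^3$ to the convex bodies $K+C$ and $L+C$ gives
\[
\bigl(h\,\w((1-\lambda)K+\lambda L)\bigr)^{1/3}
=\vol_3\bigl((1-\lambda)(K+C)+\lambda(L+C)\bigr)^{1/3}
\ge(1-\lambda)\vol_3(K+C)^{1/3}+\lambda\vol_3(L+C)^{1/3}.
\]
Dividing by $h^{1/3}$ and using $\vol_3(K+C)=h\,\w(K)$ (and likewise for $L$) yields exactly \eqref{e:planarWillsBM}.

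I expect the main difficulty to be conceptual rather than computational. The most natural three-dimensional lift of $\w(K)$, via the Gaussian representation \eqref{e:representacionesWills}, gives only the \emph{weighted} volume $2\pi\int_0^{\infty}\vol_2(K+tB_2^2)\,t\,e^{-\pi t^2}\,\dlat t$, to which Brunn--Minkowski cannot be applied directly. The whole point is to trade this Gaussian weight for the slice-profile of a genuine convex solid of revolution $C$ that still reproduces the coefficient vector $(1,1,1)$ of $1+\V_1+\vol_2$, so that $\w(K)$ becomes, up to the scalar $h$, the actual volume of $K+C$. Verifying that such a convex $C$ exists --- equivalently, the positivity of the discriminant $(12-3\pi)/\pi$ --- is precisely where the special arithmetic of the plane enters, and it is exactly this step that one should not expect to survive verbatim in higher dimensions.
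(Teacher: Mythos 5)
Your proof is correct, and it reaches \eqref{e:planarWillsBM} by a genuinely different route from the paper's. The paper stays in $\R^2$: it constructs an increasing density $\phi(t)=a_1+a_2t+a_3t^2$ on an interval $[0,a]$ with moments $\int_0^a\phi(t)t^i\,\dlat t=1/\kappa_i$ for $i=0,1,2$, so that the tail function $G(t)=\mu\bigl([t,\infty)\bigr)$ is decreasing and concave and $\int_{\R^2}G\bigl(d(x,M)\bigr)\,\dlat x=\w(M)$, and then applies the Borell--Brascamp--Lieb inequality with $p=1$ (whence $q=p/(np+1)=1/3$) to the concave functions $G\bigl(d(\cdot,K)\bigr)$, $G\bigl(d(\cdot,L)\bigr)$ and $G\bigl(d(\cdot,(1-\lambda)K+\lambda L)\bigr)$. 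You instead realize $h\,\w(K)$ as the genuine three-dimensional volume $\vol_3(K+C)$ of $K$ plus a fixed convex frustum $C$, and invoke only the classical Brunn--Minkowski inequality in $\R^3$; all of your computations (the slicing identity, the moment conditions $R_0+R_1=1$ and $R_0R_1=(\pi-3)/\pi$, and the positivity of the roots) check out. The underlying existence question is the same in both arguments --- a probability measure on $\R_{\geq0}$ with moments $1$, $1/2$, $1/\pi$ and a prescribed structure (an increasing density there; the uniform measure on an interval $[R_0,R_1]\subset(0,\infty)$, i.e.\ the distribution of your affine profile, here) --- and both hinge on the planar numerology $3<\pi<4$. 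What your version buys is transparency: the verification is a single quadratic with discriminant $(12-3\pi)/\pi>0$, rather than a $3\times3$ linear system whose positivity is checked numerically at $a=0.91$, and the tool is Brunn--Minkowski itself rather than Borell--Brascamp--Lieb. The two proofs are in fact two faces of one construction: since $G$ is a decreasing concave bijection of $[0,a]$ onto $[0,1]$, its inverse is again decreasing and concave, the set $\bigl\{(x,s):0\le s\le G(d(x,K))\bigr\}$ equals $K+C_G$ with $C_G=\bigl\{(y,s):0\le s\le1,\ |y|\le G^{-1}(s)\bigr\}$ a convex solid of revolution, and Borell--Brascamp--Lieb with $p=1$ in $\R^2$ is exactly Brunn--Minkowski in $\R^3$ applied to these bodies; your frustum is the affine-profile instance of this picture.
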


\begin{proof}
First we assume that there exists an increasing non-negative continuous
function $\phi:[0,a]\longrightarrow\R_{\geq0}$, for some $a>0$, such that
\begin{equation}\label{eqs_moments}
\int_0^a\phi(t)t^i\,\dlat t=\frac{1}{\kappa_i}, \quad \text{ for }
i=0,1,2.
\end{equation}
Then, the function $G:\R_{\geq0}\longrightarrow\R_{\geq0}$ defined by
$G(t)=\mu\bigl([t,\infty)\bigr)$, where $\mu$ is the measure on
$\R_{\geq0}$, concentrated on $[0,a]$, given by
$\dlat\mu(t)=\phi(t)\,\dlat t$, satisfies that (see
\cite[Lemma~1.1]{HCY15} and the references therein)
\begin{equation}\label{e:relationWills_G}
\int_{\R^2}G\bigl(d(x,M)\bigr)\,\dlat x=\w(M)
\end{equation}
for any convex body $M\in\K^2$. We consider the functions
$f=G\bigl(d(\,\cdot\,,K)\bigr)$, $g=G\bigl(d(\,\cdot\,,L)\bigr)$
and $h=G\bigl(d\bigl(\,\cdot\,,(1-\lambda) K+\lambda
L\bigr)\bigr)$. By the fundamental theorem of calculus, the
derivative of $G$ fulfils $G'(t)=-\phi(t)$ for all $t\in[0,a]$,
which, from the monotonicity of $\phi$, implies that $G$ is
concave on $[0,a]$. This, together with
\eqref{e:relation_distances} for $E=B_2^n$ and the fact that $G$
is a decreasing function, yields $h\bigl((1-\lambda)x+\lambda
y\bigr)\geq (1-\lambda)f(x)+\lambda g(y)$ for all $x,y\in\R^2$
such that $f(x)g(y)>0$. Thus, by the Borell-Brascamp-Lieb
inequality (Theorem \ref{t:BBL}), we get that
\begin{equation*}
\begin{split}
\int_{\R^2}G\Bigl( & d\bigl(x,(1-\lambda) K+\lambda L\bigr)\Bigr)\,\dlat x\\
 & \geq\left[(1-\lambda)\left(\int_{\R^2}G\bigl(d(x,K)\bigr)\,\dlat x\right)^{1/3}
    +\lambda\left(\int_{\R^2}G\bigl(d(x,L)\bigr)\,\dlat x\right)^{1/3}\right]^3,
\end{split}
\end{equation*}
which, using \eqref{e:relationWills_G}, gives \eqref{e:planarWillsBM}.

To conclude we prove the existence of such a function $\phi$. We define it
by $\phi(t)=a_1+a_2t+a_3t^2$, for suitable $a_1,a_2,a_3\in\R$ to be
determined later on. Then, conditions \eqref{eqs_moments} yield the system
of linear equations in $(a_1,a_2,a_3)$
\[
\left\{
\begin{array}{ccccc}
1&=&\displaystyle\int_0^a\phi(t)\,\dlat t&=& \displaystyle a_1a+\frac{a_2a^2}{2}+\frac{a_3a^3}{3},\\[3mm]
\displaystyle \frac{1}{2}&=&\displaystyle\int_0^a\phi(t)t\,\dlat t &=& \displaystyle\frac{a_1a^2}{2}+\frac{a_2a^3}{3}+\frac{a_3a^4}{4},\\[3mm]
\displaystyle\frac{1}{\pi}&=&\displaystyle\int_0^a\phi(t)t^2\,\dlat t &=&
\displaystyle\frac{a_1a^3}{3}+\frac{a_2a^4}{4}+\frac{a_3a^5}{5},
\end{array}\right.
\]
whose solution is
\[
\begin{split}
a_1 & =\frac{9\pi a^2-18\pi a+30}{\pi a^3},\quad
    a_2=\frac{-36\pi a^2+96\pi a-180}{\pi a^4},\\
a_3 & =\frac{30\pi a^2-90\pi a+180}{\pi a^5}.
\end{split}
\]
By studying the sign of the above quadratic polynomials in the variable
$a$, one finds we can choose an appropriate value of $a$, for instance
$a=0.91$, so that $a_i>0$ for all $i=1,2,3$; therefore, $\phi$ is
non-negative and increasing on $[0,a]$. This finishes the proof.
\end{proof}

We conclude this section by showing that, although the additive version of
the Brunn-Minkowski inequality does not hold for the Wills functional (cf.
Example~\ref{ex:Wills_not_1/n_BM}), $\w(\cdot)$ satisfies a reverse
Brunn-Minkowski inequality with exponent $1/n$: we prove
Theorem~\ref{t:reverse_BM_W}. Indeed, we state and show it in the more
general setting of the generalized Wills functional.
\begin{theorem}\label{t:reverse_BM_W_gen}
Let $K,L,E\in\K^n$ with $0\in\inter E$ and let $u\in\co(\R_{\geq 0})$ with
$u(0)=0$. Then there exist $T\in\SL(n)$ and an absolute constant $C>0$
such that
\begin{equation*}
\w_u(K+TL;E)^{1/n}\leq C\left[\w_u(K;E)^{1/n}+\w_u(L;E)^{1/n}\right].
\end{equation*}
\end{theorem}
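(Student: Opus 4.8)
The plan is to realize the generalized Wills functional as the integral of the log-concave function $\f{K}{E}{u}=e^{-u(|\,\cdot\,|_E)}\star\chi_{_K}$ (Lemma~\ref{lem:Asplund}) and to feed the reverse Brunn--Minkowski inequality of Klartag and Milman (Theorem~\ref{t:Klartag_Milman}) with $f=\f{K}{E}{u}$ and $g=\f{L}{E}{u}$. Since all the involved functionals are translation invariant, I would first assume $0\in K$ and $0\in L$; then, because $u(0)=0$, both $f$ and $g$ attain their supremum value $1$ at the origin, so the normalization hypotheses $\|f\|_{\infty}=\|g\|_{\infty}=f(0)=g(0)=1$ of Theorem~\ref{t:Klartag_Milman} hold (and $f,g$ are log-concave with finite positive integrals $\w_{u}(K;E)$, $\w_{u}(L;E)$, the inequality being trivial if these are infinite). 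Theorem~\ref{t:Klartag_Milman} then yields $T_1,T_2\in\SL(n)$ and an absolute constant $C>0$ with
\[
\left(\int_{\R^n}\bigl((f\circ T_1)\star(g\circ T_2)\bigr)(x)\,\dlat x\right)^{1/n}
\leq C\left[\w_{u}(K;E)^{1/n}+\w_{u}(L;E)^{1/n}\right],
\]
where I have used that $T_1,T_2$ are volume preserving to identify $\int_{\R^n}(f\circ T_1)=\int_{\R^n}f=\w_{u}(K;E)$, and likewise for $g$.

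It remains to bound $\w_{u}(K+TL;E)$ from above by the left-hand integral for a suitable $T\in\SL(n)$. First I would invoke Lemma~\ref{l:asplund_fg} (and volume preservation again) to rewrite $\int_{\R^n}(f\circ T_1)\star(g\circ T_2)=\int_{\R^n}f\star(g\circ S)$, with $S=T_2T_1^{-1}\in\SL(n)$. The crux is then to understand $f\star(g\circ S)$. A direct computation from the definition of $d_E$ in \eqref{e:d_E(x,K)} shows $\f{L}{E}{u}\circ S=\f{S^{-1}L}{S^{-1}E}{u}$, so Lemma~\ref{lem:Asplund} (valid since $0\in\inter(S^{-1}E)$) gives the decompositions $f=\psi\star\chi_{_K}$ and $g\circ S=\phi\star\chi_{_{S^{-1}L}}$, where $\psi=e^{-u(|\,\cdot\,|_E)}$ and $\phi=e^{-u(|\,\cdot\,|_{S^{-1}E})}$. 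By associativity and commutativity of the Asplund product, together with $\chi_{_K}\star\chi_{_{S^{-1}L}}=\chi_{_{K+S^{-1}L}}$,
\[
f\star(g\circ S)=(\psi\star\phi)\star\chi_{_{K+S^{-1}L}}.
\]

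The key elementary fact is that $\psi\star\phi\geq\psi$ pointwise: since $u(0)=0$ we have $\phi(0)=1$, whence $(\psi\star\phi)(z)\geq\psi(z)\phi(0)=\psi(z)$. As the Asplund product is monotone in its first factor, this gives $(\psi\star\phi)\star\chi_{_{K+S^{-1}L}}\geq\psi\star\chi_{_{K+S^{-1}L}}=\f{K+S^{-1}L}{E}{u}$ (Lemma~\ref{lem:Asplund} once more). Setting $T=S^{-1}=T_1T_2^{-1}\in\SL(n)$ and integrating, I obtain $\w_{u}(K+TL;E)=\int_{\R^n}\f{K+TL}{E}{u}\leq\int_{\R^n}f\star(g\circ S)$, which, combined with the displayed Klartag--Milman estimate, finishes the proof.

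I expect the main obstacle to be exactly the mismatch of gauge introduced by the $\SL(n)$ maps supplied by Theorem~\ref{t:Klartag_Milman}: composing with $S$ turns $E$ into $S^{-1}E$, so one cannot directly recover $\f{K+TL}{E}{u}$ on the left. The resolution above is that the spurious factor $\phi$, whatever gauge it encodes, can only help, since $\phi(0)=1$ forces $\psi\star\phi\geq\psi$; thus the unwanted transformation of $E$ is harmless in the final one-sided estimate, and only the genuine Minkowski sum $K+S^{-1}L$ survives.
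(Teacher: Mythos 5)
Your proposal is correct and follows essentially the same route as the paper's proof: both apply Theorem~\ref{t:Klartag_Milman} to $\f{K}{E}{u}$ and $\f{L}{E}{u}$ (normalized at the origin via $u(0)=0$), pass to $f\star(g\circ S)$ with $S=T_2T_1^{-1}$ by Lemma~\ref{l:asplund_fg}, decompose via Lemma~\ref{lem:Asplund} into $(\psi\star\phi)\star\chi_{_{K+S^{-1}L}}$, and absorb the spurious gauge factor $\phi$ using $\phi(0)=1$ and monotonicity of the Asplund product. The only (harmless) cosmetic difference is that you phrase the transformed gauge as $|\,\cdot\,|_{S^{-1}E}$ while the paper writes $e^{-u(|T(\cdot)|_E)}$; these are the same function.
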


\begin{proof}
We may assume without loss of generality that $0\in K\cap L$ because the
involved functionals are invariant under translations.

Since
$\bigl\|\f{K}{E}{u}\bigr\|_{\infty}=\bigl\|\f{L}{E}{u}\bigr\|_{\infty}=\f{K}{E}{u}(0)=\f{L}{E}{u}(0)=1$,
Theorem~\ref{t:Klartag_Milman} yields the existence of $T_1,T_2\in\SL(n)$
and an absolute constant $C>0$ such that
\[
\begin{split}
\biggl(\int_{\R^n}\bigl((\f{K}{E}{u} & \circ T_1)\star(\f{L}{E}{u}\circ T_2)\bigr)(x)\,\dlat x\biggr)^{1/n}\\
 & \leq C\left[\left(\int_{\R^n}(\f{K}{E}{u}\circ T_1)(x)\,\dlat x\right)^{1/n}
    +\left(\int_{\R^n}(\f{L}{E}{u}\circ T_2)(x)\,\dlat x\right)^{1/n}\right].
\end{split}
\]
On one hand, denoting by $f=\f{K}{E}{u}\circ T_1$ and $g=\f{L}{E}{u}\circ
T_2$, we get
\[
\int_{\R^n}f(x)\,\dlat x=\int_{\R^n}e^{-u\bigl(d_E(T_1x,K)\bigr)}\dlat x
=\int_{\R^n}e^{-u\bigl(d_E(x,K)\bigr)}\dlat x=\w_u(K;E)
\]
and, analogously,
\[
\int_{\R^n}g(x)\,\dlat x=\w_u(L;E).
\]
On the other hand, writing $T=T_2T_1^{-1}$ and using
Lemma~\ref{l:asplund_fg} and Lemma~\ref{lem:Asplund}, we have
\[
\begin{split}
\int_{\R^n}(f\star g)(x)\,\dlat x &
    =\int_{\R^n}\bigl(\f{K}{E}{u}\star(\f{L}{E}{u}\circ T)\bigr)\bigl(T_1(x)\bigr)\,\dlat x\\
 & =\int_{\R^n}\bigl(\f{K}{E}{u}\star(\f{L}{E}{u}\circ T)\bigr)(x)\,\dlat x\\
 & =\int_{\R^n}\biggl(\f{K}{E}{u}\star
    \Bigl(\bigl(e^{-u(|\,\cdot\,|_E)}\circ T\bigr)\star(\chi_{_L}\circ T)\Bigr)\biggr)(x)\,\dlat x\\
 & =\int_{\R^n}\biggl(\bigl(e^{-u(|\,\cdot\,|_E)}\star\chi_{_K}\bigr)\star
    \Bigl(\bigl(e^{-u(|\,\cdot\,|_E)}\circ T\bigr)\star\chi_{_{T^{-1}L}}\Bigr)\!\biggr)(x)\,\dlat x.
\end{split}
\]
Now, since
\[
\left(e^{-u(|\,\cdot\,|_E)}\star
e^{-u(|T(\cdot)|_E)}\right)(x)\geq
e^{-u(|x|_E)}e^{-u(|T(0)|_E)}=e^{-u(|x|_E)},
\]
we have, from Lemma~\ref{lem:Asplund}, that
\[
\begin{split}
\bigl(e^{-u(|\,\cdot\,|_E)}\!\star\chi_{_K}\bigr)\!\star\!
    \Bigl(\!\bigl(e^{-u(|\,\cdot\,|_E)}\!\circ T\bigr)\star\chi_{_{T^{-1}L}}\!\Bigr)
 & =e^{-u(|\,\cdot\,|_E)}\star e^{-u(|T(\cdot)|_E)}\star\chi_{_{K+T^{-1}L}}\\
 & \geq e^{-u(|\,\cdot\,|_E)}\star\chi_{_{K+T^{-1}L}}\!=\f{K+T^{-1}L}{E}{u},
\end{split}
\]
and so we get
\[
\int_{\R^n}(f\star g)(x)\,\dlat x
\geq\int_{\R^n}\f{K+T^{-1}L}{E}{u}(x)\,\dlat
x=\w_u\bigl(K+T^{-1}L;E\bigr).
\]
Altogether concludes the proof.
\end{proof}

\section{Rogers-Shephard type inequalities for the Wills functional}

In this last section we obtain Rogers-Shephard type inequalities for the
Wills functional. First we study section/projection Rogers-Shephard type
relations.

\subsection{The Wills functional for projections and sections}

We start proving Theorem \ref{t:R-S_Wills}, for which we use the following
result that we state in the general setting of the generalized Wills
functional.

\begin{proposition}\label{t:R-S_Wills_gen}
Let $K,E\in\K^n$ with $0\in\inter E$, let $u\in\co(\R_{\geq 0})$ and
$H\in\G(n,k)$. Then
\[
\begin{split}
\wk{k}{u}(P_HK;P_HE)\,\wk{n-k}{u}(K\cap H^{\bot};E\cap H^{\bot})
\leq\binom{n}{k}e^{-u(0)}\w_{u}(K;E).
\end{split}
\]
\end{proposition}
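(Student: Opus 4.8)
The plan is to apply the Rogers--Shephard type inequality \eqref{e:R-S_f_log-concave1} for log-concave functions directly to the single function $f=\f{K}{E}{u}$, and then to identify each of the three integrals occurring there with the quantities in the statement. Note first that $f$ is log-concave, since $u$ is convex and increasing and $d_E(\,\cdot\,,K)$ is convex; moreover $\|f\|_{\infty}=e^{-u(0)}$, because $0\in\inter E$ forces $d_E(\,\cdot\,,K)\geq0$ with value $0$ exactly on the (nonempty) body $K$, and $u$ is increasing. Since also $\int_{\R^n}f(z)\,\dlat z=\w_u(K;E)$ by definition, \eqref{e:R-S_f_log-concave1} gives
\[
\int_H(P_Hf)(x)\,\dlat x\int_{H^{\bot}}f(y)\,\dlat y\leq\binom{n}{k}e^{-u(0)}\w_u(K;E).
\]

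It then remains to relate the two left-hand factors to the projection and the section in the statement. For the projection factor, the first assertion of Lemma~\ref{l:P_Hf_K=f_P_HK} gives $(P_Hf)(x)=\f{P_HK}{P_HE}{u}(x)$ for every $x\in H$, so that $\int_H(P_Hf)(x)\,\dlat x=\wk{k}{u}(P_HK;P_HE)$, an exact identity with no loss. For the section factor I would invoke the second assertion of the same lemma, now applied to the subspace $H^{\bot}$ in place of $H$: it yields $\f{K}{E}{u}(y)\geq\f{K\cap H^{\bot}}{E\cap H^{\bot}}{u}(y)$ for all $y$, whence
\[
\int_{H^{\bot}}f(y)\,\dlat y\geq\int_{H^{\bot}}\f{K\cap H^{\bot}}{E\cap H^{\bot}}{u}(y)\,\dlat y=\wk{n-k}{u}(K\cap H^{\bot};E\cap H^{\bot}).
\]

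Combining the two displays --- all integrals being non-negative, so that replacing $\int_{H^{\bot}}f$ by the smaller section integral only strengthens the estimate --- yields exactly the claimed inequality, with the constant $\binom{n}{k}e^{-u(0)}$ unchanged. Rather than a genuine obstacle, the one point needing care is the \emph{direction} in which Lemma~\ref{l:P_Hf_K=f_P_HK} is used in its two appearances: the projection enters as an equality while the section enters only as a lower bound, and it is precisely because the left-hand side of \eqref{e:R-S_f_log-concave1} carries $\int_{H^{\bot}}f$ --- and not the section Wills functional --- that this lower bound is the version the argument requires.
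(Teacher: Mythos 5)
Your proof is correct and follows essentially the same route as the paper: apply \eqref{e:R-S_f_log-concave1} to $\f{K}{E}{u}$ and use the two assertions of Lemma~\ref{l:P_Hf_K=f_P_HK} --- the projection identity as an equality and the section comparison as a lower bound for $\int_{H^{\bot}}f$. Your remark about the direction in which the section inequality is used is exactly the point the paper's own chain of inequalities relies on.
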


\begin{proof}
Using Lemma~\ref{l:P_Hf_K=f_P_HK} and applying
\eqref{e:R-S_f_log-concave1} to $\f{K}{E}{u}$ we get
\[
\begin{split}
\wk{k}{u}(P_HK;P_HE & )\,\wk{n-k}{u}(K\cap H^{\bot};E\cap H^{\bot})\\
 & =\int_{H}\f{P_HK}{P_HE}{u}(x)\,\dlat x\int_{H^{\bot}}\f{K\cap H^{\bot}}{E\cap H^{\bot}}{u}(y)\,\dlat y\\
 & \leq\int_{H}\bigl(P_H\f{K}{E}{u}\bigr)(x)\,\dlat x\int_{H^{\bot}}\f{K}{E}{u}(y)\,\dlat y\\
 & \leq\binom{n}{k}\|\f{K}{E}{u}\|_{\infty}\!\int_{\R^n}\!\!\f{K}{E}{u}(z)\,\dlat z=\!\binom{n}{k}e^{-u(0)}\w_{u}(K;E).\qedhere
\end{split}
\]
\end{proof}

We are now in a position to prove Theorem \ref{t:R-S_Wills}.

\begin{proof}[Proof of Theorem \ref{t:R-S_Wills}]
On one hand, taking into account that the classical Wills functional does
not depend on the dimension of the embedding space, a first upper bound is
obtained from Proposition~\ref{t:R-S_Wills_gen} applied to
$\f{K}{B_2^n}{\u}$:
\begin{equation}\label{e:bound1_R-S_Wills}
\w(P_HK)\w(K\cap H^{\bot})\leq\binom{n}{k}\w(K).
\end{equation}
On the other hand, since
$\bigl\|\f{K}{B_2^n}{\u}\bigr\|_{\infty}=\f{K}{B_2^n}{\u}(0)$, using
\eqref{e:R-S_f_log-concave2} we get
\[
(1-\lambda)^k\lambda^{n-k}\int_H\left(P_H\f{K}{B_2^n}{\u}\right)(x)^{1-\lambda}
\dlat x\int_{H^{\bot}}\f{K}{B_2^n}{\u}(y)^{\lambda}\dlat y
\leq\int_{\R^n}\f{K}{B_2^n}{\u}(z)\,\dlat z.
\]
Now, since (cf. Lemma \ref{l:P_Hf_K=f_P_HK})
\[
\begin{split}
\left(P_H\f{K}{B_2^n}{\u}\right)(x)^{1-\lambda} &
    =\f{P_HK}{B_2^k}{\u}(x)^{1-\lambda}
    =e^{-(1-\lambda)\pi d(x,P_HK)^2}\\
 & =e^{-\pi d\bigl(\sqrt{1-\lambda}\,x,\sqrt{1-\lambda}\,P_HK\bigr)^2}
    =\f{\sqrt{1-\lambda}\,P_HK}{B_2^k}{{\u}}\left(\sqrt{1-\lambda}\,x\right)
\end{split}
\]
for every $x\in H$ and
\[
\begin{split}
\f{K}{B_2^n}{\u}(y)^{\lambda} & \geq\f{K\cap
H^{\bot}}{B_2^{n-k}}{{\u}}(y)^{\lambda}
    =e^{-\lambda\pi d(y,K\cap H^\perp)^2}\\
 & =e^{-\pi d\bigl(\sqrt{\lambda}\,y,\,\sqrt{\lambda}\,K\cap H^\perp\bigr)^2}
    =\f{\sqrt{\lambda}\,K\cap H^{\bot}}{B_2^{n-k}}{{\u}}\left(\sqrt{\lambda}\,y\right)
\end{split}
\]
for all $y\in H^\perp$, doing the suitable change of variable in each
integral we get
\[
\begin{split}
\frac{(1-\lambda)^k\lambda^{n-k}}{(1-\lambda)^{k/2}\lambda^{(n-k)/2}}
 & \int_H\f{\sqrt{1-\lambda}\,P_HK}{B_2^k}{\u}(x)\,\dlat x
    \int_{H^{\bot}}\f{\sqrt{\lambda}\,K\cap H^{\bot}}{B_2^{n-k}}{\u}(y)\,\dlat y\\
 & \leq\int_{\R^n}\f{K}{B_2^n}{\u}(z)\,\dlat z,
\end{split}
\]
this is,
\begin{equation}\label{e:R-S_f_K_lambda}
(1-\lambda)^{k/2}\lambda^{(n-k)/2}\,\w\Bigl(\sqrt{1-\lambda}\,P_HK\Bigr)
\w\Bigl(\sqrt{\lambda}\,K\cap H^{\bot}\Bigr)\leq \,\w(K).
\end{equation}
Then, taking $\lambda=1/2$,
\[
\frac{1}{2^{n/2}}\w\left(\frac{1}{\sqrt{2}}P_HK\right)
\w\left(\frac{1}{\sqrt{2}}K\cap H^{\bot}\right)\leq\w(K),
\]
or equivalently,
\[
\w\left(P_HK\right)\w\bigl(K\cap H^{\bot}\bigr)\leq
2^{n/2}\w\bigl(\sqrt{2}K\bigr).
\]
Together with \eqref{e:bound1_R-S_Wills} we get the result.
\end{proof}

\begin{remark}
Since the maximum of the function
$(1-\lambda)^{k/2}\lambda^{(n-k)/2}$ when $\lambda\in(0,1)$ is
attained for $\lambda=(n-k)/n$, the best inequality which can be
obtained from \eqref{e:R-S_f_K_lambda} would be
\[
\w\Bigl(\left(\tfrac{k}{n}\right)^{1/2}P_HK\Bigr)\,
  \w\Bigl(\left(\tfrac{n-k}{n}\right)^{1/2} K\cap H^{\bot}\Bigr)
  \leq\frac{n^{n/2}}{k^{k/2}(n-k)^{(n-k)/2}}\w(K).
\]
\end{remark}

\begin{remark}
The minimum in Theorem~\ref{t:R-S_Wills} may be attained in both values,
even for the same sets, depending on $k$. For instance, if we consider the
unit cube $K=[0,1]^n$ and $E=B^n_2$, since $\W_i(K)=\kappa_i$ for
$i=0,\dots,n$, then
\[
\binom{n}{k}\w(K)=\binom{n}{k}2^n\quad \text{ and }\quad
2^{n/2}\,\w\bigl(\sqrt{2}K\bigr)=\sum_{i=0}^n\binom{n}{i}2^{n-i/2}.
\]
In dimension $n=10$, if $k=5$ then
$2^{n/2}\,\w\bigl(\sqrt{2}K\bigr)<\binom{n}{k}\w(K)$, whereas we
get the opposite inequality when $k\neq 5$.
\end{remark}

We conclude this subsection by showing some relations of the Wills
functional of a convex body in terms of the Wills functional of certain
projections of it onto hyperplanes. First, we recall some auxiliary
results: if $f:\R^n\longrightarrow\R_{\geq0}$ is a log-concave and
integrable function, the polar projection body of $f$, $\Pi^*f$, was
introduced in \cite{ABG}, via the norm induced, by
\begin{equation}\label{e:def_Pi^*f}
|v|_{_{\Pi^*f}}=2\int_{H_{v,0}}(P_{H_{v,0}}f)(x)\,\dlat x
\quad\text{ for all }\; v\in\s^{n-1}.
\end{equation}
Then, using polar coordinates, we have
\begin{equation}\label{e:Pi^*f}
\vol\bigl(\Pi^*f\bigr)=\kappa_n\int_{\s^{n-1}}\frac{1}{|v|_{_{\Pi^*f}}^n}\,\dlat\sigma(v),
\end{equation}
where $\sigma$ denotes the Lebesgue probability measure on
$\s^{n-1}$ (cf.~\cite[(1.53)]{Sch}). Regarding a lower bound for
$\vol\bigl(\Pi^*f\bigr)$, it was shown  in \cite[Theorem~1.1]{ABG}
that
\begin{equation}\label{e:funct_Zhang}
\int_{\R^n}\int_{\R^n}\min\bigl\{f(x),f(y)\bigr\}\,\dlat x\,\dlat
y\leq2^n n!\,\|f\|_{\infty}\|f\|_1^{n+1}\,\vol\bigl(\Pi^*f\bigr).
\end{equation}
Moreover, by the so-called affine Sobolev inequality (see
\cite[Theorem~1.1]{Z} and \cite[page~2]{ABG}), one has
\begin{equation}\label{e:AffineSobolev}
2^n\|f\|^n_{_{\frac{n}{n-1}}}\vol\bigl(\Pi^*f\bigr)\kappa_{n-1}^n\leq
\kappa_n^n.
\end{equation}
Using the above relations we can derive the maximal and minimal
values of the Wills functional of the projections onto hyperplanes
of a convex body, in terms of the Wills functional of the original
set.

\begin{theorem}\label{t:Wills_max_min_sigma}
Let $K\in\K^n$. Then
\begin{equation}\label{e:Wills_max_min}
\begin{split}
\max_{v\in\s^{n-1}}\w\left(P_{H_{v,0}} K\right) & \geq 2C_n\w(K)^{(n-1)/n} \quad\text{ and}\\
\min_{v\in\s^{n-1}}\w\left(P_{H_{v,0}} K\right) & \leq
D_n\w(K)^{(n-1)/n},
\end{split}
\end{equation}
where $C_n$ and $D_n$ are given by
\[
C_n=\frac{1}{2}\left(\sqrt{\frac{n-1}{n}}\,\right)^{n-1}\frac{\kappa_{n-1}}{\kappa_n^{(n-1)/n}}
\quad\text{ and }\quad D_n=(n!)^{1/n}\kappa_n^{1/n}.
\]
Moreover,
\begin{equation}\label{e:Wills_sigma}
\sigma\left(v\in\s^{n-1}: \w\left(P_{H_{v,0}} K\right)\geq
C_n\w(K)^{(n-1)/n}\right)\geq 1-\frac{1}{2^n}.
\end{equation}
In particular, there exist absolute constants $C$ and $D$ such
that
\begin{equation*}
\begin{split}
\max_{v\in\s^{n-1}} \w\left(P_{H_{v,0}} K\right) & \geq C\,\w(K)^{(n-1)/n},\\
\min_{v\in\s^{n-1}} \w\left(P_{H_{v,0}} K\right) & \leq D\sqrt{n}\,\w(K)^{(n-1)/n}\quad\text{ and}\\
\sigma\Bigl(v\in\s^{n-1}: \w\left(P_{H_{v,0}} K\right) &
    \geq C\,\w(K)^{(n-1)/n}\Bigr)\geq 1-\frac{1}{2^n}
\end{split}
\end{equation*}
for $n$ large enough.
\end{theorem}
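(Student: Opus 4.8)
The plan is to apply the polar-projection-body machinery to the log-concave function $f=\f{K}{B_2^n}{\u}=e^{-\pi d(\,\cdot\,,K)^2}$, for which $\|f\|_\infty=1$ (attained at any point of $K$) and $\|f\|_1=\w(K)$. First I would set up the dictionary linking $\Pi^*f$ to the Wills functionals of projections. By Lemma~\ref{l:P_Hf_K=f_P_HK} we have $P_{H_{v,0}}f=\f{P_{H_{v,0}}K}{B_2^{n-1}}{\u}$, so $\int_{H_{v,0}}(P_{H_{v,0}}f)(x)\,\dlat x=\w(P_{H_{v,0}}K)$ (the classical Wills functional being independent of the embedding dimension); hence \eqref{e:def_Pi^*f} gives $|v|_{_{\Pi^*f}}=2\,\w(P_{H_{v,0}}K)$, and \eqref{e:Pi^*f} becomes
\[
\vol\bigl(\Pi^*f\bigr)=\frac{\kappa_n}{2^n}\int_{\s^{n-1}}\frac{\dlat\sigma(v)}{\w(P_{H_{v,0}}K)^n}.
\]
Every assertion then reduces to bounding this spherical average of $\w(P_{H_{v,0}}K)^{-n}$ from above and below.

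For the maximum bound in \eqref{e:Wills_max_min} I would bound $\vol(\Pi^*f)$ \emph{from above} by the affine Sobolev inequality \eqref{e:AffineSobolev}, and combine it with the elementary estimate $\int_{\s^{n-1}}\w(P_{H_{v,0}}K)^{-n}\,\dlat\sigma\geq(\max_v\w(P_{H_{v,0}}K))^{-n}$ (valid since $\sigma$ is a probability measure). This yields $\max_v\w(P_{H_{v,0}}K)\geq\|f\|_{_{n/(n-1)}}\kappa_{n-1}/\kappa_n^{(n-1)/n}$. The norm is computed by Lemma~\ref{l:p-norm} with $p=n/(n-1)$, giving $\|f\|_{_{n/(n-1)}}=\bigl(\tfrac{n-1}{n}\bigr)^{(n-1)/2}\w\bigl(\sqrt{n/(n-1)}\,K\bigr)^{(n-1)/n}$. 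The one delicate point is to replace $\w(\sqrt{n/(n-1)}\,K)$ by $\w(K)$: since both sides of the asserted inequality are translation invariant I may assume $0\in K$, whence $K\subset\sqrt{n/(n-1)}\,K$ and the monotonicity of the intrinsic volumes gives $\w(K)\leq\w(\sqrt{n/(n-1)}\,K)$. Tracking constants produces exactly $\max_v\w(P_{H_{v,0}}K)\geq 2C_n\w(K)^{(n-1)/n}$.

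For the minimum bound I would instead bound $\vol(\Pi^*f)$ \emph{from below} via the functional Zhang-type inequality \eqref{e:funct_Zhang}. The key observation is that, because $0\leq f\leq\|f\|_\infty=1$, one has $\min\{f(x),f(y)\}\geq f(x)f(y)$ pointwise, so $\int_{\R^n}\int_{\R^n}\min\{f(x),f(y)\}\,\dlat x\,\dlat y\geq\|f\|_1^2=\w(K)^2$; feeding this together with $\|f\|_\infty=1$ and $\|f\|_1=\w(K)$ into \eqref{e:funct_Zhang} yields $\vol(\Pi^*f)\geq\bigl(2^n n!\,\w(K)^{n-1}\bigr)^{-1}$. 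Combining with $\int_{\s^{n-1}}\w(P_{H_{v,0}}K)^{-n}\,\dlat\sigma\leq(\min_v\w(P_{H_{v,0}}K))^{-n}$ and the displayed identity for $\vol(\Pi^*f)$ gives $\min_v\w(P_{H_{v,0}}K)^n\leq\kappa_n\,n!\,\w(K)^{n-1}$, that is $\min_v\w(P_{H_{v,0}}K)\leq(n!\,\kappa_n)^{1/n}\w(K)^{(n-1)/n}=D_n\w(K)^{(n-1)/n}$.

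For the measure estimate \eqref{e:Wills_sigma} I would run a Chebyshev argument on the same average. The affine Sobolev computation actually delivers $\int_{\s^{n-1}}\w(P_{H_{v,0}}K)^{-n}\,\dlat\sigma\leq\bigl(2C_n\w(K)^{(n-1)/n}\bigr)^{-n}$; on the set $A=\{v:\w(P_{H_{v,0}}K)<C_n\w(K)^{(n-1)/n}\}$ the integrand exceeds $\bigl(C_n\w(K)^{(n-1)/n}\bigr)^{-n}$, which forces $\sigma(A)\leq 2^{-n}$, i.e. \eqref{e:Wills_sigma}. Finally, the asymptotic statements follow by estimating $C_n$ and $D_n$ through Stirling's formula: from $\kappa_n^{1/n}\sim\sqrt{2\pi e/n}$ and $\kappa_{n-1}/\kappa_n\sim\sqrt{n/(2\pi)}$ one checks $C_n\to 1/2$, while $(n!)^{1/n}\sim n/e$ gives $D_n\sim\sqrt{2\pi/e}\,\sqrt n$. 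The only real obstacle I anticipate is the bookkeeping around the dilate $\sqrt{n/(n-1)}\,K$ in the maximum bound; everything else is a direct assembly of the quoted inequalities.
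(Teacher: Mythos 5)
Your proposal is correct and follows essentially the same route as the paper: the identity $|v|_{_{\Pi^*f}}=2\,\w(P_{H_{v,0}}K)$ for $f=\f{K}{B_2^n}{\u}$, the Zhang-type inequality \eqref{e:funct_Zhang} with $\min\{f(x),f(y)\}\geq f(x)f(y)$ for the lower bound on $\vol(\Pi^*f)$, the affine Sobolev inequality \eqref{e:AffineSobolev} with Lemma~\ref{l:p-norm} for the upper bound, a Markov/Chebyshev estimate for \eqref{e:Wills_sigma}, and Stirling for the asymptotics. The handling of the dilate $\sqrt{n/(n-1)}\,K$ via translation invariance and monotonicity is exactly the paper's final step as well.
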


\begin{proof}
For the sake of brevity we write $f=\f{K}{B_2^n}{\u}$. On one hand, from
\eqref{e:def_Pi^*f} and Lemma \ref{l:P_Hf_K=f_P_HK} we obtain
\begin{equation}\label{e:vPi^*f}
|v|_{_{\Pi^*f}}=2\w\left(P_{H_{v,0}} K\right).
\end{equation}
On the other hand, since $\|f\|_\infty=1$, then
\[
f(x)f(y)\leq\min\bigl\{f(x),f(y)\bigr\}\quad\text{ for all }\;x,y\in\R^n.
\]
This fact, jointly with \eqref{e:funct_Zhang}, implies that
\[
1\leq2^n n!\,\|f\|_1^{n-1}\,\vol\bigl(\Pi^*f\bigr)
\]
and hence, together with \eqref{e:AffineSobolev}, we get
\begin{equation*}
\frac{1}{2(n!)^{1/n}\|f\|_1^{(n-1)/n}}\leq\vol\bigl(\Pi^*f\bigr)^{1/n}\leq
\frac{\kappa_n}{2\kappa_{n-1}\|f\|_{_{\frac{n}{n-1}}}}.
\end{equation*}
Taking into account \eqref{e:Pi^*f}, \eqref{e:f_n/n-1} and
\eqref{e:vPi^*f}, the above inequality yields
\begin{equation*}
\begin{split}
\frac{1}{(n!)^{1/n}\kappa_n^{1/n}\w(K)^{(n-1)/n}} &
    \leq\left(\int_{\s^{n-1}}\frac{1}{\w\left(P_{H_{v,0}} K\right)^n}\,\dlat\sigma(v)\right)^{1/n}\\
 & \leq \frac{\kappa_n^{(n-1)/n}}{\kappa_{n-1}\left(\sqrt{\frac{n-1}{n}}\right)^{n-1}\w\left(\sqrt{\frac{n}{n-1}}\,K\right)^{(n-1)/n}},
\end{split}
\end{equation*}
and then we infer that
\[
\max_{v\in\s^{n-1}}\w\left(P_{H_{v,0}} K\right)\geq
2C_n\w\left(\sqrt{\frac{n}{n-1}}\,K\right)^{(n-1)/n}
\]
and
\[
\min_{v\in\s^{n-1}} \w\left(P_{H_{v,0}} K\right)\leq D_n\w(K)^{(n-1)/n},
\]
which imply \eqref{e:Wills_max_min}, from the monotonicity (and the
translation invariance) of the classical Wills functional.

To prove \eqref{e:Wills_sigma}, we observe that \eqref{e:Pi^*f} and
Markov's inequality (see e.g. \cite[Proposition~2.3.10]{Cohn}) imply that,
for all $t>0$,
\begin{equation*}\label{e:markov}
\frac{1}{\kappa_n}\vol\bigl(\Pi^*f\bigr)=\int_{\s^{n-1}}\frac{1}{|v|_{_{\Pi^*f}}^n}\,\dlat\sigma(v)
\geq \frac{1}{t^n}\,\sigma\left(v\in\s^{n-1}:
\frac{1}{|v|_{_{\Pi^*f}}^n}\geq\frac{1}{t^n}\right).
\end{equation*}
Then, taking
\[
s=\frac{t\,\kappa_n^{(n-1)/n}}{2\,\|f\|_{_{\frac{n}{n-1}}}\kappa_{n-1}}
\]
and using \eqref{e:AffineSobolev}, we get
\begin{equation*}
\sigma\left(v\in\s^{n-1}:|v|_{_{\Pi^*f}}\leq\frac{2s\|f\|_{_{\frac{n}{n-1}}}\kappa_{n-1}}{\kappa_n^{(n-1)/n}}\right)\leq
s^n.
\end{equation*}
This inequality for $s=1/2$ jointly with \eqref{e:f_n/n-1} and
\eqref{e:vPi^*f} yield
\begin{equation*}
\sigma\left(v\in\s^{n-1}: \w\left(P_{H_{v,0}} K\right)\leq
C_n\w\left(\sqrt{\frac{n}{n-1}}\,K\right)^{(n-1)/n}\right) \leq
\frac{1}{2^n},
\end{equation*}
which implies \eqref{e:Wills_sigma}, from the monotonicity (and the
translation invariance) of the classical Wills functional.

The last assertion follows from the fact that both $C_n$ and
$D_n/\sqrt{n}$ are convergent to $1/2$ and $\sqrt{2\pi/e}$, respectively,
as may be seen by using Stirling's formula and the value of $\kappa_n$.
\end{proof}

We note that Theorem~\ref{t:Wills_max_min_sigma} holds true in the general
setting of the generalized Wills functional $\w_u(\,\cdot\,,E)$, but in
that case, the bounds are given in terms of the functional
$\w_{pu}(\,\cdot\,,E)$ for the suitable $p\geq 1$ (cf.~\eqref{e:|f_u|_p}).
We have settled the result for the classical functional because in this
case the bounds are given in terms of $\w(\,\cdot)\,$ itself.

\subsection{Rogers-Shephard inequalities for the classical Wills functional}

The classical Rogers-Shephard inequality for the difference body states
that
\[
\vol(K-K)\leq \binom{2n}{n}\vol(K)
\]
(see e.g. \cite[Theorem~10.1.4]{Sch}). A strengthening of this inequality
was conjectured independently by Godbersen and Makai~Jr., namely, that the
mixed volume $\V\bigl(K[i],-K[n-i]\bigr)\leq\binom{n}{i}\vol(K)$ (see
\cite[Note~5 for Section~10.1]{Sch} and the references therein). Engaging
progresses have been made recently on this conjecture in \cite{AEFO}. Also
the corresponding upper bounds for the intrinsic volumes $\V_i(K-K)$,
$i=1,\dots,n-1$, are still unknown.

This subsection is devoted to studying Rogers-Shephard type inequalities
for the classical Wills functional. We will provide two different upper
bounds for $\w(K-K)$, which are obtained by using distinct techniques (we
will exploit either the difference function, or a Rogers-Shephard type
inequality for a log-concave function). These bounds will be not
comparable in the sense that, depending on the dimension, one is better
than the other.

First we prove Theorem \ref{t:R-S_Wills_K,L}. Indeed, profiting from
\eqref{e:diff_function}, we get the following more general result for two
convex bodies $K,L\in\K^n$.
\begin{theorem}\label{t:RS_Wills_K,L_1}
Let $K,L\in\K^n$ and let $\lambda\in(0,1)$. Then
\[
\w\left(\frac{(\lambda K)
\cap\bigl((1-\lambda)L\bigr)}{\sqrt{\lambda(1-\lambda)}}\right)
\w\bigl((1-\lambda)K-\lambda L\bigr)
\leq\frac{1}{\bigl(\lambda(1-\lambda)\bigr)^{n/2}}\w(K)\w(L).
\]
\end{theorem}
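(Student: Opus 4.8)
The plan is to apply the functional Rogers--Shephard inequality \eqref{e:diff_function} to the two rescaled Wills integrands
\[
f(x)=e^{-\pi d(x/\lambda,\,K)^2}\qquad\text{and}\qquad g(x)=e^{-\pi d\bigl(x/(1-\lambda),\,L\bigr)^2},
\]
and then to recognize the three Wills functionals of the statement in the three integrals that appear. Both $f$ and $g$ are log-concave, since $d(\,\cdot\,,K)$ is convex and $t\mapsto\pi t^2$ is convex and increasing on $\R_{\geq0}$, so \eqref{e:diff_function} applies with finite positive integrals. The two ``outer'' integrals are immediate: the substitutions $y=x/\lambda$ and $y=x/(1-\lambda)$ together with the first representation in \eqref{e:representacionesWills} give $\int_{\R^n}f(x)\,\dlat x=\lambda^n\w(K)$ and $\int_{\R^n}g(x)\,\dlat x=(1-\lambda)^n\w(L)$, so the right-hand side of \eqref{e:diff_function} equals $\bigl(\lambda(1-\lambda)\bigr)^n\w(K)\w(L)$.

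Next I would bound the first factor $\int_{\R^n}f^\lambda g^{1-\lambda}$ from below. (If $(\lambda K)\cap\bigl((1-\lambda)L\bigr)=\emptyset$ the left-hand side of the theorem is read as $0$ and there is nothing to prove, so assume it is non-empty.) For any $a\in(\lambda K)\cap\bigl((1-\lambda)L\bigr)$ one has $d(x/\lambda,K)\leq|x-a|/\lambda$ and $d\bigl(x/(1-\lambda),L\bigr)\leq|x-a|/(1-\lambda)$, whence
\[
\lambda\,d(x/\lambda,K)^2+(1-\lambda)\,d\bigl(x/(1-\lambda),L\bigr)^2\leq\Bigl(\tfrac1\lambda+\tfrac1{1-\lambda}\Bigr)|x-a|^2=\frac{|x-a|^2}{\lambda(1-\lambda)}.
\]
Taking the infimum over such $a$ and exponentiating gives the pointwise bound $f^\lambda g^{1-\lambda}(x)\geq e^{-\frac{\pi}{\lambda(1-\lambda)}d\bigl(x,(\lambda K)\cap((1-\lambda)L)\bigr)^2}$; integrating and rescaling by $\sqrt{\lambda(1-\lambda)}$ then yields
\[
\int_{\R^n}f^\lambda g^{1-\lambda}(x)\,\dlat x\geq\bigl(\lambda(1-\lambda)\bigr)^{n/2}\,\w\!\left(\frac{(\lambda K)\cap\bigl((1-\lambda)L\bigr)}{\sqrt{\lambda(1-\lambda)}}\right).
\]

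The technical heart, which I expect to be the main obstacle, is the \emph{exact} evaluation of the $\lambda$-difference function. Writing $p=x/(\lambda(1-\lambda))$ and $q=-y/(\lambda(1-\lambda))$, the constraint $z=(1-\lambda)x+\lambda y$ becomes $\tilde z:=z/(\lambda(1-\lambda))=(1-\lambda)p-\lambda q$, and the definition of $\Delta_{\lambda}^{f,g}$ turns into
\[
\Delta_{\lambda}^{f,g}(z)=\sup_{\tilde z=(1-\lambda)p-\lambda q}e^{-\pi\bigl[(1-\lambda)d(p,K)^2+\lambda\,d(q,L)^2\bigr]}.
\]
For fixed $k\in K$ and $l\in L$ a Lagrange-multiplier computation shows that the minimum of $(1-\lambda)|p-k|^2+\lambda|q-l|^2$ subject to $(1-\lambda)p-\lambda q=\tilde z$ equals $\bigl|\tilde z-\bigl((1-\lambda)k-\lambda l\bigr)\bigr|^2$; minimizing over $k,l$ then identifies the bracketed exponent with $d\bigl(\tilde z,(1-\lambda)K-\lambda L\bigr)^2$. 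Hence $\Delta_{\lambda}^{f,g}(z)=e^{-\pi d\bigl(z/(\lambda(1-\lambda)),\,(1-\lambda)K-\lambda L\bigr)^2}$, and the change of variables $\tilde z=z/(\lambda(1-\lambda))$ gives the exact identity
\[
\int_{\R^n}\Delta_{\lambda}^{f,g}(z)\,\dlat z=\bigl(\lambda(1-\lambda)\bigr)^n\,\w\bigl((1-\lambda)K-\lambda L\bigr).
\]
The delicate points here are checking that the difference function of these two integrands is again a (rescaled) Wills integrand, now of the difference body, and keeping careful track of the scaling factors throughout.

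Finally I would assemble the three evaluations. Inserting the lower bound for the first factor and the exact value of the second into \eqref{e:diff_function} gives
\[
\bigl(\lambda(1-\lambda)\bigr)^{n/2}\w\!\left(\frac{(\lambda K)\cap((1-\lambda)L)}{\sqrt{\lambda(1-\lambda)}}\right)\bigl(\lambda(1-\lambda)\bigr)^n\w\bigl((1-\lambda)K-\lambda L\bigr)\leq\bigl(\lambda(1-\lambda)\bigr)^n\w(K)\w(L).
\]
Cancelling the common factor $\bigl(\lambda(1-\lambda)\bigr)^n$ and then dividing by $\bigl(\lambda(1-\lambda)\bigr)^{n/2}$ produces exactly the asserted inequality.
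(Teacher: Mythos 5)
Your proof is correct and follows essentially the same route as the paper: both apply the functional Rogers--Shephard inequality \eqref{e:diff_function} to rescaled Wills integrands, evaluate $\Delta_{\lambda}^{f,g}$ exactly as the Wills integrand of $(1-\lambda)K-\lambda L$, and bound the mixed integral $\int f^{\lambda}g^{1-\lambda}$ from below by the Wills functional of the rescaled intersection. The only differences are cosmetic: you normalize $f$ and $g$ differently (scaling by $1/\lambda$ and $1/(1-\lambda)$ rather than by $1-\lambda$ and $\lambda$), and you compute the difference function by a direct constrained minimization where the paper uses the Asplund-product identities $\chi_{_A}\star\chi_{_B}=\chi_{_{A+B}}$ and $e^{-\frac{\pi}{1-\lambda}|\cdot|^2}\star e^{-\frac{\pi}{\lambda}|\cdot|^2}=e^{-\pi|\cdot|^2}$; all scaling factors check out and the two versions yield the identical final constant.
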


When $\lambda=1/2$, we obtain Theorem~\ref{t:R-S_Wills_K,L}.

\begin{proof}[Proof of Theorem \ref{t:RS_Wills_K,L_1}]
Let $f=\f{K}{B_2^n}{\u}\bigl((1-\lambda)\,\cdot\bigr)$ and
$g=\f{L}{B_2^n}{\u}(\lambda\,\cdot)$. Using Lemma \ref{lem:Asplund} as
well as the basic properties of the Asplund product we see~that the
$\lambda$-difference function associated to $f$ and $g$ can be written as
\begin{equation}\label{e:delta_lambda_fg}
\begin{split}
\Delta_{\lambda}^{f,g}(z) & =\sup_{z=x+y}
    f\left(\frac{x}{(1-\lambda)^2}\right)^{1-\lambda}g\left(\frac{-y}{\lambda^2}\right)^{\lambda}\\
 & =\sup_{z=x+y}e^{-\pi(1-\lambda)d\left(\frac{x}{1-\lambda},K\right)^2}
    e^{-\pi\lambda d\left(\frac{y}{\lambda},-L\right)^2}\\
 & =\sup_{z=x+y}e^{-\frac{\pi}{1-\lambda}\,d\bigl(x,(1-\lambda)K\bigr)^2}
    e^{-\frac{\pi}{\lambda}\,d\bigl(y,-\lambda L\bigr)^2}\\
 & =\biggl(\left(e^{-\frac{\pi}{1-\lambda}|\,\cdot\,|^2}\star\chi_{_{(1-\lambda)K}}\right)\star
    \left(e^{-\frac{\pi}{\lambda}|\,\cdot\,|^2}\star\chi_{_{-\lambda L}}\right)\biggr)(z)\\
 & =\left(e^{-\frac{\pi}{1-\lambda}|\,\cdot\,|^2}\star
    e^{-\frac{\pi}{\lambda}|\,\cdot\,|^2}\star\chi_{_{(1-\lambda)K-\lambda L}}\right)(z).
\end{split}
\end{equation}
Since, for any $v\in\R^n$,
\begin{equation}\label{e:delta_lambda_fg_2}
\begin{split}
\left(e^{-\frac{\pi}{1-\lambda}|\,\cdot\,|^2}\star
    e^{-\frac{\pi}{\lambda}|\,\cdot\,|^2}\right)(v) &
    =\sup_{w\in\R^n}e^{-\pi\left(\frac{|w|^2}{1-\lambda}+\frac{|v-w|^2}{\lambda}\right)}\\
 & =e^{-\frac{\pi}{\lambda(1-\lambda)}\inf_{w\in\R^n}\left(\lambda|w|^2+(1-\lambda)|v-w|^2\right)}\\
 & =e^{-\frac{\pi}{\lambda(1-\lambda)}
    \inf_{w\in\R^n}\left(|w|^2+(1-\lambda)|v|^2-2(1-\lambda)\langle v,w\rangle\right)}\\
 & =e^{-\frac{\pi}{\lambda(1-\lambda)}\inf_{r\geq 0}
    \inf_{|w|=r|v|}\left(r^2|v|^2+(1-\lambda)|v|^2-2(1-\lambda)\langle v,w\rangle\right)}\\
 & =e^{-\frac{\pi}{\lambda(1-\lambda)}\inf_{r\geq 0}
    \left(r^2+1-\lambda-2r(1-\lambda)\right)|v|^2}\\
 & =e^{-\frac{\pi}{\lambda(1-\lambda)}\left(1-\lambda-(1-\lambda)^2\right)|v|^2}
    =e^{-\pi|v|^2},
\end{split}
\end{equation}
then \eqref{e:delta_lambda_fg} yields
$\Delta_{\lambda}^{f,g}=e^{-\pi|\,\cdot\,|^2}\star\chi_{_{(1-\lambda)K-\lambda
L}}$. Thus, by Lemma \ref{lem:Asplund},
$\Delta_{\lambda}^{f,g}=\f{(1-\lambda)K-\lambda L}{B_2^n}{\u}$ and hence
\[
\int_{\R^n}\Delta_{\lambda}^{f,g}(x)\,\dlat x=\w\bigl((1-\lambda)K-\lambda
L\bigr).
\]
Moreover, we clearly have that
\[
\begin{split}
f(x)^{\lambda}g(x)^{1-\lambda}
 & =e^{-\pi\lambda d\bigl((1-\lambda)x,K\bigr)^2}e^{-\pi(1-\lambda)d(\lambda x,L)^2}\\
 & =e^{-\pi\lambda(1-\lambda)^2 d\left(x,\frac{K}{1-\lambda}\right)^2}
    e^{-\pi(1-\lambda)\lambda^2d\left(x,\frac{L}{\lambda}\right)^2}\\
 & \geq e^{-\pi\lambda(1-\lambda)^2d\left(x,\frac{K}{1-\lambda}\cap\frac{L}{\lambda}\right)^2}
    e^{-\pi(1-\lambda)\lambda^2d\left(x,\frac{K}{1-\lambda}\cap\frac{L}{\lambda}\right)^2}\\
 & =e^{-\pi d\left(\sqrt{\lambda(1-\lambda)} x,\frac{\left(\lambda K\right)
    \cap\left((1-\lambda)L\right)}{\sqrt{\lambda(1-\lambda)}}\right)^2},
\end{split}
\]
and then, applying the change of variable $y = \sqrt{\lambda(1-\lambda)}
x$, we get
\[
\int_{\R^n}f(x)^{\lambda}g(x)^{1-\lambda}\,\dlat x\geq
\frac{1}{\bigl(\lambda(1-\lambda)\bigr)^{n/2}} \w\left(\frac{(\lambda
K)\cap((1-\lambda)L)}{\sqrt{\lambda(1-\lambda)}}\right).
\]
Therefore, by \eqref{e:diff_function} and doing the changes of variable
$y=(1-\lambda)x$ and $y=\lambda x$, respectively, we obtain
\[
\begin{split}
\w\left(\frac{(\lambda K)
    \cap((1-\lambda)L)}{\sqrt{\lambda(1-\lambda)}}\right)\w\left((1-\lambda)K-\lambda L\right)\\
\leq\bigl(\lambda(1-\lambda)\bigr)^{n/2} \int_{\R^n}f(x)\,\dlat x
    \int_{\R^n}g(y)\,\dlat y
 & =\frac{1}{\bigl(\lambda(1-\lambda)\bigr)^{n/2}}\w(K)\w(L).
\end{split}
\]
This concludes the proof.
\end{proof}

It is known (see \cite[Theorem~2.2]{AGJV} as well as \cite{Co} for
related inequalities) that for a log-concave function
$f:\R^n\longrightarrow\R_{\geq0}$,
\begin{equation}\label{eq:extensionCol_f}
\int_{\R^n}\bigl(f\star\bar{f}\bigr)(x)\,\dlat x\leq\binom{2n}{n}
\|f\|_\infty \int_{\R^n}f(x)\,\dlat x,
\end{equation}
where $\bar{f}:\R^n\longrightarrow\R_{\geq0}$ is given by
$\bar{f}(x)=f(-x)$.

We conclude the paper by using this result to obtain the last announced
upper bound for the classical Wills functional of the difference body.
\begin{theorem}
Let $K\in\K^n$. Then
\begin{equation}\label{e:R-S_K-K3}
\w(K-K)\leq \frac{\binom{2n}{n}}{2^{n/2}}\,\w\left(\sqrt{2}\,K\right).
\end{equation}
\end{theorem}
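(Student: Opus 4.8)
The plan is to apply the Rogers--Shephard type inequality \eqref{eq:extensionCol_f} to the log-concave function $f=\f{K}{B_2^n}{\u}$. Since $f(x)=e^{-\pi d(x,K)^2}$ attains its maximum value $1$ on $K$, we have $\|f\|_{\infty}=1$ (so no normalizing translation is needed), while $\int_{\R^n}f(x)\,\dlat x=\w(K)$ and $\bar{f}(x)=f(-x)=e^{-\pi d(x,-K)^2}=\f{-K}{B_2^n}{\u}(x)$. Thus the only genuine task is to identify the Asplund product $f\star\bar{f}$ and compute its integral.

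First I would use Lemma~\ref{lem:Asplund} to write $f=e^{-\pi|\,\cdot\,|^2}\star\chi_{_K}$ and $\bar{f}=e^{-\pi|\,\cdot\,|^2}\star\chi_{_{-K}}$. Since the Asplund product is commutative and associative and $\chi_{_K}\star\chi_{_{-K}}=\chi_{_{K-K}}$, this gives
\[
f\star\bar{f}=\Bigl(e^{-\pi|\,\cdot\,|^2}\star e^{-\pi|\,\cdot\,|^2}\Bigr)\star\chi_{_{K-K}}.
\]
A direct computation of the self-Asplund product of the Gaussian, entirely analogous to \eqref{e:delta_lambda_fg_2}, yields $\bigl(e^{-\pi|\,\cdot\,|^2}\star e^{-\pi|\,\cdot\,|^2}\bigr)(v)=e^{-\frac{\pi}{2}|v|^2}$, because the infimum of $|w|^2+|v-w|^2$ over $w\in\R^n$ equals $|v|^2/2$. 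Hence, as in the proof of Lemma~\ref{lem:Asplund},
\[
(f\star\bar{f})(x)=\bigl(e^{-\frac{\pi}{2}|\,\cdot\,|^2}\star\chi_{_{K-K}}\bigr)(x)=e^{-\frac{\pi}{2}d(x,K-K)^2}.
\]

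Next I would integrate, performing the change of variable $x=\sqrt{2}\,y$; since $d(\sqrt{2}\,y,K-K)=\sqrt{2}\,d\bigl(y,(K-K)/\sqrt{2}\bigr)$, this gives
\[
\int_{\R^n}(f\star\bar{f})(x)\,\dlat x=2^{n/2}\int_{\R^n}e^{-\pi d\bigl(y,\frac{K-K}{\sqrt{2}}\bigr)^2}\,\dlat y=2^{n/2}\,\w\!\left(\frac{K-K}{\sqrt{2}}\right).
\]
Feeding $\|f\|_\infty=1$, $\int_{\R^n}f=\w(K)$ and this last identity into \eqref{eq:extensionCol_f} produces
\[
2^{n/2}\,\w\!\left(\frac{K-K}{\sqrt{2}}\right)\leq\binom{2n}{n}\w(K).
\]
Finally, applying this inequality with $\sqrt{2}\,K$ in place of $K$ (so that $(\sqrt{2}\,K-\sqrt{2}\,K)/\sqrt{2}=K-K$) yields exactly \eqref{e:R-S_K-K3}. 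The main obstacle is purely the bookkeeping of the $\sqrt{2}$-scalings: one must track how the Gaussian broadens to $e^{-\frac{\pi}{2}|\,\cdot\,|^2}$ under the Asplund product and compensate for this through the change of variables, so that the resulting dilation factors combine correctly with the final substitution $K\mapsto\sqrt{2}\,K$.
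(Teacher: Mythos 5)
Your proposal is correct and follows essentially the same route as the paper: identify $\f{K}{B_2^n}{\u}\star\bar{f}_{_{\!K,B_2^n}}^{\,\u}$ as $e^{-\frac{\pi}{2}d(\,\cdot\,,K-K)^2}$ via Lemma~\ref{lem:Asplund} and the self-Asplund product of the Gaussian, integrate with the $\sqrt{2}$-substitution, and apply \eqref{eq:extensionCol_f}. The only cosmetic difference is that you compute $e^{-\pi|\,\cdot\,|^2}\star e^{-\pi|\,\cdot\,|^2}=e^{-\frac{\pi}{2}|\,\cdot\,|^2}$ directly rather than rescaling the $\lambda=1/2$ case of \eqref{e:delta_lambda_fg_2}, and you make explicit the final substitution $K\mapsto\sqrt{2}\,K$ that the paper leaves implicit.
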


\begin{proof}
Using \eqref{e:delta_lambda_fg_2} for $\lambda=1/2$, namely
$e^{-2\pi|\,\cdot\,|^2}\star
e^{-2\pi|\,\cdot\,|^2}=e^{-\pi|\,\cdot\,|^2}$, we obtain that
$e^{-\pi|\,\cdot\,|^2}\star
e^{-\pi|\,\cdot\,|^2}=e^{-\frac{\pi}{2}|\,\cdot\,|^2}$ and thus,
by Lemma~\ref{lem:Asplund},
\[
\f{K}{B_2^n}{\u}\star\bar{f}_{_{\!K,B_2^n}}^{\,\u}
    =\left(e^{-\pi|\,\cdot\,|^2}\star\chi_{_{K}}\right)\star\left(e^{-\pi|\,\cdot\,|^2}\star\chi_{_{-K}}\right)
    =e^{-\frac{\pi}{2}|\,\cdot\,|^2}\star\chi_{_{K-K}}=\f{K-K}{B_2^n}{\u/2}.
\]
Since
\[
\begin{split}
\int_{\R^n}\f{K-K}{B_2^n}{\u/2}(x)\,\dlat x &
    =\int_{\R^n}e^{-\frac{\pi}{2}d(x,K-K)^2}\,\dlat x
    =\int_{\R^n}e^{-\pi d\left(\frac{x}{\sqrt{2}},\frac{K-K}{\sqrt{2}}\right)^2}\,\dlat x\\
 & =2^{n/2}\,\w\left(\frac{K-K}{\sqrt{2}}\right),
\end{split}
\]
then \eqref{eq:extensionCol_f} yields
\[
\w\left(\frac{K-K}{\sqrt{2}}\right)\leq
\frac{\binom{2n}{n}}{2^{n/2}}\,\w(K),
\]
which concludes the proof.
\end{proof}

\begin{remark}
We observe that the bounds \eqref{e:R-S_K-K1} and \eqref{e:R-S_K-K3} are
not comparable. For instance, if we consider the cube $K=[0,1/2]^n$, for
which $\W_i(K)=\kappa_i/2^{n-i}$, $i=0,\dots,n$, then it is easy to check
that
\[
2^n\,\w(2K)=4^n<\binom{2n}{n}\sum_{i=0}^n\binom{n}{i}\frac{1}{2^{(n+i)/2}}
=\frac{\binom{2n}{n}}{2^{n/2}}\,\w\left(\sqrt{2}\,K\right)
\]
for $n=9$, whereas we get the opposite inequality when $n=3$.
\end{remark}


\begin{thebibliography}{99}
\bibitem{AA} D. Alonso-Guti\'errez, S. Artstein-Avidan, B.
Gonz\'alez, C. H. Jim\'enez, R. Villa, Rogers-Shephard and local
Loomis-Whitney type inequalities, {\it To appear in Math. Ann.},
\url{https://doi.org/10.1007/s00208-019-01834-3}.

\bibitem{ABG} D. Alonso-Guti\'errez, J. Bernu\'es, B. Gonz\'alez,
Zhang's inequality for log-concave functions, {\it To appear in GAFA
Seminar Notes}.

\bibitem{ABG2} D. Alonso-Guti\'errez, J. Bernu\'es, B. Gonz\'alez,
An extension of Berwald's inequality and its relation to Zhang's
inequality, {\it Preprint, arXiv:1908.01154}.

\bibitem{AGJV} D. Alonso-Guti\'errez, B. Gonz\'alez, C. H.
Jim\'enez, R. Villa, Rogers-Shephard inequality for log-concave functions,
{\it J. Funct. Anal.} {\bf 271} (11) (2016), 3269--3299.

\bibitem{AEFO} S. Artstein-Avidan, K. Einhorn, D. I. Florentin, Y.
Ostrover, On Godbersen's conjecture, {\it Geom. Dedicata} {\bf 178}
(2015), 337--350.

\bibitem{AKM} S. Artstein-Avidan, B. Klartag, V. Milman, The
Santal\'o point of a function, and a functional form of the
Santal\'o inequality, {\it Mathematika} {\bf 51} (1-2) (2014),
33--48.

\bibitem{B} K. Ball, Volumes of sections of cubes and related problems,
{\it Geometric Aspects of Functional Analysis}, Lecture Notes in Math.
1376, 251--260, Springer, Berlin, 1989.

\bibitem{Ber} L. Berwald: Verallgemeinerung eines Mittelwertsatzes von J.
Favard f\"ur positive konkave Funktionen, {\it Acta Math.} {\bf 79}
(1947), 17--37.

\bibitem{BH93} U. Betke, M. Henk, Intrinsic volumes and lattice points
of crosspolytopes, {\it Monatsh. Math.} {\bf 115} (1-2) (1993),
27--33.

\bibitem{Cohn} D. L. Cohn, {\it Measure theory}, 2nd revised ed.,
Birkh\"auser/Springer, New York Heidelberg, 2013.

\bibitem{Co} A. Colesanti, Functional inequalities related to the
Rogers-Shephard inequality, {\it Mathematika} {\bf 53} (2006), 81--101.

\bibitem{CSY} A. Colesanti, E. Saor\'in G\'omez, J. Yepes Nicol\'as,
On a linear refinement of the Pr\'ekopa-Leindler inequality, {\it Canad.
J. Math.} {\bf 68} (4) (2016), 762--783.

\bibitem{G} R. J. Gardner, The Brunn-Minkowski inequality, {\it Bull. Amer.
Math. Soc.} {\bf 39} (3) (2002), 355--405.

\bibitem{Gr} P. M. Gruber, {\it Convex and Discrete Geometry},
Springer, Berlin Heidelberg, 2007.

\bibitem{Ha75} H. Hadwiger, Das Wills'sche Funktional, {\it Monatsh. Math.}
{\bf 79} (1975), 213--221.

\bibitem{Ha79} H. Hadwiger, Gitterpunktanzahl im Simplex und Wills'sche
Vermutung, {\it Math. Ann.} {\bf 239} (3) (1979), 271--288.

\bibitem{HCY13} M. A. Hern\'andez Cifre, J. Yepes Nicol\'as, On the
roots of the Wills functional, {\it J. Math. Anal. Appl.} {\bf
401} (2013), 733--742.

\bibitem{HCY15} M. A. Hern\'andez Cifre, J. Yepes Nicol\'as, On the
roots of generalized Wills $\mu$-polynomials, {\it Rev. Mat.
Iberoamericana} {\bf 31} (2) (2015), 477--496.

\bibitem{K} J. Kampf, On weighted parallel volumes, {\it Beitr\"age Algebra
Geom.} {\bf 50} (2) (2009), 495--519.

\bibitem{KM} B. Klartag, V. D. Milman, Geometry of log-concave functions
and measures, {\it Geom. Dedicata} {\bf 112} (2005), 169--182.

\bibitem{Mc75} P. McMullen, Non-linear angle-sum relations for
polyhedral cones and polytopes, {\it Math. Proc. Cambridge Philos.
Soc.} {\bf 78} (1975), 247--261.

\bibitem{Mc91} P. McMullen, Inequalities between intrinsic volumes, {\it
Monatsh. Math.} {\bf 111} (1) (1991), 47--53.

\bibitem{Mi} V. D. Milman, In\'egalit\'e de Brunn-Minkowski inverse et
applications \`a la th\'eorie locale des espaces norm\'es (An inverse form
of the Brunn-Minkowski inequality, with applications to the local theory
of normed spaces), {\it C. R. Acad. Sci. Paris Ser. I Math.} {\bf 302} (1)
(1986), 25--28.

\bibitem{SY93} J. R. Sangwine-Yager, Mixed volumes. In: {\it Handbook of
Convex Geometry} (P. M. Gruber and J. M. Wills eds.),
North-Holland, Amsterdam, 1993, 43--71.

\bibitem{Sch} R. Schneider, {\it Convex bodies: The Brunn-Minkowski
theory}, 2nd expanded ed. Encyclopedia of Mathematics and its Applications
151, Cambridge, Cambridge University Press, 2014.

\bibitem{Vi96} R. A. Vitale, The Wills functional and Gaussian processes,
{\it Ann. Probab.} {\bf 24} (4) (1996), 2172--2178.

\bibitem{Vi99} R. A. Vitale, A log-concavity proof for a Gaussian
exponential bound, {\it Contemp. Math.} {\bf 239} (1999),
209--212.

\bibitem{Vi01} R. A. Vitale, Intrinsic volumes and Gaussian processes,
{\it Adv. Appl. Prob. (SGSA)} {\bf 33} (2001), 354--364.

\bibitem{Vi08} R. A. Vitale, Y. Wang, The Wills functional for Poisson
processes, {\it Statist. Probab. Lett.} {\bf 78} (14) (2008), 2181--2187.

\bibitem{Vi19} R. A. Vitale, On an exponential functional for
Gaussian processes and its geometric foundations, {\it J. Math.
Sci.} {\bf 238} (4) (2019), 406--414.

\bibitem{W73} J. M. Wills, Zur Gitterpunktanzahl konvexer Mengen, {\it
Elem. Math.} {\bf 28} (1973), 57--63.

\bibitem{W75} J. M. Wills, Nullstellenverteilung zweier konvexgeometrischer
Polynome, {\it Beitr\"age Algebra Geom.} {\bf 29} (1989), 51--59.

\bibitem{W90} J. M. Wills, Minkowski's successive minima and the zeros of a
convexity-function, {\it Monatsh. Math.} {\bf 109} (2) (1990), 157--164.

\bibitem{Z} G. Zhang, The affine Sobolev inequality, {\it J. Differ. Geom.}
{\bf 53} (1) (1999), 183--202.

\end{thebibliography}
\end{document}